\begin{document}

\setlength{\unitlength}{1 cm} 

\pagenumbering{arabic}

\newtheorem{teo}{Theorem}[section]
\newtheorem{mydef}[teo]{Definition}
\newtheorem{ex}[teo]{Example}
\newtheorem{prop}[teo]{Proposition}
\newtheorem{lem}[teo]{Lemma}
\newtheorem{corol}[teo]{Corollary}
\newtheorem{case}{Case}
\newtheorem{obs}[teo]{Remark}
\newtheorem{conj}[teo]{Conjecture}
\newtheorem{notat}[teo]{Notation}
\newtheorem{thmx}{Theorem}
\renewcommand{\thethmx}{\Alph{thmx}}

\vspace{0.5 cm}

\title{Poly-freeness in large even Artin groups}
\author[R. Blasco]{Rub\'en Blasco-Garc{\'\i}a}
\address{Departamento de Matem\'aticas, IUMA\\ 
	Universidad de Zaragoza\\ 
	C.~Pedro Cerbuna 12\\ 
	50009 Zaragoza, Spain} 
\email{rubenb@unizar.es}

\maketitle

\begin{abstract}
	We prove that any large even Artin group is poly-free and that any even Artin group based on a triangle graph is also poly-free.
\end{abstract}

{Primary: 20F36}

\section{Introduction}

The main objective of this paper is to prove that large even Artin groups are poly-free. 

Artin groups (also called Artin-Tits groups or generalized braid groups) form one of the families of groups where the use of both algebraic and geometric techniques has been most successful. Artin groups can be defined in the following way. Let $\Gamma$ be a simple labeled graph (a graph with no multiple edges between any pair of vertices and no edge from one vertex to itself) with a finite set of vertices $V$ and a set of edges $E$ such that for every edge $e\in E$ there is a label $2\leq m_e\in\mathbb{Z}$. The \textbf{Artin group} associated to $\Gamma$, $A_{\Gamma}$, is the group with the following presentation:
\begin{equation}
\label{eq-Artingroup}
A_{\Gamma}=\langle v; v\in V\mid {}_{m_e}(uv)=
{}_{m_e}(vu), e=\{u,v\}\in E \rangle.
\end{equation}
where $_{m_e}(uv)$ denotes the alternating product $uvu \cdots$ of length $m_e$.

There are few results known to be true for the whole family of Artin groups and usually one considers more or less general subfamilies. One of the most studied subfamilies is the family of \textbf{right-angled Artin groups} which are those Artin groups that satisfy that $m_e=2$ for every $e\in E$. For an introduction to the family of right-angled Artin groups see \cite{Charney3}.

An Artin group is called \textbf{even} if $m_e$ is an even number for all $e\in E$. There are not many results in the literature about even Artin groups (but see \cite{Blasco1}, \cite{Blasco2} and \cite{Blasco3}). However, we think that this family deserves more attention since they have remarkable properties some of them shared with the familiy of right-angled Artin groups. For example, even Artin groups retract onto any parabolic subgroup. In this paper we will take advantage of that and also of the fact that Artin relations $ {}_{m_e}(uv)=
{}_{m_e}(vu)$ of even type, i.e. when $m_e$ is an even number, can be rewritten in terms of conjugation (see section \ref{S3}).

An Artin group is said to be \textbf{large} if $m_e\geq 3$ for every $e\in E$. This family has nice properties, for example the word problem is solvable for large Artin groups. Holt and Rees described a set of normal forms for large Artin groups \cite{Holt}, these normal forms will play a key role in this paper.

A group $G$ is said to be \textbf{poly-free} if there exists a tower of normal subgroups

\begin{center}
	$1=G_{0}\unlhd G_{1} \unlhd ... \unlhd G_{N}=G $
	\end{center}
such that every quotient $G_{i+1}/G_{i}$ is free. This property implies other interesting algebraic properties. For example, any poly-free group is locally indicable (i.e. every non-trivial finitely generated subgroup admits an epimorphism onto $\mathbb{Z}$), and locally indicable groups are right-orderable (i.e. admit a total order which is invariant under right multiplication), see Rhemtulla-Rolfsen \cite{Rhemtulla}.
	
	Some families of Artin groups are known to be poly-free. For example, right-angled Artin groups are poly-free: this fact was independently proved by Duchamp and Krob \cite{Duchamp}, Howie \cite{Howie} and Hermiller and {\v{S}}uni{\'c} \cite{Hermiller}. Mart\'inez-P\'erez, Paris and the author proved that even Artin groups of type FC are also poly-free \cite{Blasco1}. In this paper, we add to the list of Artin groups known to be poly-free the family of large even Artin groups. Our main theorems:
	
	\begin{thmx}\label{TA}
	Any large even Artin group is poly-free.
	\end{thmx}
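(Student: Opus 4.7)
The plan is to induct on the number of vertices $n = |V|$ of $\Gamma$. The base case $n = 1$ gives $A_\Gamma \cong \mathbb{Z}$, which is free and hence poly-free; the two-generator case $n = 2$ (a dihedral Artin group with a single even relation) can also be settled directly by a Reidemeister–Schreier computation showing that killing one generator has free kernel.

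For the inductive step I would fix a vertex $v \in V$ and consider the retraction $\pi_v \colon A_\Gamma \to A_{\Gamma \setminus v}$ sending $v \mapsto 1$ and fixing the other generators. This is well defined precisely because every $m_e$ is even: any relation $(uv)^{m_e/2} = (vu)^{m_e/2}$ incident to $v$ collapses to the tautology $u^{m_e/2} = u^{m_e/2}$ upon setting $v = 1$, while the other defining relations are left untouched. Writing $K_v := \ker \pi_v$, one obtains
\[
1 \longrightarrow K_v \longrightarrow A_\Gamma \xrightarrow{\pi_v} A_{\Gamma \setminus v} \longrightarrow 1.
\]
Since $\Gamma \setminus v$ is still the defining graph of a large even Artin group on fewer vertices, the inductive hypothesis provides a poly-free tower $1 = G_0 \lhd G_1 \lhd \cdots \lhd G_N = A_{\Gamma \setminus v}$ with free successive quotients. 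Pulling this back through $\pi_v$ and prepending $1 \lhd K_v$ produces the subnormal chain
\[
1 \lhd K_v \lhd \pi_v^{-1}(G_1) \lhd \cdots \lhd \pi_v^{-1}(G_N) = A_\Gamma,
\]
whose quotients above $K_v$ are isomorphic to $G_{i+1}/G_i$ and hence free. Thus Theorem~A reduces to showing that $K_v$ itself is free.

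The core of the argument, and the main obstacle, is precisely this freeness of $K_v$. My plan uses the two tools highlighted in the introduction. First, as recalled in Section~\ref{S3}, each even Artin relation can be rewritten as a conjugation identity; combining this with a Reidemeister–Schreier rewriting that uses a Schreier transversal coming from $A_{\Gamma \setminus v}$ yields an explicit presentation of $K_v$ whose generators are the conjugates $v_w := w v w^{-1}$ (with $w$ running over the transversal) and whose relators express certain $v_w$ as words in the others. Second, the Holt–Rees normal forms for large Artin groups give a canonical representative for every element of $A_\Gamma$, and, crucially, combinatorial control over where the letter $v^{\pm 1}$ may appear inside a reduced expression.

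Using these two inputs, I would single out a subfamily $\{v_{w_\alpha}\}$ of the $v_w$'s and establish: (i) every other $v_w$ is expressible in terms of the $v_{w_\alpha}$ via iterated use of the conjugation-type relators from Section~\ref{S3}, and (ii) no non-trivial relation holds among the $v_{w_\alpha}$, because any such relation would lift to an equality of distinct Holt–Rees normal forms in $A_\Gamma$. Together these show $K_v$ is free on $\{v_{w_\alpha}\}$, closing the induction. The hard part is step (ii): certifying, by a careful combinatorial analysis of how the rewritten even relations interact with Holt–Rees reduction, that the chosen generators of $K_v$ satisfy no hidden relator beyond those already used to eliminate the other $v_w$'s.
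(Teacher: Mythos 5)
Your reduction is exactly the paper's: induct on $|V(\Gamma)|$, use the retraction $\pi_v$ (well defined because all labels are even), and reduce Theorem~A to the freeness of $K_v=\ker\pi_v$; the subnormal tower you build from the pulled-back series is precisely how the paper's Theorem~\ref{Mainthm} follows from Lemma~\ref{action} and Proposition~\ref{freeness}. So the architecture is correct and matches the paper.

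However, the entire substance of the paper is the step you defer to ``a careful combinatorial analysis,'' and as written your proposal does not contain a proof of it. Concretely, what is missing is a mechanism that pairs each relator of the kernel presentation with a generator it eliminates, in an order that terminates and never eliminates the same generator twice. The paper does this by rewriting each even relation $(rb_i)^{k_i}=(b_ir)^{k_i}$ so that $r^{b_i^{p_i^+}g}$ (resp.\ $r^{b_i^{n_i^-}g}$) is a conjugate of $r^{b_i^{p_i^-}g}$ (resp.\ $r^{b_i^{n_i^+}g}$), introducing the ``shift'' maps $\rho_i^{\pm}$, and then proving via the Holt--Rees machinery a list of quantitative facts about geodesics in the \emph{smaller} group $A_{\Gamma\setminus\{v\}}$: an element has geodesic representatives starting with at most two distinct letters (Lemma~\ref{Initials}); a complete description, with sharp length bounds, of elements admitting geodesic representatives beginning with $b_i^{p_i^{\pm}}$ and $b_j^{p_j^{\pm}}$ simultaneously (Lemmas~\ref{Lema 7}--\ref{Lema 9}); and strict decrease of length or shortlex order under iterated application of $\rho$ (Lemmas~\ref{obs4}, \ref{LexCond}, \ref{Proposition 12}, Corollary~\ref{Corol 13}). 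Only with these in hand can one run the shortlex induction of Proposition~\ref{freeness} showing each relator kills a fresh generator and that a nonempty free basis survives. Your step~(ii) as stated --- ``any relation would lift to an equality of distinct normal forms'' --- is not by itself an argument: the relators genuinely do impose identifications among the conjugates $v_w$, and the point is to show these identifications are exactly a tree's worth, which requires the geodesic analysis above. Also note the paper needs one extra input you omit: elements of $A_{\Gamma\setminus\{v\}}$ commuting with $v$ must be trivial (Lemma~\ref{Commute}), to discard the stabilizer relations arising in the semidirect-product presentation.
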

	
	\begin{thmx}\label{TB}
		Any even Artin group associated to a triangle graph is poly-free.
	\end{thmx}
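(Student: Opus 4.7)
The plan is to split the argument according to the number $n$ of edges in $\Gamma$ whose label equals $2$.

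If $n=0$, every label is at least $4$, so $A_{\Gamma}$ is large and Theorem~\ref{TA} applies directly. If $n=3$, $A_{\Gamma}$ is the right-angled Artin group on the triangle, i.e.\ $\mathbb{Z}^{3}$, which is poly-free. If $n=2$, the vertex $b$ incident to both label-$2$ edges commutes with the other two generators, hence is central, and $A_{\Gamma}\cong\mathbb{Z}\times A_{\Gamma\setminus b}$; the second factor is a two-generator dihedral even Artin group $A(I_{2}(2r))$, which a direct Reidemeister--Schreier computation shows to be $F_{r}\rtimes\mathbb{Z}$ with $F_{r}$ finitely generated free, hence poly-free; so is $A_{\Gamma}$.

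The substantive case is $n=1$. Without loss the label-$2$ edge is $\{a,b\}$, so
\[
A_{\Gamma}=\langle a,b,c\mid ab=ba,\ (ac)^{p}=(ca)^{p},\ (bc)^{q}=(cb)^{q}\rangle,\quad p,q\geq 2.
\]
I would use the retraction $\pi\colon A_{\Gamma}\to A_{\{a,c\}}$ sending $b\mapsto 1$. Its image is a dihedral even Artin group, already known to be poly-free of length~$2$, so it suffices to show that $K:=\ker\pi$ is poly-free.

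By Reidemeister--Schreier, $K$ is generated by the conjugates $b_{g}:=gbg^{-1}$ for $g\in A_{\{a,c\}}$, subject to relations obtained by conjugating the two $b$-involving relators of $A_{\Gamma}$ by every $g$. The commutation $[a,b]=1$ yields the identifications $b_{ga}=b_{g}$, so the generators depend only on the left coset $g\langle a\rangle$. The relation $(bc)^{q}=(cb)^{q}$, rewritten using the conjugation form of even Artin relations (cf.\ Section~\ref{S3}) and then conjugated by $g$, produces the dihedral-type elimination
\[
b_{gc^{q}}=\bigl(b_{gc}\,b_{gc^{2}}\cdots b_{gc^{q-1}}\bigr)^{-1}\,b_{g}\,\bigl(b_{gc}\cdots b_{gc^{q-1}}\bigr).
\]

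I would finish by exploiting the decomposition $A_{\{a,c\}}\cong F_{p}\rtimes\langle c\rangle$ to choose canonical representatives for the cosets $g\langle a\rangle$, and then applying Tietze transformations that first eliminate $b_{g}$ for $g$ whose $c$-exponent lies outside $\{0,1,\dots,q-1\}$ and subsequently identify the surviving generators modulo the $\langle a\rangle$-equivalence. The main obstacle is the verification of consistency: a single $b_{g}$ can be rewritten along two different sequences of Tietze moves, and both must yield the same word in the spanning generators. I expect this consistency check to follow by induction on the Holt--Rees normal-form length of $g\in A_{\{a,c\}}$, in parallel with the combinatorial analysis used in Theorem~\ref{TA}. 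Once $K$ is exhibited as poly-free, the split extension $1\to K\to A_{\Gamma}\to A_{\{a,c\}}\to 1$ makes $A_{\Gamma}$ poly-free.
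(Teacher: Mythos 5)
Your overall strategy coincides with the paper's: the same four-way case split by the number of label-$2$ edges, the same disposal of the cases with zero, two or three such edges, and for the remaining case the same key move of killing a vertex of the label-$2$ edge so that the retraction lands in a \emph{large} dihedral even Artin group (the paper's $\psi\colon A_\Gamma\to A_{\Gamma\setminus\{r\}}$ with $r$ adjacent to $b_2$ via the label-$2$ edge), together with the same presentation of the kernel by conjugates of the killed generator subject to conjugated Artin relations. The easy cases and the rewriting of the even relation $(bc)^q=(cb)^q$ into the dihedral-type elimination are correct.

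The genuine gap is that the one substantive claim --- that $K=\ker\pi$ is poly-free (the paper proves it is actually free) --- is not established; you defer it to ``a consistency check I expect to follow by induction on the Holt--Rees normal-form length.'' That consistency check is not a routine verification: it is essentially the entire technical content of the paper. Concretely, to eliminate generators $b_g$ by Tietze moves one must control exactly which elements $g$ of the large dihedral quotient admit geodesic representatives beginning with prescribed powers $b_i^{p_i^+}$ or $b_i^{n_i^-}$ of two different generators simultaneously; this requires Lemma \ref{Lema 1}, Lemmas \ref{Lema 5}--\ref{Lema 9} and Lemma \ref{Initials} (ruling out, e.g., coexisting geodesics $a^sw_1=_Gb^{-t}w_2$ with $s,t\ge 2$, and pinning down the unique minimal-length elements in $\Omega_i^{\pm}\cap\Omega_j^{\pm}$), and then the machinery of the maps $\rho_i^{\pm}$, $\rho$, $\delta$, the free subgroup $H$, and the descending filtration $\Lambda=\Lambda_0\supset\Lambda_1\supset\cdots$ of Proposition \ref{freeness}, which is what guarantees that each relation eliminates a \emph{distinct} generator and that a nonempty generating set survives. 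None of this is present or replaced by an alternative argument in your sketch. A second, smaller omission: because $a$ centralizes $b$, the stabilizer of $b$ in the quotient is nontrivial, so the semidirect-product presentation acquires relations $[\mathrm{Stab}(b),b]$ that must be shown to be consequences of the others; the paper flags that its Lemma \ref{Commute} fails here and supplies the separate argument of Lemma \ref{action2}. Your observation $b_{ga}=b_g$ points in the right direction but does not by itself discharge this, nor does it address how the subsequent combinatorics must be carried out modulo the resulting $\langle a\rangle$-equivalence on the index set. As written, the proposal is a correct plan that reproduces the paper's architecture, but the proof of its central claim is missing.
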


	In section \ref{S2} we review some results by Holt and Rees about normal forms in large Artin groups. Section \ref{Section3} is rather technical: we use the normal forms of section \ref{S2} together with other results from \cite{Holt} to gain information about geodesic words in large even Artin groups. 
	
	In section \ref{S3} we will see how to split any large even Artin group as a semidirect product of a parabolic subgroup and a particular normal subgroup. Later on we will show that this normal subgroup is free and this semidirect product decomposition will be crucial to argue by induction and deduce our main result. Finally, in section \ref{sec} we will prove Theorem \ref{TA}  and in section \ref{S6} we will prove Theorem \ref{TB}

	\subsection*{Acknowledgements} {The author would like to thank Conchita Mart\'inez-P\'erez for her comments and suggestions.
		
	The author was partially supported by a Departamento de Industria e Innovaci\'on del Gobierno de Arag\'on and Fondo Social Europeo PhD grant, the Spanish Government MTM2015-67781-P (MINECO/FEDER) and MTM2016-76868-C2-2-P and Grupo Algebra y Geometr\'ia from Gobierno de Arag\'on.}

\section{Normal forms in Artin groups}\label{S2}

In this section we will recall some definitions and results about normal forms which can be found in \cite{Holt}, \cite{Brien}, \cite{Holt2} applying them to the particular case of even Artin groups.

\begin{mydef}
	
	We define an \textbf{alphabet} to be a finite set $L$. An element $a\in L$ is called a \textbf{letter}. A \textbf{word} over $L$ is a finite sequence of letters.
	
	Formally, a word can be defined as a map $w : \{1,...,n\} \rightarrow L$ where $w (i)$ is the i-th letter of the word. The \textbf{length} of a word $w$ is the integer $n$ and it is denoted by $|w|$.

\end{mydef}

When $n=0$, we say that $w$ is the \textbf{empty word} over $L$, and it is denoted by $\epsilon$. We denote by $L^{*}$ the set of all words over the alphabet $L$. Given a word $w=abc$, with possibly empty $a,b,c\in L^{*}$, the word $a$ is said to be a \textbf{prefix} of $w$, $c$ is a \textbf{suffix} of $w$ and $b$ is a \textbf{subword} of $w$. Given a word $w$ we denote by $f[w]$ and $l[w]$ the first and last letter of $w$ respectively. So, if $|w|=n$, $f[w]=w(1)$ and $l[w]=w(n)$.

From now on we fix $L=S\cup S^{-1}$, where $S$ is a generating set of the group $G$. A letter $a\in L$ is \textbf{positive} if $a\in S$ and is \textbf{negative} otherwise. The \textbf{name} of a letter is its positive form.

If two words $w, v$ represent the same element in a group $G$, we will write $w =_{G} v$.

\begin{mydef}
	
	We say that a word $w\in L^{*}$ is \textbf{positive} if all its letters are positive, \textbf{negative} if all its letters are negative and \textbf{unsigned} otherwise.
	
\end{mydef}

\begin{mydef}
	
	A word $w\in L^{*}$ is \textbf{freely reduced} if it does not admit any subword of the form $aa^{-1}$ or $a^{-1}a$ for any letter $a$. We say that a not freely reduced word \textbf{admits a free reduction}.
	
	A word $w\in L^{*}$ is \textbf{geodesic} if for any other word $v$ such that $w =_{G} v$, we have that $|w|\le |v|$.
	
\end{mydef}

\begin{mydef}
	
	Let $L$ be an alphabet. Given $<_{lex}$ an arbitrary lexicographic ordering on $L$ (which induces a lexicographic order, that we denote $<_{\text{lex}}$, on $L^*$), the \textbf{shortlex} ordering $<_{slex}$ on $L^{*}$ is defined by
	
	\begin{center}
		$w< _{slex} v \text{ if and only if } |w|<|v|  \text{ or } |w|=|v| \text{ and } w<_{lex} v$.
	\end{center}
	
\end{mydef}

\begin{mydef}
	
	A word $w$ is said to be a \textbf{shortlex minimal representative} if for every word $v\neq w$ such that $w=_{G} v$, $w<_{slex} v$.
	
\end{mydef}

\begin{obs}
	
	Every shortlex minimal representative is geodesic.
	
\end{obs}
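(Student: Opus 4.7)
The statement is essentially an unpacking of the two definitions, so the plan is short. I would argue directly from the definitions of \emph{shortlex minimal representative} and \emph{geodesic}, showing that the former condition is formally stronger than the latter.

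The plan is as follows. Fix a shortlex minimal representative $w \in L^*$, and let $v \in L^*$ be any word with $w =_G v$. I want to show $|w| \le |v|$. If $v = w$, then $|w| = |v|$ and we are done. Otherwise $v \ne w$, and the hypothesis that $w$ is shortlex minimal gives $w <_{slex} v$. By the definition of the shortlex ordering, this means either $|w| < |v|$, in which case we are immediately done, or $|w| = |v|$ and $w <_{lex} v$, in which case $|w| \le |v|$ holds trivially. In both subcases $|w| \le |v|$, so $w$ is geodesic.

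There is really no obstacle here; the only thing to be careful about is the edge case $v = w$, which is excluded from the definition of shortlex minimality but must be handled separately in the definition of geodesic (where one only asks $|w| \le |v|$ for \emph{any} $v$ representing the same element, including $v = w$ itself). Once that case is dispatched, the implication follows from the first clause of the definition of $<_{slex}$, which says that $w <_{slex} v$ forces $|w| \le |v|$.
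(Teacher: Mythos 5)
Your argument is correct and is exactly the definitional unpacking that the paper leaves implicit (the statement appears only as a remark with no written proof). Handling the case $v=w$ separately and then reading off $|w|\le|v|$ from the first clause of the shortlex order is all that is needed.
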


\subsection{Dihedral Artin groups}

\begin{mydef}
	
	The \textbf{dihedral Artin group} $A_{2}(m)$, $m\in\mathbb{Z}^+\cup \{+\infty\}$ is the Artin group based on the graph consisting of two vertices joined by an edge labeled with $m$ or two disconnected vertices if $m=\infty$. If $m<\infty$ this is the group with presentation $\langle a,b \mid {}_{m}(a,b)={}_{m}(b,a)\rangle$.
	
\end{mydef}

 We want to study how to obtain a shortlex representative of a given word in $A_2(m)$ for $m$ arbitrary. If $m=\infty$, $A_{2}(\infty)$ is the free group on two variables and it is easy to see that every freely reduced word $w\in A_{2}(\infty)$ is shortlex minimal. So we only need to consider the cases where $m<\infty$.

We already defined $_{m}(a,b)$ (on the first page). Similarly, we define $(a,b)_{m}$ as the alternating procuct of $a$ and $b$ of length $m$ ending with $b$. Notice that if $m$ is even, we have $_{m}(a,b)=(a,b)_{m}$ and we can use the two expressions interchangeably.

\begin{mydef}
	
	Let $w$ be a freely reduced word in $A_{2}(m)$ over the alphabet $L=\{a, a^{-1}, b, b^{-1}\}$. Consider the integers:
	
	\begin{center}
		$r_{1}=\max\{r\mid$ $_{r}(a,b) \text{ or } _{r}(b,a) \text{ is a subword of } w \},$
		
	\end{center}
	
	\begin{center}
		
		$r_{2}=\max\{r\mid$ $_{r}(a^{-1},b ^{-1}) \text{ or } _{r}(b^{-1},a^{-1}) \text{ is a subword of } w \},$
	\end{center}
	
	$p(w)=\min\{r_{1},m\}$ and $n(w)=\min\{r_{2},m\}$.
	
\end{mydef}

Geodesic words $w$ in $A_{2}(m)$ are characterized by the values $p(w)$ and $n(w)$.

\begin{prop}\cite{Mairesse} Let $g\in A_{2}(m)$ and let $w\in L^{*}$ be a freely reduced word representing $g$.
	
	\begin{enumerate}
		\item If $p(w)+n(w)<m$, then $w$ is the unique geodesic representative for $g$.
		\item If $p(w)+n(w)=m$, then $w$ is one of the geodesic representatives for $g$.
		\item If $p(w)+n(w)>m$, then $w$ is not geodesic. Furthermore, $w$ has a subword $w'$ such that $p(w')+n(w')=m$.
		
	\end{enumerate}
\end{prop}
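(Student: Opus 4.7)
The plan is to exploit the fact that the only nontrivial defining relation in $A_{2}(m)$ is the braid relation ${}_m(a,b) = {}_m(b,a)$, together with its negative counterpart ${}_m(a^{-1},b^{-1}) = {}_m(b^{-1},a^{-1})$, and to analyze precisely when such a relation combined with free reduction strictly shortens a freely reduced word. The first step is to establish a local reduction lemma: a freely reduced word containing a subword of the form ${}_m(x,y) \cdot x^{-1}$, or dually $y^{-1} \cdot {}_m(x,y)$, or analogous patterns with all letters negated, can be shortened by exactly $2$ via a single braid move followed by the resulting free cancellation. Any such pattern has $p + n \geq m + 1$ when considered in isolation, so these are the only primitive shortening configurations.

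For part $(3)$, suppose $p(w) + n(w) > m$ with $w$ freely reduced. I would argue by case analysis on the arrangement of the maximal alternating positive and negative subwords of $w$ that there must exist a contiguous subword $w'$ of $w$ whose positive and negative alternation lengths sum exactly to $m$. When the maximal positive block $P$ of length $p(w)$ and the maximal negative block $N$ of length $n(w)$ are adjacent in $w$, a suitable truncation of $P\cdot N$ (or $N\cdot P$) works directly. When they are separated by intervening letters, the maximality of $p(w)$ and $n(w)$ combined with free reducedness constrain those intervening letters, and one produces $w'$ by combining a piece of one maximal block with a shorter alternation from the other side. Once $w'$ is located, the fact that $w'$ sits inside $w$ with additional letters pushing $p + n$ past $m$ lets a braid move on $w'$ interact with letters of $w$ outside $w'$, producing a free reduction and thus a strictly shorter representative.

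For part $(2)$, assume $p(w) + n(w) \leq m$. I would show $w$ is geodesic by contradiction: any shorter equivalent word connects to $w$ through a sequence of braid moves and free reductions, and the first strict length decrease must be a local reduction of the form above; however, such patterns require $p + n \geq m + 1$, an obstruction that has to persist through any preceding length-preserving braid moves. For part $(1)$, the hypothesis $p(w) + n(w) < m$ forces both $p(w) < m$ and $n(w) < m$, so $w$ contains no alternating subword of length $m$ of either sign; no braid move is applicable to $w$ at all, and $w$ is its own unique freely reduced representative, hence the unique geodesic.

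The main obstacle will be part $(3)$ when the maximal positive and negative alternating blocks are not adjacent in $w$: locating the subword $w'$ with $p(w') + n(w') = m$ requires careful combinatorial bookkeeping on the intervening letters, and verifying that a braid move on $w'$ actually produces a global reduction in $w$ (rather than just a length-preserving rewrite) needs the reduction to be realized through the ``subword $w'$ plus adjacent letter'' interface. A secondary difficulty is the invariance tracking in part $(2)$, namely ruling out that some sequence of length-preserving braid moves applied to $w$ produces an intermediate word with $p + n > m$ that then admits a local reduction.
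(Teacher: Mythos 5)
The paper offers no proof of this proposition: it is quoted verbatim from Mairesse(--Math\'eus) \cite{Mairesse}, so there is nothing internal to compare your argument against; I can only assess the proposal on its own terms, and it has a genuine gap. Your entire strategy rests on the unproved assumption that any passage between two freely reduced words representing the same element of $A_2(m)$ can be realized by a sequence of moves each of which is either a free reduction or a braid move applied to an \emph{already present} alternating subword, and that ``the first strict length decrease must be a local reduction of the form above.'' In a group given by a presentation, equality of words only guarantees connectivity via insertions and deletions of relators and of inverse pairs, and such a path may pass through arbitrarily long intermediate words; ruling out that a detour through longer words produces a shorter representative (or, for part (1), a distinct representative of the same length) is precisely a confluence/peak-reduction statement, and it is the actual content of the proposition and of the normal-form machinery in Section 2 (Theorem \ref{NF}, Theorem \ref{NF2}). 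This is not the ``secondary difficulty'' you relegate to the final paragraph: it is the central one, and it undermines all three parts. In particular, the step ``$p(w)+n(w)<m$ implies no braid move is applicable to $w$, hence $w$ is its own unique freely reduced representative, hence the unique geodesic'' is a non sequitur as written --- absence of a critical subword in $w$ says nothing, by itself, about what other freely reduced words can represent the same element after a longer rewriting detour. A correct proof must either establish completeness of the local rewriting system (as Holt--Rees do via the $\tau$-moves and critical sequences) or work with an independent normal form such as the Garside structure on $A_2(m)$ (as Mairesse--Math\'eus do).

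Two smaller points. For the ``furthermore'' clause of (3), your case analysis on whether the maximal positive and negative alternating blocks are adjacent is unnecessarily delicate: deleting one letter from an end of a word decreases each of $p$ and $n$ by at most one, and the deleted letter has a single sign, so $p+n$ drops by at most one per deletion; a discrete intermediate-value argument on nested prefixes/suffixes of $w$ then yields a subword $w'$ with $p(w')+n(w')=m$ with no bookkeeping at all. Second, your ``local reduction lemma'' shortens a word by $2$ via a braid move plus one cancellation, but establishing that $p+n>m$ always \emph{positions} a critical subword so that the surplus letter actually cancels (rather than merely witnessing a large alternation somewhere else in the word) again requires the overlap analysis of Definition \ref{overlap} and Definition \ref{critical seq}; it does not follow from the existence of the two maximal blocks alone.
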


Let $w$ be a shortlex minimal word representing $g$. Then since $w$ is geodesic, $p(w)+n(w)\leq m$. Moreover, if $p(w)+n(w)<m$, then $w$ is the unique geodesic word for $g$ and such word must be the shortlex minimal representative of $g$.

\begin{mydef}\label{critical words}\cite{Holt}
	Let $w$ be a freely reduced word in $A_{2}(m)$. Let $\{x,y\}=\{z,t\}=\{a,b\}$ and put $p=p(w)$ and $n=n(w)$. The word $w$ is called a \textbf{critical word} if $p+n=m$ and it has one of the following forms. In these forms, $\xi_+$ represents some positive word in $L^{*}$, $\xi_-$ some negative word in $L^{*}$ and $\eta$ some word in $L^{*}$.
	
	If $w$ is a positive word, then
	
	\begin{center}
		$w=\xi_+(x,y)_{m} \text{ or } w=$ $_{m}(x,y)\xi_+$,
	\end{center}
	where $w$ has exactly one alternating positive subword of length $m$.
	
	If $w$ is a negative word, then
	
	\begin{center}
		$w=\xi_-(x^{-1},y^{-1})_{m} \text{ or } w=$ $_{m}(x^{-1},y^{-1})\xi_-$,
	\end{center}
	where $w$ has exactly one alternating negative subword of length $m$.
	
	If $w$ is an unsigned word, then
	
	\begin{center}
		$w=$ $_{p}(x,y)\eta(z^{-1},t^{-1})_{n} \text{ or } w=$ $_{n}(x^{-1},y^{-1})\eta(z,t)_{p}$.
	\end{center}
	
	We denote by $T$ the set of all critical words.
	
\end{mydef}

Let us consider the Garside element $\Delta:= (a,b)_m$. Notice that $\Delta$ is central if $m$ is even but $a^{\Delta}=b$ if $m$ is odd. We define the automorphism $\nu$ of $L^*$ such that $\nu(w)=w^{\Delta}$. Notice that for the case of even Artin groups $\nu=Id_{L*}$.

\begin{mydef}\label{taumoves}
	
	Define a map $\tau$ on the critical words as follows:
\begin{align*}
		(x,y)_{m} &\mapsto\hspace{0.1cm} (y,x)_{m}, \\
		\xi_+ (x,y)_{m} &\mapsto\hspace{0.1cm} _{m}(t,z)\nu(\xi_+), \text{ where } z=f[\xi_+],  \\
		_{m}(x,y)\xi_+ &\mapsto\hspace{0.1cm} \nu(\xi_+)(z,t)_{m}, \text{ where } z=l[\xi_+], \\
		(x^{-1},y^{-1})_{m} &\mapsto\hspace{0.1cm} (y^{-1},x^{-1})_{m}, \\
		\xi_- (x^{-1},y^{-1})_{m} &\mapsto\hspace{0.1cm} _{m}(t^{-1},z^{-1})\nu(\xi_-), \text{ where } z=f[\xi_-]^{-1}, \\
		_{m}(x^{-1},y^{-1})\xi_- &\mapsto\hspace{0.1cm} \nu(\xi_-) (z^{-1},t^{-1})_{m}, \text{ where } z=l[\xi_-]^{-1}, \\
		_{p}(x,y)\eta(z^{-1},t^{-1})_{n} &\mapsto \hspace{0.1cm}_{n}(y^{-1},x^{-1})\nu(\eta)(t,z)_{p}, \text{ where } p+n=m, \hspace{.2 cm} n,p\neq 0, \\
		_{n}(x^{-1},y^{-1})\eta(z,t)_{p} &\mapsto \hspace{0.1cm}_{p}(y,x)\nu(\eta)(t^{-1},z^{-1})_{n}, \text{ where } p+n=m, \hspace{.2 cm} n,p\neq 0. 
		\end{align*}

	Here $\{x,y\}=\{z,t\}$, $\xi_+$ is a non-empty positive word, $\xi_-$ a non-empty negative word and $\eta$ can be empty.
	
	These are called $\tau$-moves.
	
\end{mydef}

\begin{prop}\label{cons} \cite{Holt}\cite{Brien} For any critical word $w$:
	
	\begin{enumerate}
		\item $\tau(w)$ is also critical, $\tau(w)=_{G} w$ and $\tau(\tau(w))=w$.
		
		\item $p(\tau(w))=p(w)$ and $n(\tau(w))=n(w)$.
		
		\item $f[w]$ and $f[\tau(w)]$ have different names, the same is true for $l[w]$ and $l[\tau(w)]$.
		
		\item $f[w]$ and $f[\tau(w)]$ have the same sign if $w$ is positive or negative, but different signs if $w$ is unsigned; the same is true for $l[w]$ and $l[\tau(w)]$.
	\end{enumerate}
	
\end{prop}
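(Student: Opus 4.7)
The plan is to verify the four claims by a direct case analysis over the eight $\tau$-moves listed in Definition \ref{taumoves}. The algebraic engine throughout is the defining braid relation $(a,b)_m = (b,a)_m = \Delta$ of $A_2(m)$ together with the conjugation identity
\[
w \cdot \Delta = \Delta \cdot \nu(w) \quad \text{for any } w \in L^*,
\]
which is automatic when $m$ is even (since $\nu = \mathrm{Id}$ and $\Delta$ is central) and follows from $a^{\Delta} = b$, $b^{\Delta} = a$ when $m$ is odd.

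For claim (1): the first and fourth rules are literally the braid relation applied to the only alternating block. For the four rules of the form $\xi_{\pm} (x,y)_m \mapsto {}_m(t,z)\,\nu(\xi_{\pm})$ and their symmetric companions, I would factor the alternating block as $\Delta$, push it through $\xi_{\pm}$ using the identity above, and rewrite $\Delta$ in the unique form ${}_m(t,z)$ forced by $\{x,y\}=\{z,t\}$ and the choice of $z = f[\xi_{\pm}]$ or $l[\xi_{\pm}]$. For the two unsigned rules, the equality $p+n=m$ lets me insert $\Delta\,\Delta^{-1}$ between $\eta$ and the positive (or negative) alternating block; pushing $\Delta$ through $\eta$ produces $\nu(\eta)$, after which the two resulting alternating blocks are rewritten using the braid relation in the appropriate positive or negative alphabet. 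The involution property $\tau(\tau(w)) = w$ is then formal: each rule swaps a pair of forms, and $\nu^2 = \mathrm{Id}$ together with the invariance of $\{x,y\}=\{z,t\}$ under the rule gives the identity back.

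For claim (2) and for showing that $\tau(w)$ is critical: $\tau(w)$ is visibly in one of the shapes of Definition \ref{critical words}, so it only remains to verify that the alternating blocks realize the same $p, n$ and that in the signed cases there is still a unique alternating $m$-block. The uniqueness hypothesis on $w$ pins down the letter of $\xi_{\pm}$ adjacent to the $m$-block, since otherwise a second alternating $m$-subword would appear. A short check splitting on the parity of $m$, and using that $\nu$ permutes the names $\{a,b\}$ exactly when $m$ is odd, shows that the corresponding letter of $\nu(\xi_{\pm})$ in $\tau(w)$ has the right name to prevent the creation of a second $m$-block. In the unsigned cases the boundary alternating blocks keep lengths $p$ and $n$ by construction, so both invariants are preserved.

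Claims (3) and (4) are then a bookkeeping check: for each of the eight rules the letters $f[w], f[\tau(w)], l[w], l[\tau(w)]$ can be read off directly and compared. For example, the second rule yields $f[w]=z$ and $f[\tau(w)]=t$, both positive and with different names, matching the signed clauses of (3) and (4); the seventh rule gives $f[w]=x$ (positive) and $f[\tau(w)]=y^{-1}$ (negative), matching the unsigned clauses. The main technical difficulty is the uniqueness part of the criticality check, where one must carefully track how $\nu$ acts on the boundary letters of $\xi_{\pm}$ depending on the parity of $m$; everything else reduces to transparent applications of the braid relation and direct symbol inspection.
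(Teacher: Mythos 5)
The paper offers no proof of this proposition at all: it is imported verbatim from the cited references of Holt--Rees and O'Brien, so there is nothing internal to compare your argument against. Your blind verification is nevertheless correct and is essentially the standard argument from those sources: the identity $w\Delta=\Delta\nu(w)$ with $\Delta=(a,b)_m$ handles the group equalities in claim (1), the unsigned cases follow by inserting $\Delta\Delta^{-1}$ using $p+n=m$, and claims (2)--(4) reduce to reading off boundary letters of the eight rules. You have also correctly isolated the one genuinely delicate point, namely that the choice $z=f[\xi_\pm]$ (resp.\ $z=l[\xi_\pm]$) together with the parity-dependent action of $\nu$ on names is exactly what prevents a second alternating $m$-block from forming at the junction between ${}_m(t,z)$ and $\nu(\xi_\pm)$, and what makes $\tau(\tau(w))=w$ hold as an equality of words rather than merely in $G$; carrying out that boundary check in both parities is all that remains to make the sketch a complete proof.
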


\begin{mydef}\label{critical red}
	
	Let $w$ be a word that admits a factorization $w=w_{1}w_{2}w_{3}$ where $w_{2}$ is a critical word. If $w_{1}\tau(w_{2})w_{3}$ admits a free reduction or if $w_{1}\tau(w_{2})w_{3}<_{lex}w_{1}w_{2}w_{3}$, we say that $w$ admits a \textbf{critical reduction}.
	
	We say that $w_{1}\tau(w_{2})w_{3}$ is a \textbf{right length reduction} (or that $w$ admits a right length reduction) if $l[\tau(w_{2})]=f[w_{3}]^{-1}$.
	
	We say that  $w_{1}\tau(w_{2})w_{3}$ is a \textbf{lex reduction} (or that $w$ admits a lex reduction) if $f[\tau(w_{2})]<_{lex}f[w_{2}]$ and  $w_{1}\tau(w_{2})w_{3}$ does not admit a free reduction. 
	
\end{mydef}

\begin{teo}\label{NF}\cite{Holt}\cite{Brien}
	Let $W$ be the set of all words that do not admit any right length reduction or left lex reduction. Then $W$ is the set of shortlex minimal representatives of elements of $A_{2}(m)$.
\end{teo}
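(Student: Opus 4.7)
The plan is to prove the two inclusions separately. For the forward direction, suppose $w$ is shortlex minimal; I would show $w\in W$. If $w$ admitted a right length reduction $w = w_1 w_2 w_3 \mapsto w_1 \tau(w_2) w_3$, then the condition $l[\tau(w_2)]=f[w_3]^{-1}$ produces a free reduction in the new word, yielding a strictly shorter representative of the same element and contradicting geodesicity. If instead $w$ admitted a lex reduction, then $w_1 \tau(w_2) w_3$ is freely reduced (by definition) and has the same length as $w$, since $\tau$ preserves length by inspection of Definition \ref{taumoves}; as $w_1 \tau(w_2) w_3$ agrees with $w$ on the first $|w_1|$ letters and then has the strictly smaller letter $f[\tau(w_2)] <_{lex} f[w_2]$, it is lex-smaller than $w$, contradicting shortlex minimality.

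For the reverse direction, suppose $w \in W$ and, aiming for a contradiction, let $v \neq w$ be the shortlex minimal representative of $g = [w]$. Then either $|v| < |w|$ or $|v| = |w|$ with $v <_{lex} w$. In the non-geodesic case the proposition characterising geodesics in $A_2(m)$ supplies a freely reduced subword of $w$ that is critical. I would take a factorisation $w = w_1 w_2 w_3$ with $w_2$ critical and maximal in length among such subwords. Inspecting the shapes listed in Definition \ref{critical words} together with parts (3)--(4) of Proposition \ref{cons}, one sees that the maximality of $w_2$ combined with the hypothesis $p(w)+n(w)>m$ forces a letter adjacent to $w_2$ in $w$ whose name coincides with the adjacent letter of $\tau(w_2)$ but with opposite sign. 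Hence $w_1 \tau(w_2) w_3$ admits a free reduction and $w$ admits a right length reduction, contradicting $w \in W$.

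In the equal-length case, both $w$ and $v$ are geodesic representatives of $g$. A standard fact for dihedral Artin groups (compare \cite{Mairesse}) is that any two geodesic representatives are connected by a finite chain of $\tau$-moves on critical subwords; along such a chain the length is constant, so no free reduction ever occurs. Walking from $w$ to $v$, some move in the chain must strictly decrease the current word lexicographically, and I would take the earliest such move. The main obstacle is to relocate this first lex-decreasing move so that it applies directly to $w$: any preceding $\tau$-moves are lex-neutral, and using the local nature of $\tau$-moves and the structural information in Proposition \ref{cons} one argues that they can be commuted past the lex-decreasing move without destroying its validity or introducing a free reduction. That move is then a lex reduction of $w$, contradicting $w\in W$. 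This commutation and re-ordering argument is the technical core of the theorem; the same analysis also yields the symmetric left-sided statement.
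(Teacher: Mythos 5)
First, a point of comparison: the paper does not prove this statement at all --- Theorem \ref{NF} is quoted from \cite{Holt} and \cite{Brien} --- so there is no internal proof to measure your attempt against; it has to stand on its own.

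On its own merits, your sketch follows the strategy of those references, and the forward inclusion (shortlex minimal implies membership in $W$) is essentially complete: a right length reduction would contradict geodesicity, and a lex reduction would contradict lexicographic minimality, using that $\tau$ preserves length. The reverse inclusion, however, has genuine gaps. In the non-geodesic case you must produce specifically a \emph{right} length reduction, i.e.\ a factorisation $w=w_1w_2w_3$ with $w_2$ critical and $l[\tau(w_2)]=f[w_3]^{-1}$; a cancellation of $\tau(w_2)$ against the last letter of $w_1$ is not excluded by membership in $W$, so you must rule it out or avoid it. Your choice of a critical subword ``maximal in length'' does not obviously force the cancellation to land on the right --- the standard argument instead takes the shortest non-geodesic prefix of $w$, so that the critical subword is a suffix of that prefix and the cancelled letter sits immediately to its right --- and the claim that ``inspection'' of Definition \ref{critical words} and Proposition \ref{cons} yields the required adjacent cancelling letter is precisely the computation that has to be carried out, not a one-line observation. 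In the equal-length case you invoke, without proof, that any two geodesic representatives of an element of $A_{2}(m)$ are joined by a chain of $\tau$-moves, and you then explicitly defer the commutation and re-ordering argument needed to convert the first lex-decreasing move of that chain into a lex reduction applied to $w$ itself, while acknowledging that this is ``the technical core of the theorem.'' As it stands, the proposal is an accurate roadmap of the Holt--Rees proof rather than a proof: the two steps that carry the real content are asserted, not established.
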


\begin{obs}\label{pref}
	Since $W$ is the set of shortlex minimal representatives, it is clear that if $w\in W$, then for every prefix $u$ of $w$, also $u\in W$.
\end{obs}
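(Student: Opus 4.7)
The plan is a straightforward contradiction argument that relies only on the definitions of shortlex order and shortlex minimal representative, not on the deeper characterization provided by Theorem \ref{NF}. Suppose $w \in W$ and factor $w = uv$ with $u$ a prefix. Assume for contradiction that $u$ is not shortlex minimal; then there exists a word $u' \neq u$ with $u' =_G u$ and $u' <_{slex} u$. The goal is to use $u'$ to produce a word $u'v$ that represents the same element of $G$ as $w$ but is strictly smaller in shortlex, contradicting $w \in W$.

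The key technical point I would verify first is that shortlex ordering is compatible with right concatenation, i.e.\ if $u' <_{slex} u$ then $u'v <_{slex} uv$ for every $v \in L^{*}$. I would split into the two cases of the definition of $<_{slex}$. If $|u'| < |u|$, then $|u'v| = |u'| + |v| < |u| + |v| = |uv|$, so $u'v <_{slex} uv$ by length. If $|u'| = |u|$ and $u' <_{lex} u$, then $|u'v| = |uv|$, and the first position $i$ at which $u'$ and $u$ differ satisfies $i \leq |u|$, so $u'v$ and $uv$ agree on positions $1, \dots, i-1$ and disagree with $(u'v)(i) = u'(i) <_{lex} u(i) = (uv)(i)$, giving $u'v <_{lex} uv$ and hence $u'v <_{slex} uv$.

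Combining these facts, from $u' =_G u$ we get $u'v =_G uv = w$, and from $u' <_{slex} u$ we get $u'v <_{slex} uv = w$. If $u' \neq u$ then $u'v \neq uv$, so we have a strictly shortlex-smaller representative of $w$, contradicting the shortlex minimality of $w$. Hence $u \in W$, as desired. I do not expect any obstacle: the statement is a formal consequence of how $<_{slex}$ interacts with concatenation on the right, which is why the author presents it as a remark with the phrase ``it is clear''.
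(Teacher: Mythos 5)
Your argument is correct and is exactly the routine verification the paper is alluding to when it says the claim ``is clear'': the compatibility of $<_{slex}$ with right concatenation, which you check carefully in both cases of the definition, is the whole content of the remark. Nothing further is needed.
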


\subsection{Large Artin groups}

 To find normal forms for arbitrary large Artin groups we will use the results we had for dihedral groups.
Let $A_{\Gamma}$ be a large even Artin group with $n$ generators $a_{1},...,a_{n}$, let $m_{i,j}$ be the label of the edge between $a_{i}$ and $a_j$, $L=\{a_{1}^{\pm 1},...,a_{n}^{\pm 1}\}$. We fix a lexicographic order on $L$. Given an arbitrary word $w\in L^{*}$, we can consider all the subwords of $w$ involving only $2$ generators, say $a_{i}$ and $a_{j}$, as words in the dihedral group $A_{2}(m_{i,j})$. For such a subword $u$, we define $p(u)$, $n(u)$ and $\tau$ as before. We will denote by $T_{i,j}$ the set of critical words in the subgroup $A_{2}(m_{i,j})$. 

\begin{mydef}\label{overlap}
	Let $w$ be a freely reduced word over $A_{\Gamma}$. Suppose that $w$ has a factorization $\alpha w_{1}uw_{2}\beta$, where $u\in T_{i,j}$.
	
	If $w_{1}$ is a 2-generator subword on $a_{i_{1}},a_{j_{1}}$ such that $|\{i,j\}\cap\{i_{1},j_{1}\}|=1$, the name of $l[w_{1}]$ is not in $\{a_{i},a_{j}\}$ and $w_{1}f[\tau(u)]\in T_{i_{1},j_{1}}$, then we say that we have a \textbf{critical left overlap} in $w_1f[\tau(u)]$.
	
	Similarly, if $w_{2}$ is a 2-generator subword on $a_{i_{2}},a_{j_{2}}$ such that $|\{i,j\}\cap\{i_{2},j_{2}\}|=1$, the name of $f[w_{2}]$ is not in $\{a_{i},a_{j}\}$ and $l[\tau(u)]w_{2}\in T_{i_{2},j_{2}}$, then we say that we have a \textbf{critical right overlap} in $l[\tau(u)]w_2$.
	
\end{mydef}

\begin{mydef}\label{critical seq}
	
	Let $\alpha_{1} u_{1}\beta_{1}$ be a freely reduced word with $u_{1}$ a critical subword. Consider a sequence:
	
	\begin{center}
		$\alpha_{1} u_{1}\beta_{1}, \hspace{.5cm} \alpha_{1} \tau(u_{1})\beta_{1}=\alpha_{2} u_{2}\beta_{2} \hspace{.5cm }...\hspace{.5cm} \alpha_{k}\tau(u_{k})\beta_{k}$.
	\end{center}
	where all the $u_{i}'s$ are critical subwords, $u_i=w_if[\tau(u_{i-1})]^{*}$  for $w_i$ a suffix of $\alpha_{i-1}$ (in this case we have a critical left overlap) or $u_i=l[\tau(u_{i-1})]^{*}w_i$ for $w_i$ a prefix of $\beta_{i-1}$ (in this case we have a critical right overlap), $i=2,...k$, where $*$ represents some positive power $\geq 1$.
	
	 If at each step we have a left (resp. right) critical overlap, we say that this is a \textbf{leftward} (resp. \textbf{rightward}) \textbf{critical sequence}.
	
	If a critical sequence is such that $\alpha_{k}\tau(u_{k})\beta_{k}$ is not freely reduced, the sequence is called a \textbf{length reducing sequence}. In this situation, we say that $\beta_k$ is the \textbf{tail} of the sequence. If it is reduced and  $\alpha_{k}\tau(u_{k})\beta_{k}<_{lex}\alpha_{1} u_{1}\beta{1}$, then it is a \textbf{lex reducing sequence}.

\end{mydef}

Now, we are going to see some examples to illustrate how critical sequences work. For both examples we are going to consider the large even Artin group based on a triangle with labels $(4,4,4)$, $$A_{\Gamma}=\langle a,b,c \mid abab=baba, acac=caca, bcbc=cbcb\rangle. $$

\begin{ex}\normalfont
Let us consider the word $w=cbcabacbcb$ and the lexicographic order $a<a^{-1}<b<b^{-1}<c<c^{-1}$.
	$$w=cbcaba\overbrace{cbcb}^{u_1} \xrightarrow[\tau(u_1)]{} w_1=cbc\overunderbraces{&\br{2}{u_2}}{&aba&b&cbc}{&&\br{2}{\tau(u_1)}}\xrightarrow[\tau(u_2)]{}w_2=\overunderbraces{&\br{2}{u_3}}{&cbc&b&aba&cbc}{&&\br{2}{\tau(u_2)}}\xrightarrow[\tau(u_3)]{} w_3= \underbrace{bcbc}_{\tau(u_3)}abacbc.$$
	
 And since $w_3<_{lex} w$, this is a leftward lex reducing sequence.
\end{ex}

\begin{ex}
Let us consider the word $w=a^{-1}b^3abc^{-1}a^2cb^{-1}aba$ and the lexicographic order $a<a^{-1}<b<b^{-1}<c<c^{-1}$.
	$$w=\overbrace{a^{-1}b^3ab}^{u_1}c^{-1}a^2cb^{-1}aba\xrightarrow[\tau(u_1)]{} w_1=\overunderbraces{&&\br{2}{u_2}}{&bab^3&a^{-1}&c^{-1}a^2c&b^{-1}aba}{&\br{2}{\tau(u_1)}}\xrightarrow[\tau(u_2)]{} w_2=\overunderbraces{&&\br{2}{u_3}}{bab^{3}&ca^2c^{-1}&a^{-1}&b^{-1}ab&a}{&\br{2}{\tau(u_2)}} $$
	
	$$\xrightarrow[\tau(u_3)]{} w_3=\overunderbraces{&&\br{2}{\text{Free red}}}{bab^{3}ca^2c^{-1}&bab^{-1}&a^{-1}&a}{&\br{2}{\tau(u_1)}}\xrightarrow[Free Red]{}w'=bab^{3}ca^2c^{-1}bab^{-1} $$

	 Since $|w'|<|w|$, this is a rightward length reducing sequence.
	
\end{ex}

\begin{teo}\label{NF2}\cite{Holt}
	Let $G$ be a large Artin group. Let $W$ be the set of all freely reduced words $w$ that admit no rightward length reducing sequence or leftward lex reducing sequence of any length $k\ge 1$. Then $W$ is the set of shortlex representatives. 
	
\end{teo}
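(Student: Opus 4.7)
The plan is to extend Theorem \ref{NF} from the dihedral setting to arbitrary large Artin groups, handling the interaction between distinct $2$-generator subwords via the critical-overlap mechanism of Definition \ref{overlap}. The easy direction---shortlex minimality implies no such sequences exist---is nearly immediate: by Proposition \ref{cons}(1) every $\tau$-move preserves the element of $A_\Gamma$, so a rightward length reducing sequence would yield a strictly shorter representative of the same element, and a leftward lex reducing sequence would yield a lex-smaller representative of the same length, each contradicting shortlex minimality.

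For the converse I would proceed by contradiction and take a freely reduced $w$ with no reducing sequence of either type but with $w \neq w'$ where $w'$ is the shortlex representative of the same element; I would then minimize $|w|$ over all such counterexamples. Two cases arise. First, if some $2$-generator subword $u$ of $w$ fails dihedral shortlex minimality in $A_2(m_{i,j})$, Theorem \ref{NF} supplies an immediate $\tau$-move on a critical subword of $u$, and I would track its effect on $w$: after the move, either (a) the new word is freely reducible at the boundary of $\tau(u)$, giving a length reducing sequence of length one; (b) the new word is lex smaller, giving a lex reducing sequence of length one; or (c) the letters at the boundary of $\tau(u)$ together with an adjacent $2$-generator block form a new critical word $u_2$ as in Definition \ref{critical seq}, so the sequence continues. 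Second, if every $2$-generator subword of $w$ is already dihedral shortlex minimal, I would express an equivalence $w =_G w'$ as a composition of single Artin relations (each a $\tau$-move on some critical $2$-generator subword), and rearrange these moves so that the first move applied at one end of the product can be recognized as a critical overlap on $w$; iterating gives a leftward or rightward critical sequence that must terminate in a free reduction or lex reduction, since otherwise composing all moves would give back $w$ rather than a strictly smaller $w'$.

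The main obstacle is showing that branch (c) really propagates coherently and that the process terminates. This requires using Proposition \ref{cons}(3)--(4) to control the names and signs of $f[\tau(u_i)]$ and $l[\tau(u_i)]$ against the neighbouring blocks: one must verify that when neither free reduction nor lex reduction occurs at step $i$, the first/last letter of $\tau(u_i)$ is forced to extend uniquely into the adjacent $2$-generator subword to form $u_{i+1}$, and that the resulting word $\alpha_{i+1} u_{i+1} \beta_{i+1}$ is still freely reduced in $w$. Combined with an induction on $|w|$ (using the minimal-counterexample hypothesis to handle the tails $\beta_k$), this forces one of the three outcomes (a), (b), (c) at each step, and termination is guaranteed because the total number of distinct words equivalent to $w$ of length at most $|w|$ is finite, so a non-terminating sequence would eventually loop and produce an infinite descent in the well-ordered $<_{slex}$, which is impossible.
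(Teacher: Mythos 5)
This statement is not proved in the paper at all: it is quoted verbatim from Holt--Rees \cite{Holt}, so there is no in-paper argument to compare yours against. Judged on its own merits, your easy direction (a reducing sequence contradicts shortlex minimality, since $\tau$-moves preserve the group element) is fine, but the converse contains a genuine gap precisely at the point where all the difficulty of the theorem lives. In your second case you propose to write the equivalence $w =_G w'$ as a composition of single applications of Artin relations and then "rearrange these moves" so that they assemble into a leftward or rightward critical sequence in the sense of Definition \ref{critical seq}. That rearrangement \emph{is} the theorem: a generic derivation from $w$ to $w'$ may pass through intermediate words longer than both, may insert and cancel $aa^{-1}$ pairs, and its relator applications need not overlap one another in the single-letter, name-controlled way that Definition \ref{critical seq} demands. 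No mechanism is offered for converting such a derivation into a critical sequence, and your termination argument (finitely many equivalent words of length at most $|w|$) does not even see the longer intermediate words that such a derivation can traverse.

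There is a second, smaller failure point in your branch (c): when $\alpha_i\tau(u_i)\beta_i$ is neither freely reducible nor lex-smaller, it is not automatic that $l[\tau(u_i)]$ or $f[\tau(u_i)]$ combines with the adjacent $2$-generator block to form a new critical word; if it does not, none of your outcomes (a), (b), (c) occurs and the induction stalls without producing a contradiction. The actual proof in \cite{Holt} is structured differently: one first establishes that words admitting no reducing sequence are geodesic, proves the closure property that for $w\in W$ and a letter $a$ the word $wa$ is carried into $W$ by a \emph{single} reducing sequence (the paper's Proposition \ref{Simpl}, also imported from \cite{Holt}), and then inducts on the length of a group element to show existence and uniqueness of its representative in $W$. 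That closure lemma is the key ingredient your sketch is missing, and it cannot be recovered from the minimal-counterexample framework as you have set it up.
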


\begin{prop}\label{Simpl}\cite{Holt}
	Suppose that $w\in W$ and $a\in L$ is such that $wa$ is freely reduced but $wa\not\in W$. Then applying a single rightward length reducing or leftward lex reducing sequence followed by a free reduction in the rightward case (where the letter $a$ will be the tail of the sequence) to $wa$ we get an element of $W$.
	
\end{prop}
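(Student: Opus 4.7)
My plan is to invoke Theorem \ref{NF2} and then argue that exactly one reducing step closes the gap between $wa$ and $W$. Since $w \in W$ but $wa \notin W$, Theorem \ref{NF2} guarantees that $wa$ admits either a rightward length reducing sequence or a leftward lex reducing sequence of some length $k \geq 1$. The first step is to show that any such sequence must genuinely involve the new letter $a$: if neither the initial critical subword $u_1$ nor any of the intermediate critical subwords $u_2, \ldots, u_k$ produced by successive $\tau$-moves touched $a$, the entire sequence would be supported inside $w$ and would witness $w \notin W$, contradicting the hypothesis. Hence, in the leftward lex reducing case the initial subword $u_1$ must have $a$ as its last letter, and in the rightward length reducing case the tail $\beta_k$ must equal $a$, so that the final free reduction pairs $a$ with an adjacent letter of $\tau(u_k)$.

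Applying this single sequence (together with the terminal free reduction in the rightward case) yields a word $w''$ with $w'' =_G wa$. To show $w'' \in W$, I would argue by contradiction: suppose $w''$ admits a reducing sequence $\sigma'$ of its own. Using Proposition \ref{cons} --- in particular that $\tau$ is an involution preserving $p$ and $n$, and items (3)--(4), which control how $f[\cdot]$ and $l[\cdot]$ change under $\tau$ --- I would pull $\sigma'$ back through the $\tau$-moves of the original sequence. Critical subwords in $\sigma'$ disjoint from the modified region already existed in $w$ and hence give a reducing sequence there; critical subwords overlapping the modified region can be combined with the first sequence, via the overlap conditions of Definition \ref{overlap}, to produce a reducing sequence that, after iterated $\tau$-moves, is supported inside $w$. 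Either alternative contradicts $w \in W$.

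The main obstacle will be this pullback argument: the combinatorial bookkeeping required to guarantee that every reducing sequence of $w''$ traces back to one of $w$. The analysis splits into cases according to whether the reducing sequence in $wa$ is rightward or leftward, and whether the hypothetical $\sigma'$ is rightward or leftward; items (3) and (4) of Proposition \ref{cons} are used to control how newly-appearing letters at the boundary of the modified region can form critical overlaps with their neighbors. The rightward length reducing case is the most delicate, because the length-changing free reduction at the end could in principle permit a critical overlap to form across the gap; verifying that no such phantom overlap can arise will require invoking Remark \ref{pref} applied to prefixes of $w$, together with a direct inspection of the eight shapes of $\tau$-moves in Definition \ref{taumoves}.
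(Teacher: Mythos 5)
First, note that the paper does not prove this proposition at all: it is imported verbatim from Holt and Rees \cite{Holt}, so there is no internal argument to compare yours against. Your first step is sound: if the reducing sequence guaranteed by Theorem \ref{NF2} for $wa$ did not involve the final letter $a$, then, dropping the trailing $a$ from every word in the sequence, it would be a rightward length reducing or leftward lex reducing sequence for $w$ itself, contradicting $w\in W$; hence $u_1$ ends in $a$ in the leftward case and the tail $\beta_k$ equals $a$ in the rightward case.

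The gap is in the second half, which is the entire substance of the proposition. That $wa$ admits \emph{some} reducing sequence is immediate from Theorem \ref{NF2}; the nontrivial claim is that a \emph{single} such sequence already produces a word in $W$, rather than a word that must be reduced again. Your ``pullback'' of a hypothetical reducing sequence $\sigma'$ of the resulting word $w''$ is a plan, not an argument: the dichotomy ``disjoint from the modified region / overlapping it'' does not by itself yield a contradiction, because in the leftward case the modified region reaches the left end of $w''$ through a chain of critical overlaps whose interaction with $\sigma'$ is precisely what has to be controlled, and in the rightward case the terminal free reduction creates adjacencies in $w''$ that never existed in $wa$, so a critical word of $w''$ straddling the cancellation site need not correspond to any critical word of $wa$ or of $w$. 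Ruling out these configurations is the content of a substantial case analysis in \cite{Holt} on how critical words over distinct two-generator subgroups can overlap in a large-type group, none of which you carry out; you yourself flag this step as ``the main obstacle'' and ``combinatorial bookkeeping,'' which is an accurate description of where the proof actually lives. As written, the proposal establishes only that the reducing sequence has $a$ as its tail, not that one application of it lands in $W$.
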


	\begin{obs}
		
		Given $w\in L^*$ denote by $sl(w)$ the shortlex minimal representative of $w$. We use the same notation for $g\in G$ and put $sl(g)$ for its shortlex minimal representative.
		
	\end{obs}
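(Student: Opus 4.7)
The final statement is a \textbf{Remark} that merely fixes notation: it introduces the symbol $sl(w)$ for the shortlex minimal representative of a word $w\in L^{*}$, and extends the same symbol to group elements $g\in G$. There is no mathematical claim here that admits a nontrivial proof; the content is a definition.

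The only thing one could conceivably \emph{check} is well-definedness of the notation, namely that the phrase ``the shortlex minimal representative'' picks out a unique word. This is immediate from the fact that the shortlex ordering $<_{slex}$ on $L^{*}$ is a total well-ordering: given any $g\in G$, the set $\{w\in L^{*}\mid w=_{G} g\}$ is a nonempty subset of $L^{*}$, and any nonempty subset of a well-ordered set has a unique minimum. Hence $sl(g)$ is well-defined, and by Theorem \ref{NF2} it agrees with the unique element of $W$ representing $g$. For $w\in L^{*}$ we simply set $sl(w):=sl(\overline{w})$ where $\overline{w}$ is the image of $w$ in $G$; again this is unambiguous.

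Since the remark is purely notational, no proof is offered in the paper and none is needed. The interest of the notation is downstream: it provides convenient shorthand for the forthcoming technical arguments in Sections \ref{Section3}, \ref{S3}, \ref{sec} and \ref{S6}, where one repeatedly needs to compare a word against the canonical normal form of the element it represents. In particular, the fact that $sl$ is well-defined lets us freely write statements such as $sl(g)=sl(w)$ when $w=_{G} g$ without ambiguity, and lets us invoke Proposition \ref{Simpl} as a procedure for computing $sl(wa)$ from $sl(w)$ and a letter $a$. The only ``obstacle'' is verifying that one is reading the remark correctly as a definition rather than as a theorem, which it is not.
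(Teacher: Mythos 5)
Your reading is correct: this Remark is purely notational, and the paper accordingly gives no proof for it. Your brief check of well-definedness (that $<_{slex}$ well-orders $L^{*}$, so each $=_{G}$-class has a unique minimum) is sound and is all that could reasonably be said.
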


\begin{obs}\label{pref2}
	Again, since $W$ is the set of shortlex minimal representatives, it is clear that if $w\in W$, then $u\in W$, for every subword  $u$ of $w$.
\end{obs}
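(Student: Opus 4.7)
The plan is to argue by contradiction. Suppose $w \in W$ and write $w = \alpha u \beta$ for some subword $u$, with $u \notin W$. Since $w$ is freely reduced, so is $u$, and hence by Theorem \ref{NF2} the word $u$ admits a rightward length reducing sequence or a leftward lex reducing sequence.

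The key point is that the ingredients of such a sequence, namely critical subwords, $\tau$-moves, and critical left/right overlaps (Definitions \ref{overlap} and \ref{critical seq}), are determined purely by the two-generator subword being acted on and by the letters immediately adjacent to it within the word under consideration. I would therefore perform the very same sequence on the occurrence of $u$ inside $w$: each critical subword of $u$ remains a critical subword of $w$; each suffix of $\alpha_{i-1}$ (or prefix of $\beta_{i-1}$) witnessing a leftward (or rightward) overlap inside $u$ is still a suffix (or prefix) of the corresponding factor inside $w$; and the sign, type and $\tau$-image data do not see what lies in $\alpha$ or $\beta$.

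Writing $u'$ for the result of the sequence applied to $u$ and $w' = \alpha u' \beta$, we have $w' =_G w$. In the length reducing case, $|u'| < |u|$ after the free reduction of the tail, so $|w'| < |w|$ and $w$ is not geodesic. In the lex reducing case, $|u'| = |u|$ and $u' <_{lex} u$; since $\alpha$ is a common prefix of $w$ and $w'$, this promotes to $w' <_{lex} w$ at equal length. Either way $w' <_{slex} w$ with $w' =_G w$, contradicting that $w$ is shortlex minimal. The main thing to verify carefully is the locality claim in the second paragraph, namely that overlaps, free reductions and lex comparisons inside $u$ are unaffected by the surrounding letters of $\alpha$ and $\beta$; this is the only place where the precise form of the definitions is genuinely used, and it is exactly why the remark is asserted to be clear.
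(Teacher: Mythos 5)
Your argument is correct, but it takes a longer route than the remark intends. The paper's justification is the one-line substitution argument: if some subword $u$ of $w=\alpha u\beta$ were not in $W$, then by Theorem \ref{NF2} $u$ is not shortlex minimal, so $u'=sl(u)$ satisfies $u'=_G u$ and $u'<_{slex}u$; substituting gives $\alpha u'\beta=_G w$ with $\alpha u'\beta<_{slex}\alpha u\beta=w$ (either strictly shorter, or of equal length and lex-smaller because $\alpha$ is a common prefix and the first discrepancy falls inside the $u$-portion), contradicting shortlex minimality of $w$. Your proof reaches the same contradiction, but only after first transplanting an explicit length- or lex-reducing sequence from $u$ into $w$; that transplanting step is the locality verification you yourself flag as delicate, and it is simply not needed once Theorem \ref{NF2} is available, since all you use at the end is the existence of \emph{some} word $u'$ equal to $u$ in $G$ and smaller in shortlex. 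Your locality claim is in fact true (critical words, overlaps, $\tau$-moves and the terminal free reduction or lex comparison are all determined by letters lying inside the occurrence of $u$), and it would buy you the stronger statement that the rewriting system itself is closed under passing to factors, i.e. that $w$ literally admits a reducing sequence supported on $u$ -- but for the remark as stated this is overkill, and the direct substitution argument avoids having to check it.
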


\section{Technical results about geodesic words in large Artin groups}\label{Section3}

In this section we will prove several results concerning geodesic words in large Artin groups that we will later use to prove that groups in this family are polyfree.

\begin{mydef}\label{prefylet}
	Given a word $w$ of length $n$, we will denote by $pr(w,k)$, $k=1,...,n$ the prefix of length $k$ of $w$.
\end{mydef}

Our first technical lemma is equivalent to Proposition 4.5 (1) in \cite{Holt}. However, we will include a proof to introduce some of the techniques that we will use in the rest of the paper.

\begin{lem}\label{Lema 1}
	Let $G$ be a large Artin group, $a\in L$. There are no two geodesic words $aw_1, a^{-1}w_2$ such that $aw_1=_{G}a^{-1}w_2$.
\end{lem}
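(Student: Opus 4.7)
My approach would be proof by contradiction combined with induction on the common length $n := |w_1| = |w_2|$; this equality is forced because both $aw_1$ and $a^{-1}w_2$ are geodesic representatives of the same element $g \in G$. For the base case $n = 0$, the equality $a =_G a^{-1}$ gives $a^2 =_G 1$, which is impossible since each standard generator of an Artin group has infinite order---in our setting this is immediate from the fact that the parabolic subgroup $\langle a \rangle$ is infinite cyclic.

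For the inductive step, I would first observe that if $l[w_1] = l[w_2]$, then stripping this common last letter produces geodesics $aw_1', a^{-1}w_2'$ of length $n$ representing the same element (using that every prefix of a geodesic is itself a geodesic), and the inductive hypothesis concludes. The non-trivial case is therefore $l[w_1] \neq l[w_2]$. Here I would pass to the shortlex minimal representative $u := sl(g)$, of length $n+1$, assume without loss of generality that $a <_{lex} a^{-1}$, and note that $u \leq_{lex} a^{-1}w_2$ forces the first letter of $u$ to differ from $a^{-1}$. Hence $a^{-1}w_2 \neq u$, and by Theorem \ref{NF2} the word $a^{-1}w_2$ admits a leftward lex reducing sequence (it cannot admit a rightward length reducing sequence, being geodesic).

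Iterating such lex reducing sequences, one transforms $a^{-1}w_2$ into $u$ through a chain of $\tau$-moves; since the first letter of $u$ is not $a^{-1}$, some $\tau$-move in the chain must act on a critical subword beginning at position $1$, and Proposition \ref{cons}(3) then guarantees that after that step the leftmost letter has a name different from $a$. From the intermediate geodesic for $g$ produced at this step---one that begins with a letter whose name is distinct from $a$---I would run a symmetric analysis on the right end of the word using Proposition \ref{Simpl} to massage last letters, ultimately producing a pair of geodesics ending in the same letter, so that the last-letter stripping reduction of the previous paragraph applies and a strictly shorter instance of the lemma can be fed to the inductive hypothesis.

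The main obstacle I foresee is controlling the right end of the geodesics so that they genuinely share the same last letter after the combinatorial manipulation; this requires careful bookkeeping via Proposition \ref{cons}(3)(4) on how $\tau$-moves on critical subwords affect boundary letters, together with the largeness hypothesis $m_e \geq 3$, which is essential for the shortlex machinery of Theorem \ref{NF2} to apply. An alternative I would explore is to sidestep the induction entirely and extract a direct contradiction from the uniqueness of $sl(g)$ by running leftward lex reducing sequences on \emph{both} $aw_1$ and $a^{-1}w_2$ simultaneously and showing that the two resulting first letters cannot coincide; this is the strategy I would expect to be closer in spirit to the proof of the analogous Proposition 4.5 (1) of \cite{Holt}.
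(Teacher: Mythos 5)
Your setup is the right one---fix a lexicographic order with $a<_{lex}a^{-1}$ at the front, observe that $sl(g)$ must begin with $a$, and look for the moment a leftward lex reducing sequence changes the first letter---but you stop one step short of the contradiction and then replace it with an argument that does not work. At the point where you invoke Proposition \ref{cons}(3) you correctly conclude that the $\tau$-move acting on a critical prefix produces a word whose first letter has a name different from $a$. That is already the contradiction: the word produced at the end of a leftward lex reducing sequence is $<_{lex}$ its input, which begins with $a^{-1}$ or $a$, and since $a$ is the only letter preceding $a^{-1}$ in the order, that output word must itself begin with $a$ or $a^{-1}$, i.e.\ with a letter of name $a$. (Equivalently: the last critical word $u_l$ of the sequence begins with $a^{\pm1}$ and $\tau(u_l)$ must also begin with $a^{\pm 1}$, violating \ref{cons}(3).) Instead of closing the argument there, you treat the name-change as a new fact and launch a ``symmetric analysis on the right end'' intended to force the two geodesics to share a last letter so that your induction applies. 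That step is not an argument: you give no mechanism by which Proposition \ref{Simpl} would produce matching last letters, and you yourself flag it as the unresolved obstacle. As written, the inductive step in the case $l[w_1]\neq l[w_2]$ is therefore incomplete. A secondary inaccuracy: $u\leq_{lex}a^{-1}w_2$ alone does not force $f[u]\neq a^{-1}$; you need $u\leq_{lex}aw_1$ together with the minimality of $a$ in the order to get $f[u]=a$.

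For comparison, the paper's proof avoids chains of reducing sequences on the full word entirely. It considers the prefixes $pr(a^{-1}w_2,k)$, notes that each $sl(pr(a^{-1}w_2,k))$ must begin with $a$ or $a^{-1}$ (being $\leq_{lex}$ a word starting with $a^{-1}$), and locates the unique transition index $m$ where the first letter of the shortlex representative flips from $a^{-1}$ to $a$. Proposition \ref{Simpl} then supplies a \emph{single} leftward lex reducing sequence from $sl(pr(a^{-1}w_2,m-1))\,\hat{w}(m)$ to $sl(pr(a^{-1}w_2,m))$, whose last critical word must begin with $a^{-1}$ while its $\tau$-image begins with $a$, contradicting Proposition \ref{cons}(3). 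If you keep your global-chain formulation, you must add the lex-comparison observation above to each link of the chain; the prefix formulation makes that bookkeeping unnecessary. No induction on length, and no analysis of last letters, is needed.
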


\begin{proof}
	
	Let us suppose that both words are geodesic and represent the same element of the large Artin group $G$. We may assume that $w_1$ is a shortlex representative. Without loss of generality we will suppose that $a<a^{-1}$ are the first letters in the lexicographic order. Let us define $\hat{w}=a^{-1}w_2$, then we can suppose that $sl(\hat{w})=aw_1$ (it must have this form since $a$ is the first letter of the lexicographic order and the word is geodesic). Let $n=|\hat{w}|$.
	
	It is clear that since $\hat{w}$ is geodesic, $pr(\hat{w},k)$ is also geodesic for every $k=1,...,n$. Obviously, we have $pr(\hat{w},k)=a^{-1}\alpha_k$, $k=1,...,n$, where $\alpha_1=\epsilon$. Since in the lexicographic order before $a^{-1}$ there is only $a$ and $sl(pr(\hat{w},n))=sl(\hat{w})=aw_1$, there exists $m\in\{2,...,n\}$ such that $f[sl(pr(\hat{w},m))]=a$ and $f[sl(pr(\hat{w},m-1))]=a^{-1}$. But since $sl(pr(\hat{w},m-1))\in W$, there is a leftward lex reducing sequence that transforms $sl(pr(\hat{w},m-1))\hat{w}(m)$ into $sl(pr(\hat{w},m))$. Let us define $l$ to the length of that sequence, and let $u_l$ be the last critical word of the sequence.
	
	Since that sequence must change the first letter of the word, we have that $\tau(u_l)$ must begin with $a$ and $u_l$ must begin with $a^{-1}$. But, by Proposition \ref{cons}, $f[\tau(u_l)]$ and $f[u_l]$ must have different names, giving a contradiction.	
\end{proof}

\begin{lem}\cite{CHR}\cite{HRConjugacy}\label{Lema 2}
	Consider a large Artin group $G$ and let $b\in L$. Let $t>0$, $w$ a word. Then:
	\begin{itemize}
		\item A word $b^{t}w$ is geodesic if and only if $bw$ is geodesic.
		
		\item  A word $b^{-t}w$ is geodesic if and only if $b^{-1}w$ is geodesic.
		
	\end{itemize} 
\end{lem}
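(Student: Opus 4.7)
The plan is to reduce the second bullet to the first by replacing $b$ with $b^{-1}\in L$, then prove the first by induction on $t$. The forward direction ($b^tw$ geodesic $\Rightarrow bw$ geodesic) is immediate: a representative $v$ of $bw$ with $|v|<1+|w|$ would give $b^{t-1}v$ as a representative of $b^tw$ of length $(t-1)+|v|<t+|w|$, contradicting geodesicity of $b^tw$.

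For the reverse direction I would argue by induction on $t\ge 1$, the case $t=1$ being trivial. In the inductive step, assume $bw$ is geodesic; by the inductive hypothesis $b^{t-1}w$ is also geodesic. Suppose for contradiction that $b^tw$ is not geodesic and let $v$ be a shortlex minimal representative of $b^tw$, so $|v|<t+|w|$.

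The key input is a parity observation. The map $s:G\to\mathbb{Z}$ sending every generator to $1$ is a well-defined homomorphism, since each Artin relation $_{m_e}(uv)={}_{m_e}(vu)$ has $m_e$ letters on each side. For any word $u\in L^{*}$, $|u|-s(u)$ equals twice the number of negative letters of $u$ and is therefore even, so $|u|\equiv s(u)\pmod 2$. Applied to $v$ and to $w$, and using that $s(b)=\pm 1$, this gives $|v|\equiv s(v)=s(b^tw)\equiv t+s(w)\equiv t+|w|\pmod 2$. Together with $|v|<t+|w|$ this forces $|v|\le t-2+|w|$.

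Now case on $f[v]$. If $f[v]=b$, write $v=bv'$; then $v'=_G b^{t-1}w$ has length $|v|-1\le t-3+|w|<t-1+|w|$, contradicting the inductive hypothesis. Otherwise $b^{-1}v$ is freely reduced and represents $b^{t-1}w$, with length $1+|v|\le t-1+|w|$; since $b^{t-1}w$ is geodesic of length $t-1+|w|$, equality must hold, so $|v|=t-2+|w|$ and $b^{-1}v$ is a geodesic representative of $b^{t-1}w$ beginning with $b^{-1}$. But the word $b^{t-1}w$ itself is a geodesic beginning with $b$, which contradicts Lemma~\ref{Lema 1} and completes the induction. The only nontrivial ingredient is the parity observation, which restricts $|v|$ to a single value and lets Lemma~\ref{Lema 1} deliver the contradiction.
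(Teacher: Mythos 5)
Your proof is correct, but it takes a genuinely different route from the paper's. The paper does not induct on $t$: it picks the minimal $k$ with $b^{k}w$ non-geodesic, passes to the inverse word $w^{-1}b^{-(k-1)}$, and invokes Proposition~\ref{Simpl} to extract a rightward length reducing sequence whose tail is the final $b^{-1}$; the last $\tau$-move then hands it a geodesic representative of $w^{-1}b^{-(k-1)}$ ending in $b$, hence a geodesic representative of $b^{k-1}w$ beginning with $b^{-1}$, contradicting Lemma~\ref{Lema 1}. You reach the same final contradiction with Lemma~\ref{Lema 1}, but you manufacture the $b^{-1}$-initial geodesic by elementary means: the exponent-sum homomorphism $s$ forces $|v|\equiv t+|w|\pmod 2$, pinning $|v|$ down to $t-2+|w|$, after which prepending $b^{-1}$ to $v$ gives a geodesic of the right length. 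What your approach buys is independence from the Holt--Rees critical-sequence machinery (Proposition~\ref{Simpl} and $\tau$-moves) --- the only nonelementary input is Lemma~\ref{Lema 1} itself --- and the parity trick works in any Artin group since both sides of every Artin relation are positive words of equal length. What the paper's approach buys is uniformity: the same reducing-sequence technique is reused throughout Section~\ref{Section3} (e.g.\ in Lemmas~\ref{Lema 3}, \ref{Lema 5} and \ref{Lema 7}), so the author pays the setup cost once. One cosmetic point: your remark that $b^{-1}v$ is freely reduced is not actually needed --- only the length count $|b^{-1}v|=1+|v|$ and the first letter matter --- though it does no harm.
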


\begin{proof}
	We are going to prove only the case when the word is $b^{t}w$, the case of $b^{-t}w$ is completely analogous.
	The fact that if $b^{t}w$ is geodesic then $bw$ is geodesic is obvious by Remark \ref{pref2}.
	Let us suppose then that $bw$ is geodesic but $b^tw$ is not geodesic. Thus, there must exist $k>1$ such that $b^{k}w$ is not geodesic but $b^{k-1}w$ is geodesic. Therefore, $w^{-1}b^{-(k-1)}$ is geodesic, but $w^{-1}b^{-k}$ is not geodesic.
	Hence, $sl(w^{-1}b^{-(k-1)})b^{-1}$ is not geodesic and by Theorem \ref{Simpl} there exists a rightward length reducing sequence in which $b^{-1}$ is going to be the tail that will be eliminated. Let us define $l$ to the length of that sequence, and let $u_l$ be the last critical word of the sequence. Therefore, $sl(w^{-1}b^{-(k-1)})=_G \hat{w}\tau(u_l)$ is geodesic and $\tau(u_l)$ ends with $b$. But then, $sl(w^{-1}b^{-(k-1)})$ has a geodesic representative ending with $b$, so $b^{k-1}w$ has a geodesic representative beginning with $b^{-1}$, which is impossible by Lemma \ref{Lema 1}.
\end{proof}

\begin{lem}\label{Lema 3}
	Consider a large Artin group $G$ and let $b,h\in L$,  $w'\in L^*$. If $w=b^t w'$ (resp. $w=b^{-t}w'$) is geodesic but $wh$ doesn't have a geodesic representative beginning with $b^t$ (resp. $b^{-t}$), then $w'h$ has a geodesic representative beginning with $b^{-1}$ (resp. $b$).
\end{lem}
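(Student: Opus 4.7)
The plan is to argue by contradiction: assume $w'h$ admits no geodesic representative beginning with $b^{-1}$, set $u^{*}=sl(w'h)$ (so $f[u^{*}]\neq b^{-1}$), and manufacture a geodesic representative of $wh=b^{t}w'h$ beginning with $b^{t}$, contradicting the hypothesis.

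The key preliminary observation is a parity fact: every defining relator of $G$ has even total length $2m_{e}$, so sending each generator to $1\in\mathbb{Z}/2$ extends to a homomorphism $G\to\mathbb{Z}/2$, and consequently the parity of $|w|$ depends only on the element of $G$ that $w$ represents. Writing $|g|_{G}$ for the length of a geodesic word representing $g\in G$, the triangle inequality combined with this parity observation forces $|bg|_{G}\in\{|g|_{G}-1,\,|g|_{G}+1\}$ for every $g\in G$ and every $b\in L$; moreover, $|bg|_{G}=|g|_{G}-1$ if and only if $g$ admits a geodesic representative beginning with $b^{-1}$. For the non-trivial direction, suppose $z$ realises $|bg|_{G}=|g|_{G}-1$; then $f[z]\neq b$, since otherwise $b^{-1}z$ would represent $g$ with length $|g|_{G}-2$; hence $b^{-1}z$ is a freely reduced geodesic word of length $|g|_{G}$ beginning with $b^{-1}$.

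Applying this fact, the standing assumption gives $|bw'h|_{G}=|w'h|_{G}+1$, so the freely reduced word $bu^{*}$ is a geodesic representative of $bw'h$ starting with $b$. Lemma \ref{Lema 1} then rules out any geodesic representative of $bw'h$ beginning with $b^{-1}$, and a second application of the parity fact yields $|b\cdot bw'h|_{G}=|bw'h|_{G}+1$, making $b^{2}u^{*}$ a freely reduced geodesic representative of $b^{2}w'h$ beginning with $b^{2}$. Iterating this step, $b^{k}u^{*}$ is a geodesic representative of $b^{k}w'h$ beginning with $b^{k}$ for every $k\geq 0$; specialising to $k=t$ contradicts the hypothesis, so $w'h$ must admit a geodesic representative beginning with $b^{-1}$. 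The case $w=b^{-t}w'$ is strictly symmetric, interchanging $b$ and $b^{-1}$ throughout. The main obstacle of the argument lies in the preliminary fact, specifically in ruling out the intermediate case $|bg|_{G}=|g|_{G}$; without the even-length relators of an Artin group the iteration driving the contradiction would stall.
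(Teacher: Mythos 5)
Your proof is correct, but it takes a genuinely different route from the paper's. The paper first reduces to the exponent-one case via Lemma \ref{Lema 2}, passes to the inverted word $sl(h^{-1}w'^{-1})b^{-1}$, and invokes Proposition \ref{Simpl} to extract a rightward length-reducing sequence whose last $\tau$-move yields a geodesic representative of $h^{-1}w'^{-1}$ ending in $b$; inverting again gives the desired representative of $w'h$ beginning with $b^{-1}$. You instead prove the contrapositive by purely metric means: the homomorphism $A_{\Gamma}\to\mathbb{Z}/2$ sending every generator to $1$ (well defined because both sides of an Artin relation have the same length $m_e$) fixes the parity of word length on each group element, so $|bg|_G=|g|_G\pm 1$, with the $-1$ case occurring exactly when $g$ has a geodesic representative beginning with $b^{-1}$; assuming $w'h$ has no such representative, you stack letters $b$ one at a time, using Lemma \ref{Lema 1} at each stage to exclude a geodesic representative of $b^k w'h$ beginning with $b^{-1}$, and arrive at the geodesic word $b^t\,sl(w'h)$ contradicting the hypothesis on $wh$. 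Your argument avoids the critical-word machinery entirely (largeness enters only through Lemma \ref{Lema 1}), does not actually use the hypothesis that $w$ itself is geodesic, and essentially re-proves Lemma \ref{Lema 2} along the way; the paper's version, by contrast, stays within the Holt--Rees rewriting framework and exhibits the new representative explicitly as the output of a single length-reducing sequence, which is the form in which this information is reused elsewhere in Section \ref{Section3}.
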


\begin{proof}
	We are going to prove the result only in the case  $w=b^tw'$ with $t>0$, the other case is completely analogous.
	
	The hypothesis implies that $b^tw'h$ is not geodesic, thus by Lemma \ref{Lema 2}, $bw'h$ is not geodesic. Let us consider the word $w_1=bw'$ which is geodesic, then $w_1h=bw'h$ doesn't have a geodesic representative beginning with $b$ (otherwise, $wh$ would have a geodesic representative beginning by $b^t$ contradicting the hypothesis). We have
	$$(w_1h)^{-1}=h^{-1}w_1^{-1}=h^{-1}w'^{-1}b^{-1}=_G sl(h^{-1}w'^{-1})b^{-1}.$$
	
	But since $w_1h$ doesn't have a geodesic representative beginning with $b$, then $sl(h^{-1}w'^{-1})b^{-1}$ is not geodesic and by Theorem \ref{Simpl} there is a rightward length reducing sequence for $sl(h^{-1}w'^{-1})b^{-1}$ such that $b^{-1}$ is the tail and $sl(h^{-1}w'^{-1})=_G \hat{w}\tau(u_l)$ geodesic with $l[\tau(u_l)]=b$. 
	
	Therefore, there exists a geodesic representative of $h^{-1}w'^{-1}$ ending with $b$, and then $w'h$ has a geodesic representative beginning with $b^{-1}$.
\end{proof}

In the next technical results we want to understand when we can have elements admitting geodesic representatives of the form
 $a^sw_1=_G b^t w_2$.

\begin{ex}\normalfont
	Consider the group $A_{\Gamma}=\langle a,b\mid (ab)^2=(ba)^2\rangle$. Note that the relation $abab=baba$ implies $[a,bab]=[b,aba]=1$. From this we get:
	$$b^2(aba)^2=abababab=a(baba)bab=a^2babbab=a^2(bab)^2.$$
\end{ex}

The motivation of the following technical results is precisely to understand better this kind of situation. From now on, let $A_{\Gamma}$ be a large Artin group and let $W$ be the set of words that admit neither rightward length reducing sequence nor leftward lex reducing sequence. Recall that then $W$ is the set of shortlex representatives by Theorem \ref{NF2} (in particular, words in $W$ are geodesic).

Notice that the set $W$ of shortlex representatives depends on the chosen order in the generating set. In the following technical results we only want to prove certain restrictions on the form of the geodesic representatives and to do so, we use appropriate lexicographic orders in each case. This has the effect that we will have to check consistence of the chosen order when applying these results.

\begin{lem}\label{Lema 5}
	Let $G=A_{\Gamma}$ be a large Artin group.
	Suppose that $a$ (resp. $a^{-1}$) is the first letter of the lexicographic order and that there exist a geodesic word $u$ and a letter $l$ such that
	
	\begin{itemize}
		\item  $ul$ is geodesic,
		\item $sl(ul)$ begins with $a^s$, $s>1$, (resp. $s<-1$),
		\item $sl(u)$ doesn't begin with $a^s$.
	\end{itemize}  Then $sl(u)$ must begin with $a^i$ with $i=s-1$ (resp. $i=s+1$).
	
\end{lem}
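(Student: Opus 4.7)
The plan is to deduce the conclusion from Lemma \ref{Lema 3} after a preparatory sub-claim that translates shortlex prefixes into geodesic representatives.

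\emph{Sub-claim (to be proved by induction on $j$).} For each $j \geq 1$, the shortlex $sl(g)$ begins with $a^j$ (resp.\ $a^{-j}$) if and only if $g$ admits a geodesic representative beginning with $a^j$ (resp.\ $a^{-j}$). The forward direction is immediate. For the reverse, the fact that $a$ is lex-minimal (and Lemma \ref{Lema 1} in the negative case) forces the first letter of $sl(g)$; one then inducts on $j$ by comparing $sl(g)$ with $a \cdot sl(a^{-1}g)$, which is geodesic by Lemma \ref{Lema 2}.

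With the sub-claim, the hypothesis that $sl(u)$ does not begin with $a^s$ becomes: $u$ has no geodesic representative beginning with $a^s$. I then apply Lemma \ref{Lema 3} to the geodesic word $a^s v = sl(ul)$ with $h = l^{-1}$; since $(a^s v)l^{-1} =_G u$, the conclusion produces a geodesic representative of $vl^{-1}$ beginning with $a^{-1}$. A short check rules out $v$ ending in $l$: otherwise $v = v'l$ and the prefix $a^s v' \in W$ of $sl(ul)$ (Remark \ref{pref2}) would give a geodesic representative of $u$ starting with $a^s$, contradicting the hypothesis via the sub-claim. Hence $vl^{-1}$ is freely reduced with word length $|v|+1$.

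Let $i \geq 0$ be maximal with $sl(u) = a^i\tilde w$, so by hypothesis $i < s$. Since $\tilde w$ is a subword of $sl(u) \in W$, it is itself shortlex; by the sub-claim applied to $\tilde w$ (which does not begin with $a$), $\tilde w$ has no geodesic representative beginning with $a$, and free-reducedness of $sl(u)$ moreover prevents $\tilde w$ from beginning with $a^{-1}$ when $i \geq 1$. Lemma \ref{Lema 2} then shows $a^{-(s-i)}\tilde w$ is a freely reduced geodesic representative of $a^{-s}u$ of length $|u| + s - 2i$. Since $a^{-s}u =_G vl^{-1}$, bounding by the word length yields
\[
|u| + s - 2i \;\leq\; |vl^{-1}|_{\mathrm{word}} \;=\; |v| + 1.
\]
Substituting $|u| = s + |v| - 1$ collapses this to $i \geq s-1$, and combined with $i < s$ we conclude $i = s-1$. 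The resp.\ statement is symmetric, with $a$ and $a^{-1}$ exchanged (Lemma \ref{Lema 1} still enforcing mutual exclusivity).

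The main technical hurdle is the length formula $|a^{-s}u| = |u| + s - 2i$, which must be verified uniformly across the two regimes $i \geq 1$ and $i = 0$ (where $\tilde w$ is permitted to begin with $a^{-1}$); the sub-claim is precisely what makes this uniform treatment possible.
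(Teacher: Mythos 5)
Your argument is correct, but it is genuinely different from the paper's. The paper proceeds by induction on $|ul|$: it invokes Proposition \ref{Simpl} to produce a leftward lex reducing sequence carrying $sl(u)l=a^iu'l$ to $sl(ul)=a^s\hat u$, analyses the last critical word of that sequence via Proposition \ref{cons} to pin down the prefix $\alpha_r=a^i$, deduces $sl(u'l)=a^{s-i}\hat u$, and closes with the inductive hypothesis to force $s-i=1$. You instead avoid the critical-word machinery entirely and run a direct length count on the single element $a^{-s}u$: it is represented by the word $vl^{-1}$ of length $|v|+1=|u|-s+2$, and also by $a^{-(s-i)}\tilde w$ of length $|u|+s-2i$, which you show is geodesic; comparing the two forces $i\geq s-1$. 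This buys a shorter, softer argument (only Lemmas \ref{Lema 1}--\ref{Lema 3} plus your prefix sub-claim, no $\tau$-moves), at the cost of the one genuinely delicate step being the geodesicity of $a^{-(s-i)}\tilde w$. Two small remarks there. First, Lemma \ref{Lema 2} only reduces that step to showing $a^{-1}\tilde w$ is geodesic; to finish you need the observation that word lengths of representatives of a fixed element agree mod $2$ (via the exponent-sum homomorphism to $\mathbb{Z}$), so that non-geodesicity of $a^{-1}\tilde w$ would hand $\tilde w$ a geodesic representative $a\cdot sl(a^{-1}\tilde w)$ beginning with $a$, contradicting your sub-claim --- you have all the ingredients but attribute the conclusion to Lemma \ref{Lema 2} alone, so spell this out. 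Second, the appeal to Lemma \ref{Lema 3} and the check that $vl^{-1}$ is freely reduced are both superfluous: the inequality only uses that the word length $|v|+1$ of the representative $vl^{-1}$ bounds the geodesic length of the element from above, which holds unconditionally.
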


\begin{proof}
	Without loss of generality we may suppose that $u$ is a shortlex representative.  We are going to argue by induction on $|ul|=k$.
	We will only prove the result for $s>1$, the case $s<-1$ is analogous.
	
	If $|ul|=2$, then $ul=a^2$ and therefore the only option is $u=l=a$, so $s=2$ and $i=s-1=1$.
	
	Let us suppose that the result is true for length smaller than $k$. Recall that $sl(ul)=a^s\hat{u}$ and that $sl(u)=a^i u'$ for some $i\in\mathbb{Z}$, some $\hat{u}$ and some $u'$. We may also assume that $\hat{u}$ and $u'$ don't begin with $a$.
	
	Note that as $sl(u)$ is shortlex and since $a$ is the first letter of the lexicographic order (thus $a<a^{-1}$), $u$ cannot have any geodesic representative beginning by $a^s$.  We want to prove that $i=s-1$ if $s>1$. Note that $u'$ is also shortlex and does not begin with $a^{\pm 1}$ so it doesn't have any geodesic representative beginnig with $a$. Moreover, as $u$ is geodesic, $u'$ cannot have any geodesic representative beginning with $a^{-1}$. Finally notice that applying Lemma \ref{Lema 1} to $sl(u)l$ and $sl(ul)$, we deduce that $i$ and $s$ must have the same sign.

	We know by Proposition \ref{Simpl} that there exists a leftward length reducing sequence that transforms $sl(u)l=a^i u' l$ into $sl(ul)=a^s\hat{u}$.
		$$... \rightarrow \alpha_r u_r \beta_r \rightarrow \alpha_r\tau(u_r)\beta_r=a^s\hat{u}$$
	
	Note that $\alpha_r$ is a prefix of both $sl(u)l=a^i u' l$ and $sl(ul)=a^s\hat{u}$, so the number of $a's$ at the beginning of $\alpha_r$ cannot be bigger than $i$. Since $\alpha_r\tau(u_r)\beta_r$ begins with $a^s$, $\alpha_r=a^p$, $p\leq i$, and as $f(u_r)$ and $f(\tau(u_r))$ must have different names, we conclude that $\alpha_r=a^i$, which implies $a^s\hat{u}=a^i a^{s-i}\hat{u}$ and $sl(u'l)=a^{s-i}\hat{u}$. But now, if $s-i\neq 1$, using the inductive hypothesis, we would have that $sl(u')$ begins with $a$ which is impossible. So, $s-i=1$ and therefore $i=s-1$.
\end{proof}

\begin{corol}\label{Corol 6}
	Suppose that $a$ is the first letter of the lexicographic order and let $w=b^{\pm 1}w'$ be geodesic such that $sl(w)=a^s\tilde{w}$, $s>0$, with $a\neq b$ a letter.
	Then there are prefixes $pr(w,l_1),...,pr(w,l_s)$, $l_1<l_2<...<l_s$, such that $sl(pr(w,l_i))$ begins with $a^i$, but not by $a^{i+1}$.
\end{corol}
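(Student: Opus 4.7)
The plan is to track, as the prefix length $k$ grows from $1$ to $n=|w|$, how the initial $a$-power of the shortlex representative of $pr(w,k)$ evolves, and then to recover the required prefixes from a discrete intermediate value argument. Concretely, I would define, for each $k\in\{1,\dots,n\}$, the quantity $f(k)$ to be the largest integer $i\ge 0$ such that $sl(pr(w,k))$ begins with $a^i$. Two boundary values are immediate: $f(1)=0$, because $pr(w,1)=b^{\pm 1}$ is already its own shortlex representative and its single letter differs from $a$; and $f(n)=s$, taking $s$ maximal so that $\tilde w$ does not begin with $a$.

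The heart of the argument is the monotonicity estimate $f(k+1)\le f(k)+1$ for every $1\le k<n$. To prove it, fix such a $k$, set $u=sl(pr(w,k))$ and $l=w(k+1)$, and observe that $ul=_{G}pr(w,k+1)$ and $|ul|=|u|+1=|pr(w,k+1)|$; since $pr(w,k+1)$ is a prefix of the geodesic word $w$, it is geodesic, and hence so is $ul$ (in particular, freely reduced). This places us in the setup of Lemma \ref{Lema 5}: whenever $f(k+1)\ge 2$ and $f(k)<f(k+1)$, the word $sl(ul)=sl(pr(w,k+1))$ begins with $a^{f(k+1)}$ while $sl(u)=u$ does not, so Lemma \ref{Lema 5} (applied with the exponent $f(k+1)$) forces $u$ to begin with $a^{f(k+1)-1}$, giving $f(k)=f(k+1)-1$. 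The remaining cases $f(k+1)\le 1$ or $f(k+1)\le f(k)$ are immediate.

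With the estimate in hand, for each $i\in\{1,\dots,s\}$ I would define
\[
l_i=\min\{k\in\{1,\dots,n\}: f(k)\ge i\}.
\]
This minimum exists because $f(n)=s\ge i$, and satisfies $l_i\ge 2$ because $f(1)=0$. By minimality $f(l_i-1)\le i-1$, and the monotonicity estimate gives $f(l_i)\le f(l_i-1)+1\le i$; combined with $f(l_i)\ge i$ this forces $f(l_i)=i$, i.e., $sl(pr(w,l_i))$ begins with $a^i$ but not $a^{i+1}$. Finally $l_i<l_{i+1}$, since $f(l_i)=i<i+1$ excludes $l_i$ from the defining set of $l_{i+1}$, yielding the required strictly increasing sequence.

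The only delicate point is verifying that Lemma \ref{Lema 5} is genuinely applicable at each step. This is where the standing hypothesis that $a$ is the first letter of the lexicographic order is used, together with the length comparison above that guarantees $ul$ is both geodesic and freely reduced. Beyond this, the proof is a straightforward discrete intermediate value argument.
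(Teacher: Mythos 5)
Your proof is correct and follows essentially the same route as the paper: both arguments reduce to Lemma \ref{Lema 5}, which guarantees that the initial $a$-power of $sl(pr(w,k))$ can increase by at most one when a single letter is appended. The paper packages this as a backward induction on $s$ (locating the greatest $r$ where the power first reaches $s$ and recursing on $pr(w,r-1)$), whereas you run a forward scan with the function $f(k)$ and a discrete intermediate value argument; the difference is purely organizational.
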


\begin{proof}
	We will argue by induction over $s$. 
	If $s=1$ there is nothing to prove. Let us suppose that it is true for $s-1$. It is clear that $pr(w,j)=pr(w,j-1)w(j)$. Let $r$ be the greatest integer such that $sl(pr(w,r-1)w(r))=sl(pr(w,r))=a^s\gamma$ and $sl(pr(w,r-1))$ does not begin with $ a^s$. Then, by Lemma \ref{Lema 5}, $sl(pr(w,r-1))=a^{s-1}w'$ and we set $l_s=r$. Now, we can assume that the result is true for $pr(w,r-1)$ by induction hypothesis and we conclude the proof.
\end{proof}

\begin{obs}\label{Obs}
	Notice that with an analogous strategy we can prove the following statement. Let $w=b^{\pm 1}w'$ be geodesic such that $sl(w)=a^s\tilde{w}$, $s<0$.
	There exist prefixes $pr(w,l_1),...,pr(w,l_s)$, $l_1<l_2<...<l_s$, such that $sl(pr(w,l_i))$ begins with $a^i$, but $sl(pr(w,l_i-1))$ begins with $a^{i+1}$ but not by $a^i$,  $i=-1,...,s$.
\end{obs}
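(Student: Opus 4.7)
The plan is to mirror the proof of Corollary \ref{Corol 6}, this time invoking the ``$s<-1$'' branch of Lemma \ref{Lema 5} rather than the ``$s>1$'' branch. First I would fix a lexicographic order on the generators in which $a^{-1}$ is the first letter; as noted in the paragraph preceding Lemma \ref{Lema 5}, we are free to make such a choice because the conclusion of the remark only constrains the initial $a^{\pm 1}$-block of the shortlex representatives of certain prefixes, and this property is insensitive to the relative order of the remaining letters. With this ordering, any geodesic whose shortlex begins with $a^r$ for $r<0$ cannot be rewritten to begin with a smaller letter, which is exactly the symmetric situation.

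The argument proceeds by induction on $|s|$. For the base case $|s|=1$ let $l_{-1}$ be the least index such that $sl(pr(w,l_{-1}))$ starts with $a^{-1}$; this index exists because $sl(w)$ itself starts with $a^{-1}$, and by the minimality of $l_{-1}$ the shortlex representative $sl(pr(w,l_{-1}-1))$ does not start with $a^{-1}$. This is precisely the required condition at $i=-1$.

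For the inductive step, suppose $s<-1$ and that the conclusion holds whenever the first-letter exponent has absolute value less than $|s|$. Let $r$ be the greatest index such that $sl(pr(w,r))=a^s\gamma$ begins with $a^s$ while $sl(pr(w,r-1))$ does not begin with $a^s$; this is well-defined since the condition holds at $r=|w|$ and fails for short enough prefixes. Set $l_s=r$. Since $w$ is geodesic, so is every prefix, so $ul=pr(w,r)$ is geodesic with $u=pr(w,r-1)$ and $l=w(r)$. Applying the ``$s<-1$'' branch of Lemma \ref{Lema 5} then forces $sl(u)=sl(pr(w,r-1))$ to begin with $a^{s+1}$, giving the $i=s$ clause of the remark. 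Now $pr(w,r-1)$ is geodesic, still of the form $b^{\pm 1}(\cdots)$ (with $r-1\geq 1$ because $|s|\geq 2$), and has a shortlex representative beginning with $a^{s+1}$, where $|s+1|<|s|$; the inductive hypothesis then produces indices $l_{-1}<l_{-2}<\cdots<l_{s+1}$, all lying strictly inside $pr(w,r-1)$ and hence all smaller than $l_s$, meeting the remaining conditions.

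The only substantive obstacle is sign bookkeeping and ensuring the chain of inequalities $l_{-1}<l_{-2}<\cdots<l_s$ is preserved through the induction; this is automatic since each application of the inductive hypothesis operates on a strictly shorter prefix than $pr(w,l_s)$. All genuinely nontrivial content is already packaged inside the negative branch of Lemma \ref{Lema 5}.
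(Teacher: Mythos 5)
Your argument is correct and is precisely the ``analogous strategy'' the remark alludes to: an induction mirroring Corollary \ref{Corol 6}, driven by the negative branch of Lemma \ref{Lema 5} after switching to a lexicographic order with $a^{-1}$ first. The paper gives no further detail, and your expansion (including the base case and the ordering of the indices) fills it in faithfully.
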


\begin{lem}\label{Lema 7}
	In a large even Artin group, let $s\geq t\geq 1$ (resp. $s\leq t\leq -1$) and \begin{equation}\label{eq1}
		w=b^tw', \hat{w}=a^s\hat{w}'    \text{ geodesic words such that } w=_G \hat{w}.
	\end{equation}
	
	Let $2m$ be the label between $a$ and $b$. The minimal length of such a word $w$ is $|w|=|s|+|t|(2m-1)$. Moreover, the only word $w$ of this length satisfying (\ref{eq1}) is: $$w=b^t[_{2m-1}(a,b)]^ta^{s-t}, \hat{w}= a^s[_{2m-1}(b,a)]^t$$ for $s,t\geq 1$ or  $$w=b^t[_{2m-1}(a^{-1},b^{-1})]^{|t|}a^{-|s-t|}, \hat{w}= a^s[_{2m-1}(b^{-1},a^{-1})]^{|t|}$$ for $s,t\leq -1$.
\end{lem}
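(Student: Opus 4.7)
I plan to reduce the lemma to a statement in the dihedral subgroup $A_2(2m) = \langle a, b \rangle$, using the retraction onto parabolic subgroups that is available for even Artin groups, and then handle the dihedral case by induction. I consider only $s \geq t \geq 1$; the case $s \leq t \leq -1$ is symmetric via Remark \ref{Obs}.

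First, achievability is a direct computation in $A_2(2m)$. The Artin relation ${}_{2m}(a, b) = {}_{2m}(b, a)$ rewrites as $b \cdot {}_{2m-1}(a, b) = a \cdot {}_{2m-1}(b, a) = \Delta$, where $\Delta = (ab)^m = (ba)^m$ is the Garside element of $A_2(2m)$, which is central because $2m$ is even. Iterating this identity $t$ times yields $b^t [_{2m-1}(a, b)]^t = \Delta^t = a^t [_{2m-1}(b, a)]^t$, and right-multiplication by $a^{s-t}$ produces the word $w_0 = b^t [_{2m-1}(a, b)]^t a^{s-t}$ of length $s + t(2m - 1)$, equal in the group to $a^s [_{2m-1}(b, a)]^t$.

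For the lower bound and uniqueness, I use the retraction $\pi : A_\Gamma \to A_2(2m)$ that kills every generator outside $\{a, b\}$. Applying $\pi$ to the identity $w =_G \hat{w}$ gives $b^t \pi(w') =_{A_2(2m)} a^s \pi(\hat{w}')$, and $|\pi(w)| \leq |w|$ with equality precisely when every letter of $w$ lies in $\{a, b\}^{\pm 1}$. It therefore suffices to prove both the length bound and the uniqueness inside $A_2(2m)$. There I argue by induction. For the base case $t = 1$, Corollary \ref{Corol 6} applied inside $A_2(2m)$ produces positions $l_1 < \cdots < l_s$ in $w$ with $sl(pr(w, l_i))$ starting with $a^i$; since $l_s \geq l_1 + (s - 1)$, the problem reduces to showing $l_1 \geq 2m$. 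The transition at $l_1$ from a shortlex not starting with $a$ to one starting with $a$ is, by Proposition \ref{Simpl}, realized by a single leftward lex reducing sequence; inspection of the $\tau$-moves in Definition \ref{taumoves} (with $\nu = \operatorname{Id}$ in the even case) shows the final critical word of this sequence has length at least $2m$, the minimum being attained by $(b, a)_{2m}$. The inductive step on $t$ peels off a leading $\Delta$ factor using the achievability identity.

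The main obstacle is the diagonal inductive step $s = t$ in the dihedral case: stripping off a single leading letter does not cleanly reduce the problem to a smaller instance of the lemma as stated, since the hypothesis requires $s \geq t$. Resolving this requires a more delicate argument exploiting the centrality of $\Delta$ and the canonical factorization of $\Delta^t$ in $A_2(2m)$. Uniqueness at equality is handled similarly: once the initial $2m$-letter block of $w$ is identified as $(b, a)_{2m}$, the classification of $\tau$-moves forces each subsequent letter (or alternating block, for $t > 1$) of $w$ to assume the exact form prescribed by $w_0$.
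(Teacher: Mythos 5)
Your proposal has two genuine gaps. First, the reduction to the dihedral group via the retraction $\pi\colon A_\Gamma\to A_2(2m)$ does not work as stated. Applying $\pi$ to $w=_G\hat w$ does give two words $b^t\pi(w')$ and $a^s\pi(\hat w')$ for $\pi(g)$ with the desired prefixes, but there is no reason these should be \emph{geodesic} in $A_2(2m)$: letters of $w'$ outside $\{a,b\}^{\pm1}$ may be separating occurrences of $a^{\pm1},b^{\pm1}$ that collapse after $\pi$, so the element $\pi(g)$ need not satisfy the hypothesis of the dihedral statement at all, and the lower bound cannot be transferred. Uniqueness certainly cannot be pulled back along $\pi$, since $\pi$ has enormous fibers. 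The paper does not reduce to the dihedral case; it works directly in $A_\Gamma$ with the Holt--Rees critical sequences, precisely because the sequence producing the representative $a^s\hat w'$ from $b^tw'$ could a priori pass through the other generators, and the argument must rule this out rather than assume it away.

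Second, you explicitly concede that the inductive step is unresolved, and that concession is where the real content of the lemma lives. The paper's induction sends $(s,t)$ to $(s-1,t-1)$ (so the diagonal case $s=t$ terminates at the base case $t=1$ with no special difficulty), but the step requires two nontrivial claims, each proved by a careful analysis of the possible forms of the last critical word of a leftward lex reducing sequence: (i) that $sl(pr(w,l_1-1))$ begins with the full power $b^t$, which is what forces the first critical word to be ${}_{2m-1}(a,b)b^t\xi$ and hence yields the length-$2m$ initial block $w=_G{}_{2m}(a,b)b^{t-1}\tilde w$; and (ii) that $sl(b^{t-1}\tilde w)$ begins with $a^{s-1}$, so that the induction hypothesis applies to the truncated word. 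Your base-case sketch silently uses a weak version of (i) ($t=1$), and nothing in the proposal substitutes for (ii); "peeling off a leading $\Delta$ factor" gives the achievability identity but not the fact that every minimizer must factor this way. The achievability computation via the central Garside element $\Delta$ is correct and matches the paper, but the lower bound and uniqueness remain unproved.
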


\begin{proof}
	
	We will only prove the case $s,t\geq 1$, the case $s,t\leq -1$ follows by symmetry (it is enough to consider $a^{-1}, b^{-1}$ as generators instead of $a,b$).
	
	Without loss of generality, we fix a lexicographic order whose first letters are $a<b<a^{-1}<b^{-1}$. Then, as we may assume $\hat{w}'\in W$ we have $sl(w)=\hat{w}=a^s \hat{w}'$.
	
	By Corollary \ref{Corol 6} there exists a prefix $pr(w,l_1)$ such that $sl(pr(w,l_1))=a\alpha'$ is a geodesic word such that $\alpha'$ doesn't have a geodesic representative beginning with $a$ and $sl(pr(w,l_1-1))$ doesn't begin with $a$. Therefore, since the order is $a<b<a^{-1}<b^{-1}$ and $w$ begins with $b$, $sl(pr(w,l_1-1))$ must also begin with $b$.

	We claim that $sl(pr(w,l_1-1))$ must begin with $b^t$. Suppose that this does not happen, then there exists $l_2<l_1$ such that $sl(w,l_2-1)$ begins with $b^t$ but $sl(w,l_2)$ begins with $b^r$ with $r<t$, where $b^r$ is the biggest prefix of $sl(pr(w,l_2))$ that is a power of $b$. Observe that by construction $pr(w,l_2-1)$ begins with $b^t$. The fact that this element has a geodesic representative beginning with $b^t$ and that our order is $a<b<a^{-1}$ implies that $b^r a$ must be a prefix of $sl(pr(w,l_2))$. Then there is a leftward lex reducing sequence $$sl(pr(w,l_2-1))w(l_2) \rightarrow sl(pr(w,l_2))=b^r a \gamma.$$
	
	So if $u_n \rightarrow\tau(u_n)$ is the last $\tau$-move in the leftward lex reducing sequence, we have that $f(\tau(u_n))=a$. So, we know by Proposition \ref{cons} that $u_n$ must be a positive critical word and it must have one of the following forms:
	$$u_n=\begin{cases}
	\text{either } &b^{t-r} \xi (t,z)_{2m}\hspace{0.3 cm} \text{if } t-r\geq 1, \{z,t\}=\{a,b\} \\
	
	\text{or } &(b,a)_{2m}\hspace{0.1cm} \xi  \hspace{0.8 cm} \text{if } t-r=1
	
	\end{cases}$$
	
	In both cases $u=b^ru_n$ is a prefix of a word representative of $pr(w,l_2)$. If we are in the first case, then $u=b^r u_n=b^t \xi (t,z)_{2m}$ is also a critical word and applying $\tau$, we obtain a geodesic representative of $pr(w,l_2)$ that begins with $a$, giving a contradiction.
	
	If we are in the second case, $u=b^r u_n=b^{t-1} (b,a)_{2m}\xi$, applying $\tau$ we get $\tau(u_n)=\xi (t,z)_{2m}$ with $\{z,t\}=\{a,b\}$. Therefore, $u=_G  b^{t-1} \xi(t,z)_{2m}w'$. But notice that $b^{t-1} \xi(t,z)_{2m}$ is a critical word and if we apply $\tau$ we get a geodesic representative of $pr(w,l_2)$ that begins with $a$ which is again impossible. So our claim that $sl(pr(w,l_1-1))$ must begin with $b^t$ is proved.
	
	Now, by Proposition \ref{Simpl} we know that there is a leftward lex reducing sequence that transforms $sl(pr(w,l_1-1))w(l_1)$ into $sl(pr(w,l_1))$. As $sl(pr(w,l_1-1))w(l_1)$ begins with $b^t$ and $sl(pr(w,l_1))$ begins with $a$, the sequence must involve the first letter of these words and if we let $r$ be the length of that sequence, $\tau(u_r)=$ $_{2m-1}(a,b) b^t\xi$. Therefore,
	\begin{equation}\label{eq2}
		w=_{G} [_{2m-1}(a,b)] b^t\tilde{w}=\hspace{0.1cm}_{2m}(a,b) b^{t-1} \tilde{w}.
	\end{equation}
	Besides, this (together with the lexicographic order) implies that $_{2m-1}(a,b) b^t=$ $_{2m}(a,b) b^{t-1}$ is a prefix of $sl(pr(w,l_1))$.
	
	Now we will argue by induction on $t$. If $t=1$, notice that $_{2m-1}(a,b) b^t$ has length $2m$ and since by Corollary \ref{Corol 6} we will need at least $s$ different prefixes, the length of a word satisfying this must be at least $2m+s-1$, that is exactly the length of $_{2m-1}(b,a)a^s=_{G} a^s\hspace{0.1cm} _{2m-1}(b,a)$. Obviously, there are no other words of this length satisfying the property.
	
	Let us suppose now that the result is true for $t<k$ and consider $t=k$. Since $t>1$, we have that also $s>1$. We already know that $w=_{G} $ $_{2m-1}(a,b) b^t\tilde{w}=\hspace{0.1cm}_{2m}(a,b) b^{t-1} \tilde{w}$.
	
	 Now we claim that $sl(b^{t-1}\tilde{w})$ begins with $a^{s-1}$. For suppose not. Then $sl(b^{t-1}\tilde{w})=a^{p}\tilde{w}'$ with $0\leq p<s-1$ where $\tilde{w}'$ has no geodesic representative that begins with $a$. Besides, we know that $$w=_{G} \text{   }_{2m-1}(a,b)b b^{t-1}\tilde{w}=_{G} a _{2m-2}(b,a)ba^p \tilde{w}'=_G a^{p+1} \underbrace{_{2m-1}(b,a)\tilde{w}'}_{\alpha}=_G a^s w'.$$ We define $\alpha$ to be $_{2m-1}(b,a)\tilde{w}'$, as indicated in the displayed equation. As $p+1<s$, $sl(\alpha)$ must begin with $a$, and so there must be a prefix $pr(\alpha, l'_1)$ such that $sl(pr(\alpha, l'_1))$ begins with $a$ but $sl(pr(\alpha, l'_1-1))$ begins with $b$ because our lexicographic order is $a<b<a^{-1}$. 
	
	Therefore, there exists a leftward lex reducing sequence 
	$$\underbrace{sl(pr(\alpha, l'_1-1))}_{b...}\alpha(l_1') \longrightarrow \underbrace{sl(pr(\alpha, l'_1))}_{a...}.$$
	Let us denote by $u_1,..., u_r$ the critical words of the sequence. As  $sl(pr(\alpha, l'_1-1))\alpha(l_1')$ begins with $b$ and $sl(pr(\alpha, l'_1))$ begins with $a$, the end of the sequence must involve the first letter of the words. Since $\tilde{w}'$ does not have any geodesic representative beginning with $a$, the last critical word of the sequence, $u_r$, must be of the form $\xi (b,a)_{2m}$ with $\xi= $ $_{2m-1} (b,a) \xi'$  ($\xi'$ possibly empty) and it must be a positive critical word. Notice that if $\xi'\neq\epsilon$, $f[\xi']=b$ because by the form of the critical word it must begin with $b$ or $a$. The latter case is impossible since $\tilde{w}'$ doesn't have a geodesic representative beginning by $a$. 
	So we are left with two cases 
	\begin{itemize}
		\item either $\xi'=\epsilon$ and $f[u_r]=b$ \\
		\item or  $\xi'\neq \epsilon$.
	\end{itemize}
	
	Then in both cases  $u_r'=\xi' (b,a)_{2m}$ is also a critical word, and $\tau(u_r')$ begins with $a$. Thus the sequence $u_1,...,u_{r-1}, u_r'$ is a reducing sequence for $\tilde{w}'$. After applying $\tau$ we get a geodesic representative beginning with $a$, yielding a contradiction. That is, $sl(b^{t-1}\tilde{w})$ has the form $a^{s-1}\tilde{w}'$, for some word $\tilde{w}'$. 
	
	Therefore, we have the words $$w=_{G}   [_{2m}(a,b)] b^{t-1} \tilde{w}=_{G} [_{2m}(a,b)] a^{s-1} \tilde{w}'=_{G} a^{s} [_{2m-1}(b,a)]\tilde{w}'.$$
	
	So, the minimal length is $2m+\rm ML\it(s-1,t-1)$ where $\rm ML\it(s-1,t-1)$ is  the minimal length of the case  with parameters $t-1$, $s-1$, and by induction hypothesis we know that $\rm ML\it(s-1,t-1)=s-1+(t-1)(2m-1)$ and the only words satisfying this are $b^{t-1}[_{2m-1}(a,b)]^{t-1}a^{s-t}=_G a^{s-1}[_{2m-1}(b,a)]^{t-1}$.
	
	Thus, for parameters $t$, $s$, the minimal length is $2m+\rm ML\it(s-1,t-1)=2m+s-1+(t-1)(2m-1)=s+t(2m-1)$ and the only words of this length satisfying the conditions on the statement are $b^{t}[_{2m-1}(a,b)]^{t}a^{s-t}=_{G} a^{s} [_{2m-1}(b,a)]^{t}$ as we wanted.
	\end{proof}

\begin{lem}\label{Lema 8}
	In a large even Artin group we cannot have two geodesic words $a^s w_1 =_{G} b^{-t} w_2$, $s,t\geq 2$.
\end{lem}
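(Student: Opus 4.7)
The plan is to prove Lemma \ref{Lema 8} by contradiction. Suppose that geodesic words $a^s w_1 =_G b^{-t} w_2$ exist with $s, t \geq 2$. Using Lemma \ref{Lema 2} and the freely reduced condition, I may assume that $s$ and $t$ are maximal in these factorizations, so $w_1(1) \notin \{a, a^{-1}\}$ and $w_2(1) \notin \{b, b^{-1}\}$. I fix a lexicographic order with $a$ as the smallest letter and assume $w_1 \in W$, so that $sl(a^s w_1) = a^s w_1$ is also the shortlex representative of $w := b^{-t} w_2$.

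First I would apply Corollary \ref{Corol 6} to the geodesic word $w$ to produce prefixes $pr(w, l_1) < \cdots < pr(w, l_s)$ whose shortlex forms begin with $a^i$ but not $a^{i+1}$. Since $pr(w, l) = b^{-l}$ has shortlex $b^{-l}$ for every $l \leq t$, necessarily $l_1 > t$. By Proposition \ref{Simpl}, a leftward lex reducing sequence transforms $sl(pr(w, l_1 - 1)) \cdot w(l_1)$ into $sl(pr(w, l_1))$. Because in any leftward sequence the critical subwords migrate leftward, only the last critical word $u_k$ can change the first letter of the whole word: thus $\alpha_k = \epsilon$ and $f[u_k]$ is the (unchanged) first letter of $sl(pr(w, l_1 - 1))$. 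In the main case $l_1 = t + 1$, this first letter is $b^{-1}$.

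By Proposition \ref{cons}(3)--(4), having $f[u_k] = b^{-1}$ and $f[\tau(u_k)] = a$ forces $u_k$ to be an unsigned critical word of the form $u_k = {}_n(b^{-1}, a^{-1}) \eta (z, t)_p$ with $n + p = 2m$ and $n, p \geq 1$, so $|u_k| = |\eta| + 2m$. Since $u_k$ is a prefix of a word of length $t + 1$, we get $t \geq 2m - 1 \geq 3$, which already rules out the case $t = 2$. For $t \geq 2m - 1$, the required shape of $u_k$ (an alternating $b^{-1} a^{-1} \cdots$ prefix of length $n \geq 2$, or a long positive block $(z,t)_{2m-1}$ when $n = 1$) is incompatible with the step-$k$ word being obtained from $b^{-t} w_2(1)$ via length-preserving $\tau$-moves acting on 2-generator critical subwords: such moves cannot manufacture the required alternation or positive block from a word dominated by $b^{-1}$'s followed by a single non-$b$ letter. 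The case $l_1 > t + 1$ is handled by iterating the same analysis on shorter prefixes, or by invoking Lemma \ref{Lema 7} on an auxiliary configuration.

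The main obstacle is the combinatorial bookkeeping of the intermediate words through the leftward $\tau$-sequence. Specifically, one must verify that the particular structure forced on $u_k$ by Definition \ref{critical words} and Proposition \ref{cons} cannot be produced from the nearly-uniform prefix $b^{-t}$ via $\tau$-moves confined to 2-generator dihedral subgroups. This systematic case analysis, which tracks how signs and names of letters can migrate under each $\tau$-move, mirrors the technique used in the proof of Lemma \ref{Lema 7}.
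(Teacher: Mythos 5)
There is a genuine gap. Your setup (reduction via Lemma \ref{Lema 2}, choice of lexicographic order, Corollary \ref{Corol 6} and Proposition \ref{Simpl} to locate the first prefix whose shortlex form starts with $a$, and Proposition \ref{cons} to pin down the last critical word) matches the paper's toolkit, and your ``main case'' $l_1=t+1$ is essentially sound: there the last critical word must be unsigned of the form ${}_n(b^{-1},a^{-1})\eta(z,t)_p$, hence of length at least $2m\geq 4$, which cannot fit inside a word of length $t+1=3$ once one has normalized to $s=t=2$. Note, however, that your maximality assumption on $s$ and $t$ is incompatible with that normalization; you should drop maximality and reduce to $s=t=2$ outright (as the paper does), since $a^{2}(a^{s-2}w_1)=_G b^{-2}(b^{-(t-2)}w_2)$ exhibits the same pair of geodesics, and without $t=2$ the bound $t\geq 2m-1$ yields no contradiction.

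The real content of the lemma lies in the case you defer, $l_1>t+1$, i.e.\ when the shortlex prefixes pass through intermediate forms such as $b^{-1}a\cdots$ before reaching $a\cdots$ and eventually $a^{2}\cdots$. Your claim that the required critical word ``cannot be manufactured'' from a word dominated by powers of $b^{-1}$ is precisely the assertion that needs proof, and you concede in your last paragraph that this verification is not carried out. The paper closes this case by a different device: it takes a counterexample $g$ of minimal geodesic length, fixes the order $a<b<b^{-1}<a^{-1}$ so that every prefix's shortlex form begins with $a$ or $b^{-1}$, analyzes the transition prefix $sl(pr(w,j))$ (beginning with $a$ or with $b^{-1}a$) to conclude in both cases that $sl(pr(w,n-1))$ begins with $ab$ (minimality forbids $a^{2}$), and then observes that the last critical word $u_{r'}$ of the final reducing sequence is a positive critical word beginning with $b$ for which $au_{r'}$ is again critical; applying $\tau$ to $au_{r'}$ produces a geodesic representative of $g$ beginning with $b$, contradicting Lemma \ref{Lema 1} because $g$ also has a geodesic representative beginning with $b^{-1}$. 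None of these ingredients --- the minimal counterexample, the $ab$-prefix claim, or the extension trick $u_{r'}\mapsto au_{r'}$ --- appears in your proposal, so as written the proof is incomplete.
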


\begin{proof}
	We may assume $s=t=2$. We argue by contradiction. Let $T$ be the set of elements of $G$ that have geodesic representatives $w'=a^2w_1 =_{G} b^{-2}w_2=w$ and assume $T\neq\emptyset$. Let $g\in T$ be an element of minimal geodesic length represented by $w=_G w'$ as before.
	
	Without loss of generality we consider the lexicographic order $a<b<b^{-1}<a^{-1}<...$. By the choice of the order we can suppose that $w'=sl(w)$. Besides, also by the choice of the order and the fact that $w$ is geodesic together with Lemma \ref{Lema 1} we deduce that if $sl(pr(w,i))$, $1\leq i \leq |g|$, doesn't begin with $b^{-1}$ it must begin with $a$.
	
	Let $n=|g|$, we have $sl(pr(w,n))=w'$. Since $g$ is minimal we know that $sl(pr(w,n))$ is the first prefix in the series beginning with $a^2$ (in other case there would be a prefix $\alpha$ of $w$ shorter than $w$ and such that $\alpha\in T$). Let $sl(pr(w,j))$ be the first prefix which doesn't begin with $b^{-2}$. Then, $sl(pr(w,j-1))$ begins with $b^{-2}$ and $sl(pr(w,j))$ may begin with $a$ or by $b^{-1}a$ (by the choice of the lexicographic order). Let us see that in both cases, we have that $sl(pr(w,n-1))$ must begin with $ab$.
	
	By Theorem \ref{Simpl} there is a leftward lex reducing sequence that transforms $sl(pr(w,j-1))w(j)$ into $sl(pr(w,j))$.
	
	\begin{enumerate}
		
		\item Let us suppose that $sl(pr(w,j))$ begins with $a$, then we know that the last critical word of the reducing sequence must involve the first letter. Let $r$ be the length of the reducing sequence, then $sl(pr(w,j))$ begins with $\tau(u_r)$. Then $u_r$ is a critical word in $a,b$ beginning with $b^{-2}$ and such that $\tau(u_r)$ begins with $a$. Let $2m$ be the label between $a$ and $b$, therefore: $u_r= b^{-1} b^{-1}\xi' (t,z)_{2m-1}$ where $\{t,z\}=\{a,b\}$. Thus, $\tau(u_r)= \hspace{0.1cm}_{2m-1}(a,b) b^{-1}\xi' t^{-1}$ where $t\in\{a,b\}$ so $sl(pr(w,j))$ begins with $ab$.
		
		Therefore, since by minimality $sl(pr(w,n-1))$ cannot begin with $a^2$, it begins with $ab$ (since $sl(pr(w,j))$ begins with $ab$ and $b$ is the second letter in the lexicographic order).

		\item Assume now that $sl(pr(w,j))$ begins with $b^{-1}a$. Recall that $sl(pr(w,j-1))$ begins with $b^{-2}$. Then we know that there exists a  leftward lex reducing sequence transforming $sl(pr(w,j-1))w(j)$ into $sl(pr(w,j))$, and that the last critical word of the reducing sequence must involve the second letter of the word and not the first one (i.e. every letter of the word except the first one belongs to the reducing sequence). Let $r$ be the length of the reducing sequence, then $sl(w,j)$ begins with $b^{-1}\tau(u_r)$. Then $u_r$ is a critical word in $a,b$ beginning with $b^{-1}$ and such that $\tau(u_r)$ begins with $a$, therefore: $u_r= \hspace{0.1cm}_{n}(b^{-1},a^{-1}) \xi (t,z)_{p}$ where $\{t,z\}=\{a,b\}$, $p+n=2m$. Thus, $\tau(u_r)= \hspace{0.1cm} _{p}(a,b) \xi (t^{-1},z^{-1})_n$.
		
		By Corollary \ref{Corol 6} there exists $k$ with $j<k<n$ such that $sl(pr(w,k))$ begins with $a$ and $sl(pr(w,k-1))$ begins with $b^{-1}a$ (by the chosen lexicographic order and Lemma \ref{Lema 1}). We know by Theorem \ref{Simpl} that there exists a leftward lex reducing sequence that transforms $sl(pr(w,k-1))w(k)$ into $sl(pr(w,k))$, let $d$ be the length of this sequence. Then, since the first letter is changed, $sl(pr(w,k))$ must begin with $\tau(u_d)$, besides recall that $sl(pr(w,k-1))$ begins with $b^{-1}a$. Therefore, $u_d$ has one of the following forms:
			$$u_{d}=\begin{cases}
		\text{either } &b^{-1} (a\xi') \hspace{0.1cm}_{2m-1}(t,z), \hspace{0.5 cm} \{t,z\}=\{a,b\} \\
		
		\text{or } &b^{-1}\hspace{0.1cm} _{2m-1}(a,b) \hspace{0.3 cm} 
		
		\end{cases}$$
		
		So, then:		
		$$\tau(u_{d})=\begin{cases}
		\text{either } &_{2m-1}(a,b) (a\xi') t^{-1},  \hspace{0.5 cm}  t\in\{a,b\}\\
		
		\text{or } &_{2m-1}(a,b) b^{-1} \hspace{0.3 cm} 
		
		\end{cases}$$
		
		Therefore, $sl(pr(w,k))$ begins with $ab$. Moreover $sl(pr(w,n))$ begins with $a^2$ and $sl(pr(w,n-1))$ doesn't by minimality. Therefore, $sl(pr(w,n-1))$ must also begin with $ab$ (since $b$ is the second letter in the lexicographic order).
		
	\end{enumerate}
	
	So we have that $sl(pr(w,n))$ begins with $a^2$ and $sl(pr(w,n-1))$ begins with $ab$. By Theorem \ref{Simpl} we know that there is a leftward lex reducing sequence that transforms $sl(pr(w,n-1))w(n)$ into $sl(pr(w,n))$, and if we call $r'$ to the length of that sequence $sl(pr(w,n))=a\tau(u_{r'})$. Therefore, by Proposition \ref{cons} $u_{r'}$ is a positive critical word beginning with $b$. Then
	$$\tau(u_{r'})=\begin{cases}
	\text{either } &a \xi'\hspace{0.1cm} _{2m}(t,z),  \hspace{0.5 cm}  \{z,t\}=\{a,b\}\\
	
	\text{or } &_{2m}(a,b) \xi \hspace{0.3 cm} 
	
	\end{cases}$$
	
	Thus:
	$$u_{r'}=\begin{cases}
	\text{either } &_{2m}(b,a) a\xi' \hspace{0.3 cm}  \\
	
	\text{or } &\xi \hspace{0.1cm}_{2m}(t,z), \hspace{0.5 cm} \{z,t\}=\{a,b\}, 
	
	\end{cases}$$
	
	In both cases, $au_{r'}$ is also a positive critical word beginning with $a$, therefore applying $\tau$ to this critical word instead of applying it to $u_{r'}$ in the last step, we would obtain a geodesic representative of $w$ beginning with $b$ (by Proposition \ref{cons}). But $w$ begins with $b^{-1}$ and this is impossible by Lemma \ref{Lema 1}.
\end{proof}

\begin{lem}\label{Lema 9}
	In a large even Artin group, let $w=b^tw'$ (resp. $w=a^{-t}\hat{w}'$), $\hat{w}=a^{-1}\hat{w}'$ (resp. $\hat{w}=bw'$), $ t\geq 1$ be geodesic words such that $w=_G \hat{w}$. Let $2m$ be the label between $a$ and $b$. The minimal length of such a word $w$ is $|w|=t+(2m-1)$. Moreover, the only words $w$ of this length satisfying that are: $w=b^t\hspace{0.1cm}_{2m-1}(a^{-1},b^{-1})=_G$ $ _{2m-1}(a^{-1},b^{-1}) b^t$ (resp. $w=a^{-t}\hspace{0.1cm}_{2m-1}(b,a)=_G$ $_{2m-1}(b,a)a^{-t}$).
\end{lem}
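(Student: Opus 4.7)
By the symmetry $(a,b)\leftrightarrow(a^{-1},b^{-1})$ it suffices to treat the first case, $w=b^tw'$ and $\hat{w}=a^{-1}\hat{w}'$. Fix a lexicographic order beginning $a^{-1}<b<a<b^{-1}<\cdots$. Since $\hat{w}$ starts with the smallest letter, $sl(w)$ does too and I may take $\hat{w}=sl(w)$. By Remark \ref{Obs} there is a first prefix $pr(w,j)$ whose shortlex begins with $a^{-1}$, and by Lemma \ref{Lema 1} together with the chosen order $sl(pr(w,j-1))$ must begin with $b$. Copying almost verbatim the contradiction argument in the proof of Lemma \ref{Lema 7} (suppose the leading $b$-count drops earlier at some $pr(w,l_2)$, analyse the last critical word $u_n$ of the associated leftward lex reducing sequence, and extend it on the left by the missing $b$'s to obtain a positive critical word whose $\tau$-image starts with $a$, contradicting the choice of $j$), I conclude that $sl(pr(w,j-1))$ begins with $b^t$.

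Proposition \ref{Simpl} then supplies a leftward lex reducing sequence from $sl(pr(w,j-1))w(j)$ to $sl(pr(w,j))$. Its last critical word $u_r$ must cover position $1$ (that is, $\alpha_r=\epsilon$), since the first letter is changed. By Proposition \ref{cons}, $f[u_r]=b$ and $f[\tau(u_r)]=a^{-1}$ differ in both name and sign, forcing $u_r$ to be an \emph{unsigned} critical word of the form ${}_p(b,a)\,\eta\,(c_1^{-1},c_2^{-1})_n$ with $p+n=2m$, $p,n\geq 1$ and $\{c_1,c_2\}=\{a,b\}$ (I use $c_1,c_2$ to avoid clashing with the exponent $t$ of the lemma). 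The corresponding $\tau$-move, together with $\nu=\mathrm{Id}$ in the even case, gives $\tau(u_r)={}_n(a^{-1},b^{-1})\,\eta\,(c_2,c_1)_p$, which begins with $a^{-1}$ as required.

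The length bound is obtained by induction on $t$, following the template of the inductive step in Lemma \ref{Lema 7}. The base case $t=1$ is immediate: $|u_r|\geq p+n=2m$, so $|w|\geq j\geq |u_r|+|\beta_r|\geq 2m=t+(2m-1)$. For $t\geq 2$, using the identity $pr(w,j)=_G\tau(u_r)\beta_r$ one extracts, exactly as in the analogous passage of Lemma \ref{Lema 7}, a geodesic subword of the form $b^{t-1}\tilde{w}$ admitting an $a^{-1}$-leading geodesic representative, to which the inductive hypothesis applies; adding back the letters consumed by the first critical transformation yields $|w|\geq t+(2m-1)$. In the equality case $|w|=t+(2m-1)$, the count forces $\eta=b^{t-1}$ (hence $p=1$) and $\beta_r=\epsilon$; free-reducedness of $u_r$ together with $n=2m-1$ being odd then determines $c_1=b$, $c_2=a$, giving $u_r=b^t\cdot{}_{2m-1}(a^{-1},b^{-1})$ and $w=u_r$, as claimed.

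Finally, the identity $b^t\cdot{}_{2m-1}(a^{-1},b^{-1})=_G{}_{2m-1}(a^{-1},b^{-1})\cdot b^t$ follows because, in an even Artin group, both $b\cdot{}_{2m-1}(a,b)$ and ${}_{2m-1}(a,b)\cdot b$ equal $(ab)^m=(ba)^m$; passing to inverses gives the commutation of $b$ with ${}_{2m-1}(a^{-1},b^{-1})$, which iterates to $b^t$. The principal technical obstacle is the inductive reduction: one must verify carefully that the single $\tau$-move on $u_r$ genuinely produces a fresh instance of the hypothesis with parameter $t-1$ and track lengths through the reduction. This is exactly the point where Lemma \ref{Lema 7}'s argument concentrates its technical subtleties, and the present proof is expected to mirror them.
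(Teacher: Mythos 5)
Your argument is correct and rests on the same underlying idea as the paper's proof: the change of leading letter from $b$ to $a^{-1}$ must be effected by an unsigned critical word of the form ${}_{1}(b,a)\,\eta\,(c_1^{-1},c_2^{-1})_{2m-1}$ that absorbs the entire prefix $b^t$, forcing $|w|\geq t+(2m-1)$ with equality only for $b^t\,{}_{2m-1}(a^{-1},b^{-1})$, whose $\tau$-image is ${}_{2m-1}(a^{-1},b^{-1})\,b^t$. The paper's own proof is a two-sentence sketch of exactly this observation; your version supplies, via the template of Lemma \ref{Lema 7}, the details the paper leaves implicit.
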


\begin{proof}
	Obviously, a word $w$ like that must begin with $b^t$ (resp. $a^{-t}$ ) and since it can be transformed, must contain a critical word, so it must have at least length $t+2m-1$.
	
	The only way to get a word like that of this length is considering the shortest possible critical word, i.e. $w=b^t\hspace{0.1cm}_{2m-1}(a^{-1},b^{-1})$ (resp. $a^{-t}\hspace{0.1cm}_{2m-1}(b,a)$).
\end{proof}

Finally, we will need the following result which is equivalent to Proposition 4.5 (3) in \cite{Holt}.

Given a set $A$, we will use $\#A$ to denote the cardinal of the set.

\begin{lem}\label{Initials}
	Let us consider the large even Artin group $A_{\Gamma}$. Given an element $g\in A_{\Gamma}$  $$\#\{v\in V(\Gamma)^{\pm}\mid \text{exists } w \text{ geodesic representative of } g \text{ such that } f[w]=v\}\leq 2.$$
	
\end{lem}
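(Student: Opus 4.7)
I would argue by contradiction. Suppose $g$ admits geodesic representatives whose first letters have three distinct names $a,b,c \in V(\Gamma)$. By Lemma \ref{Lema 1}, for each name only one sign can appear as the first letter of a geodesic of $g$, so we obtain specific geodesic representatives $v_\ell = \ell^{\epsilon_\ell} u_\ell$ for $\ell \in \{a,b,c\}$ with $\epsilon_\ell \in \{\pm 1\}$.

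Fix a lexicographic order placing $a^{\epsilon_a}$ as the smallest letter; then $sl(g) = a^{\epsilon_a} w_0$ for some word $w_0$. For each $\ell \in \{b,c\}$, I would iteratively apply Proposition \ref{Simpl} to track the shortlex form $sl(pr(v_\ell, k))$ as $k$ grows, locating the smallest index $k_\ell$ with $sl(pr(v_\ell, k_\ell))$ beginning with $a^{\epsilon_a}$. By Proposition \ref{Simpl}, a single leftward lex reducing sequence transforms $sl(pr(v_\ell, k_\ell-1)) \cdot v_\ell(k_\ell)$ into $sl(pr(v_\ell, k_\ell))$. Because in a leftward critical sequence the critical subword strictly moves leftward at each step, the first letter of the whole word can change only at the step where the critical subword is actually a prefix, after which no further leftward move is possible. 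Hence the last critical word $u^{(\ell)}$ of the sequence is a prefix, $\tau(u^{(\ell)})$ starts with $a^{\epsilon_a}$, and by Proposition \ref{cons}(3) the first letter of $u^{(\ell)}$ has a name $d_\ell \neq a$; in particular $u^{(\ell)} \in A_2(m_{a,d_\ell})$ and $a$ is connected to $d_\ell$ in $\Gamma$.

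The main step, and the principal obstacle, is to identify $d_b = b$ and $d_c = c$. The idea is to show, by induction on $k \leq k_\ell$, that the first letter of $sl(pr(v_\ell, k))$ has name $\ell$ throughout the range $k < k_\ell$. This uses the chosen lexicographic order (which places $a^{\pm\epsilon_a}, \ell^{\pm\epsilon_\ell}$ as the four smallest letters), Lemma \ref{Lema 1} (to rule out oscillation between $\ell^{\epsilon_\ell}$ and $\ell^{-\epsilon_\ell}$ along the Simpl iterations), and a case analysis on which $\tau$-moves can alter the first letter of an intermediate shortlex form. The symmetric argument, carried out with $b^{\epsilon_b}$ (resp.\ $c^{\epsilon_c}$) smallest, shows moreover that $b$ and $c$ are connected in $\Gamma$, and yields an analogous critical prefix $u^{(a')} \in A_2(m_{b,c})$ inside another geodesic rewriting of $g$.

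Finally, with critical prefixes $u^{(b)} \in A_2(m_{a,b})$, $u^{(c)} \in A_2(m_{a,c})$, and $u^{(a')} \in A_2(m_{b,c})$ located inside various geodesic rewritings of $g$, I would compare their lengths and explicit shapes using Lemmas \ref{Lema 7} and \ref{Lema 9} (the sign pattern $(\epsilon_a,\epsilon_b,\epsilon_c)$ dictates which of the two applies in each pairwise comparison, while Lemma \ref{Lema 8} excludes outright the mixed-sign high-power case). The expected contradiction is a length/structural incompatibility: the three pairwise rewritings cannot simultaneously be initial segments of a single geodesic word representing $g$, because each forces an initial segment of a specific minimal form on $g$, and those forms cannot coexist. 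I anticipate this final sign-by-sign comparison to be the most technical part; it is also worth noting that the statement coincides with Proposition 4.5(3) of \cite{Holt}, so one may alternatively invoke that result directly.
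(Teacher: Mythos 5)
The paper offers no proof of Lemma \ref{Initials} at all: it merely remarks that the statement is equivalent to Proposition 4.5\,(3) of \cite{Holt} and leaves the verification to that reference. Your closing sentence --- invoke that result directly --- is therefore exactly the paper's approach, and on that reading your proposal matches it.

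Taken as a self-contained argument, however, the sketch preceding that sentence has two genuine gaps, both of which you flag yourself. First, the identification $d_b=b$ and $d_c=c$ is only announced as ``the main step'': the inductive claim that the first letter of $sl(pr(v_\ell,k))$ keeps the name $\ell$ for all $k<k_\ell$ is not established, and it is not obviously safe --- nothing a priori prevents the shortlex form of an intermediate prefix from acquiring a first letter with a fourth name before dropping to $a^{\epsilon_a}$, and excluding that possibility is uncomfortably close to the very statement being proved (a bound on how many names occur as first letters of geodesics of a single element). An induction on $|g|$ might repair this, but you do not set one up. Second, the concluding contradiction is only anticipated: you assert that the three pairwise rewritings ``cannot coexist'' but give no actual length or structural computation from Lemmas \ref{Lema 7}, \ref{Lema 8} and \ref{Lema 9} that produces the incompatibility; note also that those lemmas constrain pairs of geodesic representatives with \emph{power} prefixes $a^s$, $b^t$ with $s,t$ of absolute value at least $1$ in specific sign configurations, and it is not clear they cover every sign pattern $(\epsilon_a,\epsilon_b,\epsilon_c)$ that can arise here. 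As written, the sketch is a plan rather than a proof; the only complete route you offer is the citation, which coincides with what the paper does.
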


\section{Technical key result}\label{S3}

In this section we will find a presentation for the kernel of the map $\psi: A_{\Gamma} \rightarrow A_{\Gamma\setminus\{r\}}$ induced by $r\mapsto 1$ and $v\mapsto v$ if $v\neq r$. Notice that this is well defined because our group is an even Artin group.

\begin{notat}
Let $\Gamma$ be a simple labeled graph with even labels. Let $r\in V(\Gamma)$. We will consider the Artin groups $G=A_{\Gamma}$ and $G_1=A_{\Gamma\setminus\{r\}}$. We will denote by $b_{1},...,b_{n}$ the vertices connected to $r$ with label $2k_j$, $k_j>1$, $1\leq j\leq n$ and by  $c_1,...,c_{k}$ the vertices non-connected to $r$. 
\end{notat}

\begin{mydef}\label{parametros}
For $i=1,...,n$ we define the following integer numbers:
\begin{align}
&p_i^+=\left \lfloor {\frac{k_i}{2}} \right \rfloor +1 \nonumber\\
&n_i^-=-\left(\left \lfloor {\frac{k_i-1}{2}} \right \rfloor+1\right)\nonumber\\
&p_i^-=\left \lfloor {\frac{k_i}{2}} \right \rfloor+1-k_i=p_i^+-k_i\nonumber\\
&n_i^+=k_i-\left(\left \lfloor {\frac{k_i-1}{2}} \right \rfloor+1\right)=k_i+n_i^-\nonumber
\end{align}
where $\left\lfloor{x}\right\rfloor$ denotes the integer part of $x$.
\end{mydef}

\begin{obs}\label{obs1}
	
	The following properties are satisfied for large even Artin groups (every $k_i\geq 2$):
	\begin{enumerate}
	\item $p_i^+\geq 2$, $n_i^+> 0$, $n_i^-<0, p_i^-\leq 0$.
	\item $p_i^+>|p_i^-|$
	\item $|n_i^-|\geq n_i^+$ and the equality holds if and only if $k_i$ is even.
	\item $p_i^+\geq |n_i^-|$ and the equality holds if and only if $k_i$ is odd.
	\end{enumerate}
\end{obs}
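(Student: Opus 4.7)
My plan is to prove this remark by a direct case analysis on the parity of $k_i$, since each of the four assertions is purely an arithmetic statement about floor functions once the definitions are unpacked. The hypothesis $k_i \geq 2$ (the large condition, since the label $2k_i$ satisfies $2k_i \geq 4$) is the only input needed, so no group theory enters the argument.

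First I would write $k_i = 2\ell$ in the even case and $k_i = 2\ell+1$ in the odd case, with $\ell \geq 1$ in both cases. Plugging into the definitions gives the explicit values
\[
\text{(even)}\quad p_i^+=\ell+1,\ n_i^-=-\ell,\ p_i^-=1-\ell,\ n_i^+=\ell,
\]
\[
\text{(odd)}\quad p_i^+=\ell+1,\ n_i^-=-(\ell+1),\ p_i^-=-\ell,\ n_i^+=\ell.
\]
The key computation is just $\lfloor(2\ell-1)/2\rfloor = \ell-1$ in the even case and $\lfloor 2\ell/2\rfloor = \ell$ in the odd case.

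With these tables in hand, all four claims in the remark reduce to trivial inequalities. For (1) one checks $\ell+1 \geq 2$ and the signs of the remaining quantities. For (2) one observes $|p_i^-| = \ell-1$ (even) or $\ell$ (odd), and in both cases this is strictly less than $p_i^+ = \ell+1$. For (3) one computes $|n_i^-|-n_i^+ = 0$ in the even case and $=1$ in the odd case, giving equality exactly when $k_i$ is even. For (4) one computes $p_i^+ - |n_i^-| = 1$ in the even case and $=0$ in the odd case, giving equality exactly when $k_i$ is odd.

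I do not anticipate a genuine obstacle here; this is a routine verification. The only mild care needed is in handling the floor function consistently in the two parities, and in remembering that $k_i\geq 2$ ensures $\ell\geq 1$ so that $p_i^+\geq 2$ and $n_i^+\geq 1$ (the strict inequalities in (1) beyond the sign conditions). Once the tables above are written out, the proof is a two-line case check and can be presented very compactly.
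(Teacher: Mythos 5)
Your proposal is correct: the explicit tables for $k_i=2\ell$ and $k_i=2\ell+1$ (with $\ell\geq 1$) are computed accurately from Definition \ref{parametros}, and all four assertions follow immediately. The paper states this remark without proof precisely because it is this routine parity check, so your argument is exactly the intended verification.
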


\begin{lem}\label{numeritos}
	We have:
	\begin{enumerate}
		\item $p_i^--1=n_i^-$
		\item $p_i^+-1=n_i^+$
	\end{enumerate}
\end{lem}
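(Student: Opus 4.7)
The plan is entirely computational: both identities follow immediately by unfolding the definitions in Definition \ref{parametros} and then verifying a single arithmetic identity about the floor function. There is no geometric or combinatorial content here — the lemma is just a bookkeeping statement about the four integers attached to each $k_i$, presumably included for later reference when manipulating these exponents.

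The approach I would take is to reduce both claims to the elementary identity
\[
\left\lfloor \frac{k_i}{2} \right\rfloor + \left\lfloor \frac{k_i-1}{2} \right\rfloor = k_i - 1,
\]
which holds for every integer $k_i \geq 1$. Indeed, for part (1), substituting $p_i^- = \lfloor k_i/2 \rfloor + 1 - k_i$ and $n_i^- = -\lfloor (k_i-1)/2 \rfloor - 1$ into $p_i^- - 1 = n_i^-$ and rearranging produces exactly the displayed identity. For part (2), substituting $p_i^+ = \lfloor k_i/2 \rfloor + 1$ and $n_i^+ = k_i - \lfloor (k_i-1)/2 \rfloor - 1$ into $p_i^+ - 1 = n_i^+$ again yields the same identity.

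The identity itself is checked by a two-line parity split: if $k_i = 2m$ then $\lfloor k_i/2 \rfloor = m$ and $\lfloor (k_i-1)/2 \rfloor = m-1$, summing to $2m-1 = k_i - 1$; if $k_i = 2m+1$ then both floors equal $m$, summing to $2m = k_i - 1$. Since the hypothesis of the paper gives $k_i \geq 2$, either case applies.

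There is no real obstacle; the only thing to be careful about is bookkeeping signs correctly, since $n_i^-$ and $p_i^-$ are defined as non-positive quantities while $p_i^+$ and $n_i^+$ are positive. After the substitution, the equation is the same in both parts, so effectively only one calculation is needed.
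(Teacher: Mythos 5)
Your proof is correct and is essentially the same direct computation as the paper's: both reduce to elementary floor-function arithmetic (the paper verifies (1) by the same substitution and then deduces (2) from (1) via $p_i^+-p_i^-=n_i^+-n_i^-=k_i$, whereas you check both parts against the single identity $\lfloor k_i/2\rfloor+\lfloor (k_i-1)/2\rfloor=k_i-1$). The parity split you give is valid and the difference is purely cosmetic.
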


\begin{proof}
	\hspace{0.5 cm}
	\begin{enumerate}
		\item $p_i^--1=\left(\left\lfloor{\frac{k_i}{2}}\right\rfloor +1-k_i\right)-1=\left\lfloor{\frac{k_i}{2}}\right\rfloor-k_i=-\left(\left\lfloor{\frac{k_i-1}{2}}\right\rfloor+1\right)=n_i^-$.
		\item
	Notice that $p_i^+-p_i^-=n_i^+-n_i^-=k_i$. Then, $p_i^+-n_i^+=p_i^--n_i^-$, and by the previous point, we have that $p_i^--n_i^-=1$. Therefore, $p_i^+-n_i^+=1$ as we wanted to prove.
	\end{enumerate}
\end{proof}

\begin{mydef}\label{omegas}
	We will consider the following sets of elements of $G$:
	\begin{align}
	& \Omega_i^+=\left\{g\in G \mid  g \text{ has a geodesic representative beginning with } b_i^{p_i^+}\right\}\nonumber \\
	& \Omega_i^-=\left\{g\in G \mid g \text{ has a geodesic representative beginning with } b_i^{n_i^-}\right\}\nonumber 
	\end{align}
	for $i=1,...,n$.
\end{mydef}

\begin{obs}\label{obs2}
	
	We have $\Omega_i^+\cap \Omega_i^-=\emptyset$ by Lemma \ref{Lema 1}.

\end{obs}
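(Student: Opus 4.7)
The plan is to derive the disjointness directly from Lemma \ref{Lema 1}, which forbids an element of a large Artin group from having two geodesic representatives whose first letters are a generator and its inverse. I would proceed by contradiction: suppose $g \in \Omega_i^+ \cap \Omega_i^-$, so that there exist geodesic words $w_+$ and $w_-$ with $w_+ =_G g =_G w_-$, where $w_+$ begins with the power $b_i^{p_i^+}$ and $w_-$ begins with the power $b_i^{n_i^-}$.

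Next I would read off the very first letters of these two words. By Remark \ref{obs1}(1) we have $p_i^+ \geq 2 > 0$, so the initial letter of $w_+$ is the positive letter $b_i$, while $n_i^- < 0$, so the initial letter of $w_-$ is the negative letter $b_i^{-1}$. Thus $w_+ = b_i \, w_1$ and $w_- = b_i^{-1} \, w_2$ for suitable suffixes $w_1,w_2$, both still geodesic as they represent the same element $g$.

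Applying Lemma \ref{Lema 1} with $a = b_i$, this situation is impossible: there are no two geodesic words $a w_1$ and $a^{-1} w_2$ representing the same group element. This contradiction shows $\Omega_i^+ \cap \Omega_i^- = \emptyset$. I do not expect any real obstacle; the remark is an immediate consequence of Lemma \ref{Lema 1}, and the only content beyond invoking it is checking that $p_i^+$ and $n_i^-$ genuinely have opposite signs, which is recorded in Remark \ref{obs1}.
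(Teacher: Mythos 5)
Your proof is correct and is exactly the argument the paper intends: the remark simply cites Lemma \ref{Lema 1}, and your elaboration (reading off the first letters $b_i$ and $b_i^{-1}$ from $p_i^+>0$ and $n_i^-<0$ via Remark \ref{obs1}) fills in the only detail needed. No discrepancy with the paper.
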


Consider an Artin relation of even type of the form $(rb_i)^{k_i}=(b_ir)^{k_i}$. We can rewrite it as follows:
$$r^{b_i^{k_i}}=r^{b_i^{k_i-1}}...r^{b_i}r\left(r^{b_i}\right)^{-1}...\left(r^{b_i^{k_i-1}}\right)^{-1}$$
$$r^{-b_i}=r^{-1}\left(r^{b_i}\right)^{-1}...\left(r^{b_i^{k_i-2}}\right)^{-1}r^{b_i^{k_1-1}}r^{b_i^{k_i-2}}...r^{b_i}r$$

From here we can easily see that these expressions are equivalent to:
$$r^{b_i^{p_i^+}}=r^{b_i^{p_i^+-1}}...r^{b_i^{p_i^-}}...\left(r^{b_i^{p_i^+-1}}\right)^{-1}$$
$$r^{b_i^{n_i^-}}=\left(r^{b_i^{n_i^-+1}}\right)^{-1}...r^{b_i^{n_i^+}}...r^{b_i^{n_i^-+1}}$$

Taking these relations as inspiration, we define the following sets of relations:

\begin{mydef}\label{Relaciones}
	\begin{align}
	&\hat{R}^{+}=\left\{r^{b_{i}^{p_i^+}g}=r^{b_{i}^{p_i^+-1}g}...r^{b_i^{p_i^-}g}...\left(r^{b_{i}^{p{i}^+-1}g}\right)^{-1};  b_i^{p_i^+}g\in\Omega_i^{+} \text{ for some } 1\leq i \leq n\right\} \nonumber\\
	&\hat{R}^{-}=\left\{r^{b_{i}^{n_i^-}g}=\left(r^{b_i^{n_i^-+1}g}\right)^{-1}...r^{b_i^{n_i^+}g}...r^{b_i^{n_i^-+1}g}; b_i^{n_i^-}g\in\Omega_i^{-} \text{ for some } 1\leq i \leq n\right\} \nonumber\\
	& \hat{R}=\hat{R}^{+}\cup \hat{R}^{-} \nonumber                     
	\end{align}
\end{mydef}

\begin{obs}\label{obs5}
	
	Let $h=b_i^{p_i^+}\in \Omega_{i}^+$. The relation of $\hat{R}^+$ associated to $h$ is  $r^{b_{i}^{p_i^+}g}=r^{b_{i}^{p_i^+-1}g}...r^{b_i^{p_i^-}g}...\left(r^{b_{i}^{p{i}-1}g}\right)^{-1}$ and will be denoted by $R(h)$.
	
	Analogously, let $h=b_{i}^{n_i^-}g\in \Omega_i^-$. The relation of $\hat{R}^-$ associated to $h$ is $r^{b_{i}^{n_i^-}g}=\left(r^{b_i^{n_i^-+1}g}\right)^{-1}...r^{b_i^{n_i^+}g}...r^{b_i^{n_i^-+1}g}$ will and will be denoted by $R(h)$.
\end{obs}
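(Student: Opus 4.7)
Remark~\ref{obs5} is essentially a naming convention: for each element $h=b_i^{p_i^+}g\in\Omega_i^+$ (respectively $h=b_i^{n_i^-}g\in\Omega_i^-$) it attaches the label $R(h)$ to a specific element of $\hat{R}^+$ (resp. $\hat{R}^-$). The only substantive point that needs verification is that the displayed expression really is an identity in $A_\Gamma$, so that $R(h)$ is a legitimate relation of $G$.

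The plan is to reduce in two steps to a calculation inside the dihedral parabolic $\langle r,b_i\rangle$. First, since the right-hand side of the displayed identity is obtained from its $g=1$ version by simultaneously conjugating each factor $r^{b_i^{s}g}=(r^{b_i^{s}})^{g}$ by $g$, it suffices to verify the case $g=1$. Second, in that case only the generators $r$ and $b_i$ appear, so by van der Lek's parabolic subgroup theorem the identity lives in $\langle r,b_i\rangle\cong A_2(2k_i)=\langle r,b_i\mid(rb_i)^{k_i}=(b_ir)^{k_i}\rangle$, and it is enough to verify it there.

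Inside $A_2(2k_i)$ the verification is the algebraic unpacking already motivated in the paragraph before Definition~\ref{Relaciones}. Because $2k_i$ is even, the Garside element $\Delta=(rb_i)^{k_i}$ is central (cf.\ the paragraph preceding Definition~\ref{taumoves}), and this centrality is equivalent to the Artin relation. From it one deduces the identity $r^{b_i^{k_i}}=r^{b_i^{k_i-1}}\cdots r\,(r^{b_i})^{-1}\cdots(r^{b_i^{k_i-1}})^{-1}$ either by induction on $k_i$ (the step uses a single application of the Artin relation to transpose $r$ across $b_i$) or by expanding both sides as freely reduced words in the conjugates $r^{b_i^{s}}$ and checking agreement. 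Conjugating the resulting identity by $b_i^{-p_i^+}$ recenters the product around $r^{b_i^{p_i^-}}$, producing exactly the shape of $\hat{R}^+$; here one uses $p_i^+-p_i^-=k_i$ to match the indices on the nose.

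The $\hat{R}^-$ half is obtained analogously, either by applying the involution $r\mapsto r^{-1}$ (which swaps positive and negative forms) or by re-indexing via the identities $p_i^--1=n_i^-$ and $p_i^+-1=n_i^+$ supplied by Lemma~\ref{numeritos}. The only real obstacle in the whole argument is the index bookkeeping between the parameters $p_i^{\pm}$ and $n_i^{\pm}$, and Lemma~\ref{numeritos} together with the symmetries recorded in Remark~\ref{obs1} is designed precisely to handle that cleanly.
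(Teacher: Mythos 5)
Your proposal is correct and matches the paper's treatment: the remark itself is purely a naming convention, and the substantive point you verify (that the displayed identities follow from the even Artin relation $(rb_i)^{k_i}=(b_ir)^{k_i}$ by rewriting in terms of conjugates and re-centering via $p_i^+-p_i^-=k_i$) is exactly the computation the paper carries out in the paragraph immediately preceding Definition~\ref{Relaciones}. No gap.
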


Now, let us consider the map $$\psi: A_{\Gamma} \rightarrow A_{\Gamma_{\setminus \{r\}}}$$ induced by $r\mapsto 1$ and $v\mapsto v$ if $v\neq r$. We want to obtain a presentation for the kernel of this map. To do so, we will observe that $A_{\Gamma}\simeq A_{\Gamma_{\setminus\{r\}}} \ltimes \ker(\psi)$ and use a standard argument that can be found in Appendix A of \cite{Conchita}, to obtain a presentation for the semidirect product.

Let $K=\langle Y\mid C\rangle$ and $G=\langle Z \mid T \rangle$ be groups. Let $G$ act on $Y$ by permutations. Notice that $C\leq F(Y)$, the free group generated by $Y$, and observe that $G$ also acts on $F(Y)$. We assume that this action preserves $C$. Let $Y_0$ be a set of representatives for the $G$-orbits in $Y$ and $C_0$ be a set of representatives for the $G$-orbits in $C$. We observe that $C_0\leq \langle t(a_0) \mid a_0\in Y_0, t\in G\rangle$ that is, we may express elements of $C_0$ as products of elements in the $G$-orbit of $Y_0$. We then set $\hat{C}_0\subset \langle t^{-1} a_0 t \mid a_0\in Y_0, t\in G \rangle$ to be the set of fixed expressions for the elements of $C_0$ where we have replaced the action of $G$ on $Y_0$ by the conjugation of elements. The set $\hat{C}_0$ is thus a set of formal expressions which will be used later to express relations in groups.

\begin{lem}\label{subgroup}\cite{Conchita}
	With the notation above, we have $$G\ltimes K= \langle Y_0, Z\mid \hat{C}_0, T, [Stab_G(y),y], y\in Y_0\rangle,$$
	where the semidirect product is given by the action of $G$ on $K$.
\end{lem}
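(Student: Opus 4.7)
The plan is to derive the claimed presentation from the standard presentation of a semidirect product via Tietze transformations. Write $\sigma: G \to \mathrm{Aut}(K)$ for the given action. From the generating sets $Y, Z$ and the relations $C, T$, together with the conjugation relations expressing how $G$ acts on $K$, one obtains the standard presentation
\[ G \ltimes K = \langle Y \cup Z \mid C,\, T,\, zyz^{-1} = \sigma(z)(y) \text{ for all } y\in Y,\, z\in Z\rangle,\]
where $\sigma(z)(y)$ is interpreted as a word in $Y$. My task is to reduce this to the asserted presentation.

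The first step is to eliminate the redundant generators in $Y\setminus Y_0$. For each $y\in Y$ I fix $g(y)\in G$ with $\sigma(g(y))(\bar y)=y$, where $\bar y\in Y_0$ denotes the representative of the $G$-orbit of $y$ (and I set $g(\bar y)=1$). Since in the semidirect product the action is realized by conjugation, the identity $y=g(y)\bar y\, g(y)^{-1}$ holds in $G\ltimes K$, and I use it as a Tietze transformation to remove every $y\in Y\setminus Y_0$ from the generating set, leaving precisely $Y_0\cup Z$.

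Next I examine each family of relations after this substitution. The relations in $T$ are untouched. Any $c\in C$ is, by hypothesis, a $G$-translate of some $c_0\in C_0$, so after substitution the relation $c=1$ becomes a $G$-conjugate of the formal expression $\hat c_0\in\hat C_0$; consequently only the family $\hat C_0$ is needed. The conjugation relations are the interesting case: for $z\in Z$ and $y\in Y$ set $y'=\sigma(z)(y)$, noting $\bar{y'}=\bar y$. Substitution turns $zyz^{-1}=y'$ into
\[ z\,g(y)\bar y\,g(y)^{-1}z^{-1} = g(y')\bar y\,g(y')^{-1},\]
equivalently $[g(y')^{-1}z\,g(y),\,\bar y]=1$. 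The element $g(y')^{-1}z\,g(y)$ lies in $Stab_G(\bar y)$ by construction, so every such relation is one of the claimed commutator relations.

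The one technical point that requires care is verifying that, as $z$ ranges over $Z$ and $y$ ranges over the $G$-orbit of a fixed $\bar y\in Y_0$, the elements $g(y')^{-1}z\,g(y)$ obtained this way generate the whole stabilizer $Stab_G(\bar y)$, so that the full family $[Stab_G(\bar y),\bar y]=1$ is actually a consequence of the relations produced by the Tietze transformation. This is a standard Schreier-type argument using the coset representatives $\{g(y)\}$ of $Stab_G(\bar y)$ in $G$ and the generating set $Z$. Once this is verified, the remaining relations are exactly $\hat C_0$, $T$, and $[Stab_G(y),y]$ for $y\in Y_0$, yielding the presentation in the statement.
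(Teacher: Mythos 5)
The paper offers no proof of this lemma---it is quoted from Appendix~A of \cite{Conchita}---so there is nothing internal to compare against; your argument is the standard one and is correct. Starting from $\langle Y\cup Z\mid C,\,T,\,zyz^{-1}=\sigma(z)(y)\rangle$, eliminating $Y\setminus Y_0$ via $y=g(y)\bar y\,g(y)^{-1}$, and identifying the surviving conjugation relations as commutators of $\bar y$ with the Schreier generators $g(y')^{-1}zg(y)$ of $Stab_G(\bar y)$ is exactly the intended route, and the technical point you flag is handled by Schreier's lemma as you say. The only other spot deserving a line is the reduction of the substituted $C$-relations to conjugates of $\hat C_0$: since $g(\sigma(s)(y))$ and $s\,g(y)$ need only agree modulo $Stab_G(\bar y)$, that reduction itself invokes the commutator relations already present in the target presentation---routine, but worth stating.
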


\begin{lem}\label{Commute}
	Let $G$ be a large even Artin group, $r$ a generator and $G_1=A_{\Gamma\setminus\{r\}}$ as before. If $g\in G_1$ satisfies $g^{-1}rg=r$, then $g=\epsilon$.
\end{lem}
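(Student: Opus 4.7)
The plan is to argue by contradiction. Assume $g\in G_1\setminus\{1\}$ satisfies $rg=gr$ and take $g$ of minimum length $\ell=|g|_{G_1}$. Let $w$ be a geodesic representative of $g$ in $G_1$; because $\psi\colon G\to G_1$ is a length non-increasing retraction fixing $G_1$, $w$ is also geodesic in $G$ and lies in $G_1^{*}$. First I show $|rg|_G=\ell+1$, so that both $rw$ and $wr$ are geodesic representatives of $rg=gr$: given any geodesic $u$ of $rg$, $\psi(u)=g$ and $|\psi(u)|\le|u|$ force $|u|\ge\ell$; equality forces either $u$ to contain some $r^{\pm 1}$ (and then $\psi(u)$ is a strictly shorter word for $g$, contradicting geodesicity of $w$) or $u\in G_1^{*}$ (and then $u$ represents both $g$ and $rg$, forcing $r=1$). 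By Lemma~\ref{Initials}, the initials of $rg$ form a set of size at most two, so they are exactly $\{r,v\}$ with $v=f[w]$.

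Now I case-split on whether $v$ is connected to $r$ in $\Gamma$. If $v=b_i^{\epsilon}$ is connected to $r$ (via an edge labelled $2k_i\ge 4$), let $t\ge 1$ be the maximal power of $b_i^{\epsilon}$ at the start of $w$. Applying Lemma~\ref{Lema 7} (when $\epsilon=+1$) or Lemma~\ref{Lema 9} (when $\epsilon=-1$) to the pair $(rw,wr)$, with the roles of $a$ and $b$ swapped if needed so that $s\ge t\ge 1$, yields $|w|\ge t+2k_i-2$ together with an explicit description of the unique minimum-length $rw$: up to signs it has the form $r\,{}_{2k_i-1}(b_i,r)\,b_i^{t-1}$. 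Stripping the initial $r$ shows the corresponding $w$ contains the letter $r$, contradicting $w\in G_1^{*}$. For $|w|$ strictly above the minimum, I iterate the argument: the symmetric version of Lemma~\ref{Initials} applied to last letters constrains $l[w]$ as well, so Lemma~\ref{Lema 7}/\ref{Lema 9} applied at the tail pins down the suffix of $w$, and combining with the prefix structure decomposes $w$ into Garside-like blocks each forced to contain an $r$, again contradicting $w\in G_1^{*}$.

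If instead $v=c_j^{\pm 1}$ is not connected to $r$, use the retraction $\rho_j\colon G\to\langle r,c_j\rangle\cong F(r,c_j)$ onto the parabolic, which exists because $G$ is an even Artin group and hence retracts onto every parabolic subgroup. Then $\rho_j(g)\in\langle c_j\rangle\cap C_{F(r,c_j)}(r)=\langle c_j\rangle\cap\langle r\rangle=\{1\}$, so in particular $w$ has equal signed counts of $c_j$ and $c_j^{-1}$. Since no critical subword involves both $r$ and $c_j$ (the edge is absent in $\Gamma$), no $\tau$-move applied to $rw$ can shift the initial $r$ past the initial $c_j^{\pm 1}$ while staying geodesic; tracking any critical sequence from $rw$ to $wr$ therefore produces a third distinct initial letter of $rg$, contradicting Lemma~\ref{Initials}.

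The main obstacle I anticipate is making rigorous the iterated decomposition argument in the connected case when $|w|$ exceeds the lemma's minimum length: one must upgrade the uniqueness-at-minimum statement of Lemma~\ref{Lema 7}/\ref{Lema 9} to a structural claim that every geodesic $w$ with $rw=wr$ tiles into Garside-type blocks each containing an $r$, so that $w\in G_1^{*}$ is impossible.
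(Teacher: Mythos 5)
Your strategy --- comparing the two geodesic spellings $rw$ and $wr$ of $rg=gr$ and invoking Lemmas \ref{Lema 7}--\ref{Lema 9} --- is genuinely different from the paper's, and it has two real gaps, one of which you flag yourself. In the connected case, Lemmas \ref{Lema 7} and \ref{Lema 9} assert only what the \emph{minimal} length of a word with two such geodesic spellings is, with uniqueness \emph{at that minimal length}; they say nothing about a given pair $(rw,wr)$ whose length exceeds $t+(2k_i-1)$, and a centralizing $g$ has no reason to be length-minimal among elements with two prescribed initial powers. The "tiling into Garside-type blocks" you would need is not supplied by anything in the paper, and knowing $f[w]$ and $l[w]$ does not decompose the interior of $w$; so the main body of the case analysis is missing, as you acknowledge. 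In the disconnected case, the step "tracking any critical sequence from $rw$ to $wr$ therefore produces a third distinct initial letter" does not follow: you have not established that such a sequence exists, and even granting one, the initial $r$ can move past the initial $c_j^{\pm1}$ indirectly --- a $\tau$-move applied at positions $2,3,\dots$ on a pair $\{c_j,b_i\}$ can replace the letter in position $2$ by some $b_i^{\pm1}$ adjacent to $r$, after which a critical word on $\{r,b_i\}$ covering position $1$ becomes available. The exponent-sum observation via the retraction onto $\langle r,c_j\rangle$ is correct but yields no contradiction by itself.

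The paper's proof is much shorter and avoids both issues. It considers the word $w^{-1}rw$, which represents $r$ and hence is very far from geodesic, so some prefix containing the middle $r$ admits a rightward length-reducing sequence; such a sequence must at some point involve a critical word on a two-generator set containing $r$; but in a large even Artin group a critical word contains the name of each of its two generators at least twice, whereas $w^{-1}rw$ contains the name $r$ exactly once because $w\in G_1^{*}$. That single-occurrence-of-$r$ count is precisely the global constraint your block-decomposition is trying to manufacture locally, and it is available for free; I would rebuild the proof around it rather than around the two-initial-powers lemmas.
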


\begin{proof}
	Let $w$ be a shortlex representative for $g$. As $r=_G w^{-1}rw$, the word $w^{-1}rw$ is obviously not shortlex, so it is clear that there exists a prefix $\alpha$ of $w^{-1}rw$ such that $\alpha$ contains $r$ and admits a rightward length reducing sequence. But that means that at some moment of the sequence we will have a critical word $u_i$ in two letters, one of them being $r$. Since $G$ is large even, in every critical word the name of each of the letters appears at least twice, so one of the occurrences of $r$ must occur in $w$ or $w^{-1}$. But since $g\in G_1$, it is impossible that either $r$ or $r^{-1}$ appear in $w$ or $w^{-1}$. So it is impossible to have such a critical word $u_i$ and hence $g=\epsilon$.
\end{proof}

\begin{lem}\label{action}
$\ker(\psi)$ is isomorphic to:
	 $$K:= \left\langle r^{g}; g\in A_{\Gamma_1} \mid \hat{R}\right\rangle,$$ 
	 where $\hat{R}$ is the set of relations defined in Definition \ref{Relaciones}.
\end{lem}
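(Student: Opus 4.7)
The plan is to exploit the semidirect product structure $A_\Gamma \cong G_1 \ltimes \ker\psi$ and then extract a presentation of the kernel via Lemma \ref{subgroup}. First I would check that the inclusion $\iota : G_1 = A_{\Gamma\setminus\{r\}} \hookrightarrow A_\Gamma$ splits $\psi$. Injectivity of $\iota$ follows from the retraction of even Artin groups onto parabolic subgroups noted in the introduction, and $\psi\circ\iota = \operatorname{id}$ is clear on generators. This gives $A_\Gamma\cong G_1\ltimes\ker\psi$, with $G_1$ acting on $\ker\psi$ by conjugation inside $A_\Gamma$.

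Next I would verify that $\ker\psi$ is generated, as a subgroup, by $\{r^g : g\in G_1\}$. Since $\ker\psi$ is the normal closure of $r$ in $A_\Gamma$, any element is a product of conjugates $u^{-1}r^{\pm 1}u$ with $u\in A_\Gamma$; writing $u = g\cdot k$ with $g\in G_1$ and $k\in\ker\psi$ via the semidirect product decomposition yields $u^{-1}ru = k^{-1}r^g k$, and an induction on the conjugate-length of $k$ reduces everything to $\{r^g : g\in G_1\}$.

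I then apply Lemma \ref{subgroup} with $K=\ker\psi$, $G=G_1$, $Y_0=\{r\}$, and the conjugation action of $G_1$ on $\{r^g\}$. By Lemma \ref{Commute} the stabilizer $\mathrm{Stab}_{G_1}(r)$ is trivial, so the commutator relations $[\mathrm{Stab}_G(y),y]$ vanish. The Artin relations of $A_\Gamma$ split into those internal to $G_1$ (providing $T$) and those involving $r$, namely $(rb_i)^{k_i} = (b_ir)^{k_i}$ for each $b_i$ adjacent to $r$. By the calculation preceding Definition \ref{Relaciones}, each of these rewrites as the fundamental instance $R(b_i^{p_i^+})$ (equivalently $R(b_i^{n_i^-})$); these serve as orbit representatives $\hat{C}_0$. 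Matching the resulting presentation of $G_1\ltimes K$ against the Artin presentation of $A_\Gamma$ forces $K\cong\ker\psi$ and identifies it with the group generated by $\{r^g : g\in G_1\}$ subject to the $G_1$-orbit of the fundamental relations.

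The final step is to reconcile this $G_1$-orbit with $\hat{R}$ as defined. Every relation in $\hat{R}$ is the image of an Artin relation under a formal conjugation, hence holds in $\ker\psi$; conversely, the fundamentals $R(b_i^{p_i^+})$ and $R(b_i^{n_i^-})$ lie in $\hat{R}$ (take $g=1$). The main obstacle I anticipate is showing that the restrictions $b_i^{p_i^+}g\in\Omega_i^+$ and $b_i^{n_i^-}g\in\Omega_i^-$ built into $\hat{R}$ do not omit essential translates: for arbitrary $h\in G_1$, either $R(b_i^{p_i^+}h)$ already appears in $\hat{R}^+$, or by Lemma \ref{Initials} the element $b_i^{p_i^+}h$ admits a geodesic representative beginning with one of at most two other letters, and one must identify a relation in $\hat{R}^+\cup\hat{R}^-$ corresponding to that alternative initial which expresses the same equality between the $r^g$'s. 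The technical results of Section \ref{Section3} governing geodesics with a prescribed initial power (notably Lemmas \ref{Lema 7}, \ref{Lema 8}, \ref{Lema 9} and Corollary \ref{Corol 6}) would be the key tools for carrying out this matching in all cases.
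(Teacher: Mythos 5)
Your high-level strategy matches the paper's: split $\psi$ via the parabolic retraction, use Lemma \ref{subgroup} to present the semidirect product, kill the stabilizer relations with Lemma \ref{Commute}, and match the result against the Artin presentation of $A_\Gamma$ by Tietze transformations. However, there is a genuine gap exactly at the point you flag as ``the main obstacle I anticipate'': Lemma \ref{subgroup} cannot even be invoked until one has verified that the $G_1$-action $r^g\mapsto r^{gh}$ preserves the relator set $\hat{R}$, i.e.\ that the translate of a relation $R(b_i^{p_i^+}g)$ by a generator $h$ is again (a consequence of) a relation in $\hat{R}$ even when $b_i^{p_i^+}gh\notin\Omega_i^+$. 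This is the actual content of the lemma, not a loose end, and you leave it unresolved. Moreover the tools you propose for it (Lemmas \ref{Lema 7}, \ref{Lema 8}, \ref{Lema 9} and Corollary \ref{Corol 6}) are not the ones that do the job --- those are used later for the freeness of $K$. The correct mechanism is Lemma \ref{Lema 3} together with the arithmetic of Lemma \ref{numeritos}: if $b_i^{p_i^+}w$ is geodesic but $b_i^{p_i^+}wh$ has no geodesic representative beginning with $b_i^{p_i^+}$, then $gh$ has a geodesic representative beginning with $b_i^{-1}$, so every exponent of $b_i$ occurring in the translated relation drops by one; since $p_i^+-1=n_i^+$ and $p_i^--1=n_i^-$, the translated relation, solved for $r^{b_i^{p_i^-}gh}=r^{b_i^{n_i^-}\alpha}$, is precisely a relation of $\hat{R}^-$ (and symmetrically for $\hat{R}^-$ relations). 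Without this case analysis the presentation $\langle r^g \mid \hat{R}\rangle$ is not known to carry a $G_1$-action at all, and the whole matching argument cannot start.

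A secondary, fixable issue of logic: as written you apply Lemma \ref{subgroup} ``with $K=\ker\psi$'', but that lemma takes as input a \emph{presentation} of $K$ on which $G$ acts --- which is exactly what you are trying to establish for $\ker\psi$. The paper avoids the circularity by taking $K$ to be the \emph{abstract} group defined by the candidate presentation, checking the action is well defined (the step above), forming $G_1\ltimes K$, and only then showing this semidirect product is isomorphic to $A_\Gamma$ in a way carrying $K$ onto $\ker\psi$. You should restructure your argument in that direction; your concluding sentence (``Matching the resulting presentation \dots forces $K\cong\ker\psi$'') shows you have the right target, but the setup must start from the abstract $K$.
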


\begin{proof}
	Note that $\ker(\psi)$ is the normal subgroup of $A_{\Gamma}$ generated by $r$. 	We define an action of $G_1=A_{\Gamma\setminus\{r\}}=\langle S_1\mid C\rangle$ (where $S_1$ is the set of generators of $G_1$ and $C$ is the set of Artin relations of $G_1$) on the abstract group $K$ via
	$$h^{-1}(r^{g})h=r^{gh}, \hspace{0.3 cm} h\in G_1.$$
	
	Let us see that this action preserves the relators of $K$. To prove that, we may suppose that $h$ is a generator,  i.e. that $h\in S_1$. 	Consider an $\hat{R}^+$ relation $$r^{b_{i}^{p_i^+}g}=r^{b_{i}^{p_i^+-1}g}...r^{b_i^{p_i^-}g}...\left(r^{b_{i}^{p{i}-1}g}\right)^{-1},$$ where $g$ has a representative $w$ with $b_i^{p_i^+}w$ geodesic. If $h$ acts on both sides of this relation we obtain:
$$r^{b_{i}^{p_i^+}gh} \text{ and } r^{b_{i}^{p_i^+-1}gh}...r^{b_i^{p_i^-}gh}...\left(r^{b_{i}^{p{i}-1}gh}\right)^{-1},$$
we want to see that these two elements of $K$ are equal.
	
 Assume first that the element $gh$ has some geodesic representative $u$ with $b_i^{p_i^+}u$ also geodesic. Then there is a $\hat{R}^+$ relation of the form $$r^{b_{i}^{p_i^+}gh}=r^{b_{i}^{p_i^+-1}gh}...r^{b_i^{p_i^-}gh}...\left(r^{b_{i}^{p{i}-1}gh}\right)^{-1},$$ which is precisely the image under $h$ of the previous relation.
	
	Now, we are left with the case when $g$ has a representative $w$ with $b_i^{p_i^+}w$ geodesic but $b_i^{p_i^+}wh$ has no geodesic representative beginning with $b_i^{p_i^+}$. Then, by Lemma \ref{Lema 3} $gh$ has a geodesic representative that begins with $b_i^{-1}$, say $b_i^{-1}\alpha$ . Thus, $b_i^{p_i^+}gh$ has a geodesic representative that begins with $b_i^{p_i^+-1}$. But, by Lemma \ref{numeritos} we have that $p_i^+-1=n_i^+$. In the same way, we can see that $b_i^{p_i^-}gh$ has a geodesic representative that begins with $b_i^{p_i^--1}$ and that, again by Lemma \ref{numeritos}, $p_i^--1=n_i^-$. So, it begins with $b_i^{n_i^-}$ and rearranging, we obtain:
	$$r^{b_i^{p_i^-}gh}=r^{b_i^{n_i^-}\alpha}=\left(r^{b_i^{n_i^-+1}\alpha}\right)^{-1}...r^{b_i^{n_i^+}\alpha}...r^{b_i^{n_i^-+1}\alpha}$$
	 which is a relation of $\hat{R}^{-}$.
		
	Following the same strategy, we can prove that the same happens with the $\hat{R}^{-}$ relations. 
	
	Now, applying Lemma \ref{subgroup} to our case, we obtain that $$G_1\ltimes K = \langle Y_0, Z\mid \hat{C}_0, T, [Stab_{G_1}(y),y], y\in Y_0\rangle,$$ where $Y_0=r$, $Z=V(G_1)$, $\hat{C}_0=\hat{C}^+\cup \hat{C}^-$, $T=C$ with
\begin{align*}
&\hat{C}^{+}=\{(b_{i}^{p_i^+}g)^{-1}r (b_{i}^{p_i^+}g)=(b_{i}^{p_i^+-1}g)^{-1}r (b_{i}^{p_i^+-1}g)...(b_i^{p_i^-}g)^{-1} r (b_i^{p_i^-}g)...((b_{i}^{p{i}-1}g)^{-1}r (b_{i}^{p{i}-1}g))^{-1}; \nonumber \\ &\hspace{1.2 cm} b_i^{p_i^+}g\in\Omega_i^{+} \text{ for some } 1\leq i \leq n\} \nonumber\\
&\hat{C}^-=\{(b_{i}^{n_i^-}g)^{-1}r (b_{i}^{n_i^-}g)=((b_i^{n_i^-+1}g)^{-1}r (b_i^{n_i^-+1}g))^{-1}...(b_i^{n_i^+}g)^{-1}r (b_i^{n_i^+}g)...(b_i^{n_i^-+1}g)^{-1}r (b_i^{n_i^-+1}g); \nonumber \\&\hspace{1.2 cm} b_i^{n_i^-}g\in\Omega_i^{-} \text{ for some } 1\leq i \leq n\} \nonumber\\               
\end{align*}

Therefore, $$G_1\ltimes K =\langle r, V(G_1)\mid \hat{C}_0, C, [Stab_{G_1}(r),r] \rangle.$$

Now, if $g\in Stab_{G_1}(r)$, we have that $g^{-1}rg=r^{sl(g)}=r$ in $K$ because of the form of the action. Hence $sl(g)=\epsilon$ by Lemma \ref{Commute} and thus we don't have any relation of this type. Thus:
	$$G\ltimes K= \langle r, V(G_1)\mid \hat{C}_0, C \rangle.$$

	We define:
	$$C'_0=\{(b^{p_i^+})^{-1}r(b^{p_i^+})=(b^{p_i^+-1})^{-1}r(b^{p_i^+-1})...r...((b^{p_i^+-1})^{-1}r(b^{p_i^+-1}))^{-1}, i=1,...,n\}$$

	So in fact, the relations in $\hat{C}_0\setminus C'_{0}$ are obtained from the ones of $C'_{0}$ by conjugation. So we can eliminate them from the presentation using Tietze transformations. Thus, we have:
		$$G_1\ltimes K=\langle V(G_{1}),r\mid C'_0, C\rangle \simeq A_{\Gamma}$$ and the isomorphism maps $K$ onto $\ker(\psi)$.	Therefore, $K  :=\langle r^{sl(g)}; g\in G_1 \mid \hat{R}\rangle= \langle\langle r\rangle\rangle= \ker(\psi)$. 
\end{proof}

\section{Poly-freeness for large even Artin groups}\label{sec}

In this section we are going to prove our main result, i.e. that every large even Artin group is polyfree.

Let $\Gamma$ be a labeled graph with even labels $\geq 4$ and $A_{\Gamma}$ the associated Artin group. Let $r$ be a vertex of the graph and $G_1=A_{\Gamma\setminus \{r\}}$

By Lemma \ref{action} we know that:
$$K:=\langle\langle r\rangle \rangle=\langle r^{g}; g\in G_1 \mid \hat{R}\rangle $$

with 
\begin{align}
&\hat{R}^{+}=\left\{r^{b_{i}^{p_i^+}g}=r^{b_{i}^{p_i^+-1}g}...r^{b_i^{p_i^-}g}...\left(r^{b_{i}^{p{i}-1}g}\right)^{-1};  b_i^{p_i^+}g\in\Omega_i^{+} \text{ for some } 1\leq i \leq n\right\} \nonumber\\
&\hat{R}^{-}=\left\{r^{b_{i}^{n_i^-}g}=\left(r^{b_i^{n_i^-+1}g}\right)^{-1}...r^{b_i^{n_i^+}g}...r^{b_i^{n_i^-+1}g}; b_i^{n_i^-}g\in\Omega_i^{-} \text{ for some } 1\leq i \leq n\right\} \nonumber\\
& \hat{R}=R^{+}\cup R^{-} \nonumber                     
\end{align}

where $b_1,...,b_n$ are the vertices connected to $r$, $2k_1,..., 2k_n$ the labels of the connecting edges and		\begin{align}
	& \Omega_i^+=\left\{h\in G \mid h \text{ has a geodesic representative beginning with } b_i^{p_i^+}\right\}\nonumber \\
	& \Omega_i^-=\left\{h\in G \mid h \text{ has a geodesic representative beginning with } b_i^{n_i^-}\right\}\nonumber 
	\end{align}
	for $i=1,2,...,n$.

Recall that in Definition \ref{parametros} we have defined the numbers $p_i^+, p_i^-, n_i^-, n_i^+$ and that by Remark \ref{obs2} we already know that $\Omega_i^+\cap \Omega_i^-=\emptyset$.

\begin{lem}\label{Max3}
	An element $g\in G$ can belong to at most two different sets $\Omega_i^{\pm}$, $i=1,...,n$.
\end{lem}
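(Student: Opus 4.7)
The plan is to deduce this from Lemma \ref{Initials}, which limits to $2$ the number of distinct letters in $V(\Gamma)^{\pm}$ that can appear as the first letter of some geodesic representative of a given element $g$. The idea is to build an injection from the set of pairs $(i,\epsilon)$ with $g\in\Omega_i^{\epsilon}$ into the set of initial letters of geodesic representatives of $g$.

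First I would observe that, by Remark \ref{obs1}, we have $p_i^+\geq 2$ and $n_i^-\leq -1$ for every $i$. Therefore any geodesic representative of $g$ witnessing $g\in\Omega_i^+$ begins with the letter $b_i$, and any geodesic representative of $g$ witnessing $g\in\Omega_i^-$ begins with the letter $b_i^{-1}$. This assigns to each membership $g\in\Omega_i^{\epsilon}$ a letter $b_i^{\epsilon}\in V(\Gamma)^{\pm}$, where by $\epsilon$ I mean $+1$ or $-1$.

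Next I would check that the assignment $(i,\epsilon)\mapsto b_i^{\epsilon}$ is injective on the set of pairs for which $g\in\Omega_i^{\epsilon}$. Two different pairs $(i,\epsilon)$ and $(j,\eta)$ give the same letter only when $i=j$ and $\epsilon=\eta$, except possibly in the degenerate case $i=j$ with $\epsilon\neq\eta$, which would require $g\in\Omega_i^+\cap\Omega_i^-$; but this intersection is empty by Remark \ref{obs2}. Hence distinct pairs always give distinct letters of $V(\Gamma)^{\pm}$.

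Finally I would apply Lemma \ref{Initials}: the set of letters in $V(\Gamma)^{\pm}$ appearing as the first letter of some geodesic representative of $g$ has cardinality at most $2$. Combined with the injection above, this forces the number of pairs $(i,\epsilon)$ with $g\in\Omega_i^{\epsilon}$ to be at most $2$, proving the lemma. I do not foresee a serious obstacle: the content of the argument is entirely packaged in Lemma \ref{Initials} and Remark \ref{obs2}, and the only mild point to articulate cleanly is why the assignment to initial letters is injective.
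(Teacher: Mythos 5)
Your proof is correct and follows exactly the route the paper intends: the paper's own proof of Lemma \ref{Max3} is just the one-line remark that it follows from Lemma \ref{Initials}, and your write-up supplies the (straightforward) details of that deduction. The only minor redundancy is in the injectivity step: since $b_i$ and $b_i^{-1}$ are already distinct letters of $V(\Gamma)^{\pm}$, the assignment $(i,\epsilon)\mapsto b_i^{\epsilon}$ is injective without needing Remark \ref{obs2} at all.
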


\begin{proof}
	It is easily deduced from  Lemma \ref{Initials}.
\end{proof}

\begin{lem}\label{obs2b}
	If $i\neq j$, $\Omega_i^+ \cap \Omega_j^- \neq\emptyset$ if and only if $n_j^-=-1$, i.e. $k_j=2$.
\end{lem}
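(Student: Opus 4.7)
The plan is to split the biconditional into its two directions and treat each separately. The ``only if'' direction will follow quickly from Lemma \ref{Lema 8}, while the ``if'' direction will be established by producing an explicit witness via Lemma \ref{Lema 9}.

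For the forward implication, suppose $g \in \Omega_i^+ \cap \Omega_j^-$. By definition $g$ admits two geodesic representatives of the form $b_i^{p_i^+} w_1$ and $b_j^{n_j^-} w_2$. By Remark \ref{obs1}(1) we have $p_i^+ \geq 2$, and since $i \neq j$ the generators $b_i$ and $b_j$ are distinct. If $n_j^- \leq -2$, then setting $s = p_i^+$ and $t = |n_j^-|$ (both $\geq 2$) would furnish precisely the kind of pair of geodesic words ruled out by Lemma \ref{Lema 8}, a contradiction. Hence $n_j^- = -1$, equivalently $k_j = 2$.

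For the converse, assume $k_j = 2$, so $n_j^- = -1$. Let $2m$ be the label of the edge joining $b_i$ and $b_j$ in $\Gamma$. By Lemma \ref{Lema 9}, applied with the substitutions $a = b_j$, $b = b_i$, and $t = p_i^+$, the word
\[
w = b_i^{p_i^+}\,\cdot\,{}_{2m-1}(b_j^{-1}, b_i^{-1})
\]
is geodesic and represents the same element of $A_\Gamma$ as the geodesic word
\[
\hat{w} = {}_{2m-1}(b_j^{-1}, b_i^{-1})\,\cdot\, b_i^{p_i^+}.
\]
Letting $g$ denote this common element, the representative $w$ certifies $g \in \Omega_i^+$, and $\hat{w}$, which begins with $b_j^{-1}$, certifies $g \in \Omega_j^-$ because $n_j^- = -1$. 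Therefore $\Omega_i^+\cap\Omega_j^-\neq\emptyset$.

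The main subtlety to watch for is the implicit hypothesis that $b_i$ and $b_j$ lie on a common edge of $\Gamma$, since Lemma \ref{Lema 9} is formulated with such an edge in mind. If $b_i$ and $b_j$ are not adjacent, the subgroup $\langle b_i, b_j\rangle$ is free of rank two, and the retraction of $A_\Gamma$ onto this parabolic (available because $A_\Gamma$ is even) can be used to see that $\Omega_i^+\cap\Omega_j^-=\emptyset$ regardless of $k_j$; that case is consistent with the forward direction and simply means the ``if'' direction is vacuously applied in the adjacent setting, which is the setting in which the lemma is subsequently used.
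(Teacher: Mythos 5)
Your proof is correct and follows essentially the same route as the paper's: the forward direction is exactly the paper's appeal to Lemma \ref{Lema 8} with $s=p_i^+\geq 2$ and $t=|n_j^-|$, and the converse is the paper's appeal to Lemma \ref{Lema 9}, which you merely make explicit by exhibiting the witness $b_i^{p_i^+}\,{}_{2m-1}(b_j^{-1},b_i^{-1})=_G {}_{2m-1}(b_j^{-1},b_i^{-1})\,b_i^{p_i^+}$. Your closing remark on the adjacency of $b_i$ and $b_j$ flags an implicit hypothesis that the paper's own proof also leaves tacit, so it does not change the comparison.
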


\begin{proof}
	Recall that $p_i^+\geq 2$. By Lemma \ref{Lema 8}, given a geodesic word $w=a^s w'$, $s\geq 2$, there cannot be another geodesic representative of the same element beginning by $b^{-t}$, $t>1$. So, if $\Omega_i^+ \cap \Omega_j^- \neq\emptyset$, then $n_j^-=-1$. The fact that if $n_j^-=-1$ then $\Omega_i^+ \cap \Omega_j^- \neq\emptyset$ is clear by Lemma \ref{Lema 9}.
\end{proof}

Now, notice that by the form of the relations in $\hat{R}$, we see that each $r^{b_i^{p_i^+}g}$ (resp. $r^{b_i^{n_i^-}g}$) is conjugate in $K$ to $r^{b_i^{p_i^-}g}$ (resp. $r^{b_i^{n_i^+}g}$ ).

Let us define the following maps:
\begin{align}
\rho_i^{\varepsilon}: \Omega_i^{\varepsilon} &\longrightarrow G_1 & \nonumber \\
h &\mapsto b_i^{-\varepsilon k_i}h \nonumber
\end{align}
for $i=1,...,n$ and $\varepsilon=\pm$.

Notice that by Definition \ref{parametros}, $p_i^+-p_i^-=n_i^+-n_i^-=k_i$. Therefore, if $\varepsilon=+$,since $h\in\Omega_i^{+}$, $h$ has a geodesic representative of the form $b_i^{p_i^+}\bar{h}$ where $\bar{h}$ doesn't have a geodesic representative beginning with $b_i^{-1}$. Thus, $\rho_i^+(h)=b_i^{p_i^-}\bar{h}$. Similarly, if $\varepsilon=-$, then $h$ has a geodesic representative of the form $b_i^{n_i^-}\bar{h}$ where $\bar{h}$ doesn't have a geodesic representative beginning with $b_i$. Thus, $\rho_i^-(h)=b_i^{n_i^+}\bar{h}$.

\begin{mydef}
Let $$\Omega=\cup_{i=1}^n (\Omega_i^+\cup\Omega_i^-).$$ We also define $\Lambda=G_1 \setminus \Omega$.

\end{mydef}

\begin{mydef}
Let $\mathcal{P}$ be the set of subsets of $G_1$. We set:
\begin{align}
\rho: \mathcal{P} &\longrightarrow \mathcal{P} \nonumber \\
A &\mapsto \cup_{i=1}^n (\rho_i^+(A\cap \Omega_i^+))\cup (\rho_i^-(A\cap\Omega_i^-))\cup (A\cap \Lambda).
\end{align}

When we consider the image under $\rho$  of a one element subset $\{g\}\subset \mathcal{P}(W)$, we will write $\rho(g)$ instead of $\rho(\{g\})$.
\end{mydef}

\begin{lem}\label{obs3}
	Let $g\in \Omega$.
	
	\begin{enumerate}[(i)]
		\item If $g$ lies in only one of the sets $\Omega_{i}^{\pm}$, say $g\in \Omega_{i}^{\epsilon}$, then $$\rho(g)=\{\rho_i^{\epsilon}(g)\}.$$
		
		\item If $g$ lies in two of the sets, say $g\in \Omega_i^{\epsilon}\cap \Omega_j^{\delta}$, then $$\rho(g)=\{\rho_i^{\epsilon}(g), \rho_j^{\delta}(g)\}.$$

	\end{enumerate}
	Note that by Lemma \ref{Max3} there are no other possibilities.

\end{lem}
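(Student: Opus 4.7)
The plan is to prove this by a direct unpacking of the definition of $\rho$ applied to the singleton $\{g\}$, using the facts from Remark \ref{obs2} and Lemma \ref{Max3} to control which index sets can contribute.

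First I would write out the definition for a singleton. By definition,
$$\rho(\{g\}) \;=\; \bigcup_{i=1}^{n}\bigl(\rho_i^{+}(\{g\}\cap\Omega_i^{+})\cup \rho_i^{-}(\{g\}\cap\Omega_i^{-})\bigr)\,\cup\,(\{g\}\cap\Lambda).$$
Since $g\in\Omega$, we have $g\notin\Lambda$, so the last term is empty. For each pair $(i,\varepsilon)$, the set $\{g\}\cap\Omega_i^{\varepsilon}$ is either $\{g\}$ (if $g\in\Omega_i^{\varepsilon}$) or $\emptyset$ (otherwise), and in the first case $\rho_i^{\varepsilon}(\{g\}\cap\Omega_i^{\varepsilon})=\{\rho_i^{\varepsilon}(g)\}$. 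Therefore the only contributions to $\rho(\{g\})$ come from those pairs $(i,\varepsilon)$ for which $g\in\Omega_i^{\varepsilon}$.

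Next I would invoke the two results that bound how many such pairs there can be. By Remark \ref{obs2}, $\Omega_i^{+}\cap\Omega_i^{-}=\emptyset$ for each $i$, so a fixed index $i$ can contribute at most one sign $\varepsilon$. By Lemma \ref{Max3}, the total number of pairs $(i,\varepsilon)$ with $g\in\Omega_i^{\varepsilon}$ is at most two, which means only the two cases stated in the lemma can occur (and since $g\in\Omega$ there is at least one such pair, so the case analysis is exhaustive).

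Finally I would conclude by reading off the union. In case (i), exactly one pair $(i,\varepsilon)$ satisfies $g\in\Omega_i^{\varepsilon}$, so the union collapses to $\{\rho_i^{\varepsilon}(g)\}$. In case (ii), exactly two distinct pairs $(i,\varepsilon)$ and $(j,\delta)$ contribute, giving $\rho(\{g\})=\{\rho_i^{\varepsilon}(g),\rho_j^{\delta}(g)\}$. There is essentially no obstacle here: the lemma is a bookkeeping statement that simply records how the definition of $\rho$ evaluates on singletons, once one knows that $g$ can belong to at most two of the sets $\Omega_i^{\pm}$. The only mild subtlety is being careful that in case (ii) the two contributing pairs cannot collapse to a single $\Omega_i^{\pm}$, which is ensured by Remark \ref{obs2}.
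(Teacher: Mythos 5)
Your proposal is correct and follows the same route as the paper, which simply notes that the lemma is an immediate consequence of the definition of $\rho$; you have merely written out the bookkeeping explicitly, including the observation that $\{g\}\cap\Lambda=\emptyset$ and the appeal to Remark \ref{obs2} and Lemma \ref{Max3} to control the contributing pairs. No issues.
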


\begin{proof}
	The result is just an inmediate consequence of the definition of $\rho$.
\end{proof}

\begin{lem}\label{obs4}
	Let $g\in \Omega_i^+$, $g_1\in \Omega_i^-$. We have
	
	\begin{enumerate}[(i)]
		\item $|g|>|\rho_i^+(g)|$
		\item $|g_1|\geq |\rho_i^-(g_1)|$ with equality if and only if $w_1=b_i^{n_i^-}w_2$ is a geodesic representative of $g_1$, $k_i$ is an even number and $w_2$ doesn't have a geodesic representative beginning with $b_i^{-1}$ (and therefore, it cannot begin with $b_i^{\pm 1}$).
	\end{enumerate}
\end{lem}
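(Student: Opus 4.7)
The proof of (i) is a direct length estimate. Take a geodesic representative $b_i^{p_i^+}\bar g$ of $g$, which exists since $g\in\Omega_i^+$; then $|g|=p_i^++|\bar g|$. Because $b_i^{p_i^-}\bar g = b_i^{-k_i}\cdot b_i^{p_i^+}\bar g$ is a representative of $\rho_i^+(g)$, we get $|\rho_i^+(g)|\le |p_i^-|+|\bar g|$, and the strict inequality follows from $p_i^+>|p_i^-|$ (Remark \ref{obs1}(2)).

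For (ii), take a geodesic representative $w_1=b_i^{n_i^-}w_2$ of $g_1$, so $|g_1|=|n_i^-|+|w_2|$. Since $b_i^{n_i^+}w_2=b_i^{k_i}\cdot b_i^{n_i^-}w_2$ represents $\rho_i^-(g_1)$, we have $|\rho_i^-(g_1)|\le n_i^++|w_2|$, and Remark \ref{obs1}(3) yields
$$|g_1|-|\rho_i^-(g_1)|\ge |n_i^-|-n_i^+\ge 0.$$
Equality forces both $|n_i^-|=n_i^+$ (equivalent to $k_i$ being even by Remark \ref{obs1}(3)) and that the word $b_i^{n_i^+}w_2$ be geodesic. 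Lemma \ref{Lema 2} reduces the latter to $b_iw_2$ being geodesic.

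The core of the argument, and what I expect to be the main obstacle, is the subclaim that $b_iw_2$ is geodesic if and only if $w_2$ admits no geodesic representative beginning with $b_i^{-1}$. The ``only if'' direction is immediate: any geodesic $w_2=b_i^{-1}w_2'$ gives $b_iw_2=_G w_2'$ of strictly smaller length. For the converse, assume $b_iw_2$ is not geodesic; inverting, $w_2^{-1}b_i^{-1}$ is a freely reduced non-geodesic word (freely reduced because $w_2$ does not begin with $b_i^{-1}$, so $w_2^{-1}$ does not end in $b_i$). Choose a lex order in which $w_2^{-1}$ is shortlex; Proposition \ref{Simpl} then says a single rightward length reducing or leftward lex reducing sequence converts $w_2^{-1}b_i^{-1}$ to its shortlex form. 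The leftward lex option preserves length, contradicting non-geodesicity, so the sequence is a rightward length reducing one with tail $b_i^{-1}$.

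Such a sequence ends at some $\alpha_r\tau(u_r)\beta_r$ with $\beta_r=b_i^{-1}$ and $l[\tau(u_r)]=b_i$; the junction $b_i\cdot b_i^{-1}$ cancels, producing a shortlex word $v$ of length $|w_2|-1$ with $v\cdot b_i=_G w_2^{-1}$. The delicate point is that $v$ does not end in $b_i^{-1}$: otherwise the last two letters of $\tau(u_r)$ would be $b_i^{-1}b_i$, contradicting the fact that $\tau(u_r)$ is freely reduced. Hence $b_i^{-1}v^{-1}$ is freely reduced of length $|w_2|$ and represents $w_2$, giving a geodesic representative of $w_2$ beginning with $b_i^{-1}$ and contradicting the hypothesis.
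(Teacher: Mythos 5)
Your proof is correct, but it is considerably more detailed than the paper's, which consists of the single sentence ``The result follows from Remark \ref{obs1}.'' The inequality parts of your argument are exactly what that citation is meant to convey ($p_i^+>|p_i^-|$ and $|n_i^-|\ge n_i^+$ applied to the explicit representatives $b_i^{p_i^-}\bar g$ and $b_i^{n_i^+}w_2$), so there you match the paper. The genuine added content is your careful treatment of the ``if and only if'' in (ii): the paper implicitly assumes that $b_i^{n_i^+}w_2$ is geodesic precisely when $w_2$ has no geodesic representative beginning with $b_i^{-1}$, and you actually prove this via Lemma \ref{Lema 2} and Proposition \ref{Simpl}, in the same style as the paper's own proofs of Lemmas \ref{Lema 2} and \ref{Lema 3}. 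One step needs repair: ``choose a lex order in which $w_2^{-1}$ is shortlex'' is not justified (there is no reason an arbitrary geodesic word should be shortlex minimal for some ordering of the letters). The fix is standard and costs nothing: fix any order and replace $w_2^{-1}$ by $u=sl(w_2^{-1})$. If $l[u]=b_i$ you are already done, since $u^{-1}$ is then a geodesic representative of $w_2$ beginning with $b_i^{-1}$; otherwise $ub_i^{-1}$ is freely reduced, non-geodesic, and Proposition \ref{Simpl} applies to it exactly as you argue, producing a geodesic representative of $w_2^{-1}$ ending in $b_i$. With that substitution the proof is complete.
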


\begin{proof}
	The result follows from Remark \ref{obs1}.
\end{proof}

\begin{lem}\label{Lema 10}
	Let $g\in \cup_{q=1}^{n} \Omega_q^-$ such that $|g|=|\rho_i^{-}(g)|$. Then  $\rho_i^{-}(g)$ doesn't belong to the intersection of two of the sets $\Omega_j^{\pm}$, $j=1,2,...,n$.
\end{lem}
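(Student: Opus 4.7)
My plan is to combine the rigidity provided by the equality $|g|=|\rho_i^-(g)|$ with the bound from Lemma \ref{Initials} on the number of distinct initial letters among geodesic representatives of a given element. By Lemma \ref{obs4}(ii), the assumption $|g|=|\rho_i^-(g)|$ forces $k_i$ to be even and provides a geodesic representative $g = b_i^{n_i^-} w_2$ in which $w_2$ admits no geodesic representative beginning with $b_i^{\pm 1}$. In that case $\rho_i^-(g) = b_i^{n_i^+} w_2$ is a geodesic word with $n_i^+ = k_i/2 \geq 1$, so $b_i$ occurs as the first letter of some geodesic representative of $\rho_i^-(g)$; this is the piece of information I would then exploit to rule out each possible membership.

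First I would show that $\rho_i^-(g) \notin \Omega_i^-$. Indeed, such a membership produces a geodesic representative beginning with $b_i^{-1}$, which together with the representative $b_i^{n_i^+} w_2$ beginning with $b_i$ directly contradicts Lemma \ref{Lema 1}. Next I would show that $\rho_i^-(g) \notin \Omega_i^+$: a hypothetical geodesic representative $b_i^{p_i^+} w_3 = b_i^{n_i^+ + 1} w_3$ (using $p_i^+ = n_i^+ + 1$ from Lemma \ref{numeritos}) would force $|w_3|=|w_2|-1$, and cancelling $b_i^{n_i^+}$ in $b_i^{n_i^+} w_2 =_G b_i^{n_i^+ + 1} w_3$ would yield $w_2 =_G b_i w_3$ with $|b_i w_3| = |w_2|$. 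This would give a geodesic representative of $w_2$ starting with $b_i$, contradicting the defining property of $w_2$ supplied by Lemma \ref{obs4}(ii).

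Having ruled out $\Omega_i^{\pm}$, suppose toward a contradiction that $\rho_i^-(g) \in \Omega_j^{\epsilon} \cap \Omega_k^{\delta}$ with $(j,\epsilon) \neq (k,\delta)$. If $j = k$, then $\epsilon \neq \delta$ and the intersection is empty by Remark \ref{obs2}; hence $j \neq k$, and the previous step forces $j, k \neq i$. But then $\rho_i^-(g)$ admits geodesic representatives beginning with the three pairwise distinct letters $b_i$, $b_j^{\epsilon}$, $b_k^{\delta}$ of $V(\Gamma)^{\pm}$, contradicting Lemma \ref{Initials}. The step I expect to require the most care is the exclusion of $\Omega_i^+$, since it is the only point where one must manipulate the word structure rather than simply count initial letters; the other cases then follow cleanly from Remark \ref{obs2} and Lemma \ref{Initials}.
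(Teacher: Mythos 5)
Your proof is correct and takes essentially the same approach as the paper: both extract from Lemma \ref{obs4}(ii) the geodesic representative $b_i^{n_i^+}w_2$ of $\rho_i^-(g)$ beginning with $b_i$, rule out $\Omega_i^{\pm}$ using $p_i^+=n_i^++1$ and the fact that $w_2$ has no geodesic representative starting with $b_i^{\pm 1}$, and then invoke Lemma \ref{Initials} to forbid membership in two further sets $\Omega_j^{\pm}$, $\Omega_k^{\pm}$ with $j,k\neq i$. Your explicit cancellation argument for excluding $\Omega_i^+$ just spells out the step the paper states more tersely.
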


\begin{proof}
	The hypothesis and Lemma \ref{obs4} imply that $g$ has a geodesic representative of the form $w=b_i^{n_i^-}w'$ and that $k_i$ is an even number.
	
	Therefore, the element $\rho_i^-(g)$ has a geodesic representative of the form $b_i^{k_i+n_i^-}w'$ (because of the hypothesis on its length), where no geodesic representative of $w'$ begins with $b_i$.
	
	Assume that there exists $b_i^{p_i^+}u$ geodesic such that $b_i^{k_i+n_i^-}w'=_G b_i^{p_i^+}u$. This is impossible because $k_i+n_i^-<p_i^+$ by Lemma \ref{numeritos} and $w'$ doesn't have any geodesic representative beginning with $b_i$. So $\rho_i^{-}(g)\not\in \Omega_i^{+}$.
	
	Analogously, we see that $\rho_i^{-}(g)$ cannot be in $\Omega_i^{-}$ (because $|n_i^+|>0$ and $w'$ doesn't have any geodesic representative beginning with $b_i^{-1}$) and by Lemma \ref{Initials}  there is only other possible letter such that the word can begin with it so it can belong at most to one $\Omega_j^{\pm}$.
\end{proof}

\begin{lem}\label{LexCond}
	Assume that we choose a lexicographic order such that for every $j$, $v_j<v_j^{-1}$, then for every $g$ in the suitable set $sl(g)>_{slex} sl(\rho_i^{\pm}(g))$.
\end{lem}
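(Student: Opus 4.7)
My plan is to split the proof into cases according to the sign $\varepsilon$ appearing in $\rho_i^{\varepsilon}$, and in each case to either reduce to a length comparison or, when lengths agree, to a lexicographic one.

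When $\varepsilon = +$, Lemma \ref{obs4}(i) yields $|g| > |\rho_i^+(g)|$ outright. Because the shortlex order compares lengths first, this already gives $sl(\rho_i^+(g)) <_{slex} sl(g)$. The same argument works in the $\varepsilon = -$ case whenever the inequality in Lemma \ref{obs4}(ii) is strict.

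The delicate situation is the equality subcase of $\rho_i^-$. By Lemma \ref{obs4}(ii), equality forces $k_i$ to be even, $g$ to admit a geodesic representative $w_1 = b_i^{n_i^-}w_2 = b_i^{-k_i/2}w_2$, and $w_2$ to have no geodesic starting with $b_i^{-1}$. I would first observe that $w_2$ cannot have a geodesic starting with $b_i$ either: substituting such a representative into $w_1$ would produce a free reduction against the initial $b_i^{-k_i/2}$ block and yield a strictly shorter word for $g$, contradicting geodesicity. Consequently $\rho_i^-(g) = b_i^{k_i/2}w_2$ is likewise geodesic, and the equality $|sl(g)| = |sl(\rho_i^-(g))|$ reduces the shortlex comparison to a pure lexicographic one.

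To close the equality subcase I would argue that $f[sl(g)] = b_i^{-1}$ and $f[sl(\rho_i^-(g))] = b_i$, so that the hypothesis $b_i <_{lex} b_i^{-1}$ forces $sl(\rho_i^-(g)) <_{lex} sl(g)$ and hence the desired shortlex inequality. Lemma \ref{Lema 1} immediately prevents any geodesic of $g$ from starting with $b_i$ (and symmetrically, no geodesic of $\rho_i^-(g)$ starts with $b_i^{-1}$). Lemma \ref{Initials} then leaves at most one further candidate initial letter for each shortlex representative, and Lemmas \ref{Lema 8} and \ref{Lema 9}, combined with the fact that $w_2$ admits no geodesic beginning with $b_i^{\pm 1}$, rule out those alternative candidates by bounding the length of words with two geodesic representatives beginning with disparate letter-blocks. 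The main obstacle is exactly this last identification of first letters: it is where the technical results of Section \ref{Section3} are brought to bear, since the intuitive fact that the $b_i^{\pm k_i/2}$ prefix survives into the shortlex representative is not at all automatic once one allows $\tau$-moves that mix the initial block with $w_2$.
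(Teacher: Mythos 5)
Your reduction via Lemma \ref{obs4} to the equality subcase of $\rho_i^-$ (with $k_i$ even and $w_2$ having no geodesic representative beginning with $b_i^{\pm1}$) is exactly the paper's first step, and the length-drop cases are handled correctly. The gap is in your final step: the identification $f[sl(g)]=b_i^{-1}$ and $f[sl(\rho_i^-(g))]=b_i$ is false in general, and Lemmas \ref{Lema 8} and \ref{Lema 9} cannot rule out the second candidate initial letter permitted by Lemma \ref{Initials} --- they only give a lower bound on the length of an element with two such initial letters and pin down the minimal instances, so for elements at or above that length the alternative letter genuinely occurs. Concretely, take $k_i=2$ (so $n_i^-=-1$, $n_i^+=1$), let $c$ be another generator of $G_1$ joined to $b_i$ with label $2m$ and with $c<_{lex}b_i$, and set $g=b_i^{-1}\,{}_{2m-1}(c,b_i)$. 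Here $w_2={}_{2m-1}(c,b_i)$ is a positive word of length $2m-1$, hence the unique geodesic for its element, so it has no geodesic representative beginning with $b_i^{\pm1}$ and we are squarely in your equality subcase. But by Lemma \ref{Lema 9}, $g=_G {}_{2m-1}(c,b_i)\,b_i^{-1}$, so $sl(g)$ begins with $c$, not $b_i^{-1}$; likewise $\rho_i^-(g)=b_i\,{}_{2m-1}(c,b_i)=_G {}_{2m}(c,b_i)$ also begins with $c$. The comparison therefore cannot be settled by first letters, and your argument stalls precisely at the point you yourself flagged as the main obstacle.

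For comparison, the paper closes the equality subcase by a right-cancellation: it asserts that $sl(b_i^{n_i^-}w')>_{slex} sl(b_i^{n_i^+}w')$ holds if and only if $sl(b_i^{n_i^-}w'w'^{-1})=b_i^{n_i^-}>_{slex} b_i^{n_i^+}=sl(b_i^{n_i^+}w'w'^{-1})$, which follows from $|n_i^-|=|n_i^+|$ and the hypothesis $b_i<_{lex}b_i^{-1}$. In other words, the comparison is pushed onto the common suffix $w'$ rather than onto the first letters of the shortlex forms. If you want to repair your argument you would need some statement of this flavour (or a letter-by-letter comparison of $sl(b_i^{n_i^-}w')$ and $sl(b_i^{n_i^+}w')$ through the critical sequences that rewrite them), not a claim about initial letters alone.
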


\begin{proof}
	By Lemma \ref{obs4} we only have to consider the case when $g$ has a geodesic representative $w=b^{n_i^-}w'$, $k_i$ is an even number and $w'$ doesn't have any geodesic representative beginning with $b_i^{\pm 1}$.
	
	In this case, by definition of $\rho_{i}^{-}$ we know that $\rho_i^-(g)=b^{n_i^+}g'$ where $g'$ is the element represented by the word $w'$ and $|n_i^-|=|n_i^+|$. Notice that $sl(b_i^{n_i^-}w')>_{slex} sl(b_i^{n_i^+}w')$ if and only if $sl(b_i^{n_i^-}w'w'^{-1})=b_i^{n_i^-}>_{slex} sl(b_i^{n_i^+}w'w'^{-1})=b_i^{n_i*}$. But $n_i^-$ is a negative number and $n_i^+$ is positive and since in the lexicographic order we have $b<b^{-1}$, the result follows.
\end{proof}

\begin{lem}\label{Proposition 12}
	Given $g\in \Omega$ there exists $l\in\mathbb{Z}^+$ such that if $h\in \rho^l(g)=\underbrace{\rho(\rho...\rho}_l(g))$ and $h\in\Omega$ then $|h|<|g|$.
\end{lem}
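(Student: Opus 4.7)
The plan is to exhibit a \emph{length-preserving $\Omega$-subtree} of iterated $\rho$-images rooted at $g$ that is finite, so that $l$ can be taken to be one more than its depth. Length-decreasing branches pose no problem by Lemma \ref{obs4}(i), and the finiteness of the subtree will follow from combining a well-founded measure (Lemma \ref{LexCond}) with bounded branching (Lemmas \ref{Max3} and \ref{Lema 10}).

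First I would reduce to length-preserving chains. By Lemma \ref{obs4}(i) every $\rho_i^+$-step strictly decreases length, while by Lemma \ref{obs4}(ii) every $\rho_i^-$-step is non-increasing; elements of $\Lambda$ are fixed by $\rho$. Consequently, once a descendant has length $<|g|$, so do all of its further descendants. Thus the elements of $\rho^k(g)\cap\Omega$ of length exactly $|g|$ are precisely those reached from $g$ by $k$ successive length-preserving $\rho_i^-$-steps, and these constitute the subtree alluded to above.

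Next I would fix a lexicographic order on $L$ satisfying $v_j<_{lex}v_j^{-1}$ for every generator $v_j$, in order to apply Lemma \ref{LexCond}: along any length-preserving step, $sl(\cdot)$ strictly decreases. Since $L$ is finite there are only finitely many shortlex words of length $|g|$, so each such chain terminates after finitely many steps. To bound the branching of the subtree I would argue as follows: Lemma \ref{Max3} bounds the number of $\rho$-children of the root $g$ by two; and Lemma \ref{Lema 10} bounds by one the number of children in the subtree at every non-root vertex, because such a vertex has the form $\rho_i^-(h')$ with $h'\in\Omega_i^-$ and $|h'|=|\rho_i^-(h')|$, hence lies in at most one $\Omega_j^\pm$ and therefore contributes at most one descendant to the subtree. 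Combining finite branching with finite path lengths, K\"onig's lemma (in its trivial finite form) gives that the subtree is finite; taking $l$ to be one more than its depth ensures that $\rho^l(g)\cap\Omega$ contains no element of length $|g|$.

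The main obstacle I anticipate is verifying cleanly that the hypothesis of Lemma \ref{Lema 10} applies at every non-root vertex of the subtree, i.e.\ that each length-preserving step of the iteration is necessarily a $\rho_i^-$-step on some $\Omega_i^-$-element; this is where the reduction in the second paragraph (using Lemma \ref{obs4}(i)) is essential. The remaining bookkeeping — in particular handling the case $g\in\Omega_i^-\cap\Omega_j^-$, where the root really does have two length-preserving children, and the case $g\in\Omega_i^+\cap\Omega_j^-$, where the $\rho_i^+$-child drops out of the subtree immediately — is then routine.
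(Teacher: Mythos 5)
Your proof is correct, and its overall skeleton matches the paper's: reduce to chains of length-preserving steps, observe via Lemma \ref{obs4} that such a step must be a $\rho_i^-$-step applied to an element with a geodesic representative $b_i^{n_i^-}w'$ ($k_i$ even), and use Lemma \ref{Lema 10} to see that after the first step the branching collapses to one. Where you genuinely diverge is in the termination argument. The paper proves termination by induction on the number of negative letters of a geodesic representative of $g$: since $\tau$-moves in an even Artin group preserve the count of negative letters, this count is a geodesic invariant, and a length-preserving step $b_i^{n_i^-}w'\mapsto b_i^{n_i^+}w'$ strictly decreases it. You instead invoke Lemma \ref{LexCond}: every $\rho_i^{\pm}$-step strictly decreases the shortlex normal form, so a length-preserving chain is a strictly $<_{\mathrm{lex}}$-decreasing sequence of words of the fixed length $|g|$ over a finite alphabet, hence finite. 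Both measures are well founded and both arguments work; yours has the advantage of reusing Lemma \ref{LexCond} verbatim (the paper only deploys it later, in Proposition \ref{freeness}) and of not needing the separate observation that the number of negative letters is invariant under change of geodesic representative, while the paper's inductive formulation packages the bound $l$ more explicitly in terms of the word $w$. One small point worth making explicit in your write-up: an element of $\rho^l(g)\cap\Omega$ of length $|g|$ must arise from a chain all of whose intermediate elements lie in $\Omega$ (an intermediate element in $\Lambda$ is fixed by $\rho$ forever, so its terminal descendant could not lie in $\Omega$); this is what guarantees that every such element really sits at depth $l$ of your length-preserving subtree, so that taking $l$ beyond the (finite) depth of that subtree does the job.
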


\begin{proof}
	Let $g\in \Omega$, we know that $\#\rho(g)=1$ or $2$ (recall that we use $\#A$ to denote the cardinal of the set $A$.). We define $\beta=\{h\in\rho(w) \mid |h|=|g| \}$. If $\beta$ is empty, the result follows for $l=1$ by Lemma \ref{obs4}. So we can suppose $\beta$ is not empty. 
	
	By Lemma \ref{obs4} $g$ must have a geodesic representative $w= b_i^{n_i^-}w'$ for some $i$ such that $k_i$ is even and $w'$ doesn't have any geodesic representative beginning with $b_i^{\pm 1}$. Considering if necessary all the elements of $\beta$ instead of $g$ and using Lemma \ref{Lema 10} we may assume that $\#\rho^l(g)=1$ for all $l\geq 1$. It is enough to prove that  there exists $l\in\mathbb{Z}^+$ such that $|\rho^l(g)|<|g|$.

	We are going to argue by induction over $m$, the number of negative letters in $w$. Notice that since we are working on an even Artin group, the number of negative letters in a word is constant for any of its geodesic representatives (see Definition \ref{taumoves}).
	
	As $w= b_i^{n_i^-}w'$ for $k_i$ even, $|n_i^-|\leq m$. Let $k=\min \{|n_j^-|\mid k_j \text{ is even}\}$. If $m=k$, then $w'$ must be positive. But then, $sl(\rho(g))=sl(b_i^{n_i^+}w')$ is positive, and therefore if $\rho^2(g)\in \Omega$, $|\rho^2(g)|<|\rho(g)|=|g|$.
	
	In the general case, $sl(\rho(g))=sl(b_i^{n_i^+}g')$ has less negative letters than $w$, and we distinguish three cases: if $\rho(g)\not\in \Omega$ or $ |\rho(\rho(g))|<|\rho(g)|$  we are done. Otherwise, we may apply the induction hypothesis and we obtain that there exists an $l-1\in\mathbb{Z}^+$ such that either $\rho^{l-1}(\rho(g))\not\in\Omega$ or $|\rho^{l-1}(\rho(g))|<|\rho(g)|=|g|$. That is, either $\rho^l(g)\not\in\Omega$ or $|\rho^{l}(g)|<|g|$.
\end{proof}

\begin{corol}\label{Corol 13}
	Given $g\in \Omega$, there exists $l\in\mathbb{Z}^+$ such that $\rho^l(g)\subset \Lambda$.
\end{corol}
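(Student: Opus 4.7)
The plan is to deduce this corollary from Lemma \ref{Proposition 12} by a straightforward strong induction on $|g|$. Two elementary structural properties of $\rho$ do most of the work. First, $\rho$ preserves unions: directly from its definition one has $\rho(A\cup B)=\rho(A)\cup\rho(B)$, and hence by iteration $\rho^l(g)=\bigcup_{h\in\rho^{l_1}(g)}\rho^{l-l_1}(\{h\})$ for any $0\le l_1\le l$. Second, every element of $\Lambda$ is a fixed point of $\rho$: if $h\in\Lambda$ then $\{h\}\cap\Omega_i^{\pm}=\emptyset$ for every $i$, so $\rho(\{h\})=\{h\}$. Combining these, once an element enters $\Lambda$ under iteration of $\rho$ it remains in $\Lambda$ forever.

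With these two facts in hand, I would argue by strong induction on $|g|$. Given $g\in\Omega$, apply Lemma \ref{Proposition 12} to obtain some $l_1\in\mathbb{Z}^+$ such that every $h\in\rho^{l_1}(g)\cap\Omega$ satisfies $|h|<|g|$. Decompose
\[
\rho^{l_1}(g)=A\sqcup B, \qquad A=\rho^{l_1}(g)\cap\Lambda,\quad B=\rho^{l_1}(g)\cap\Omega.
\]
By the inductive hypothesis, for each $h\in B$ there is $l_h\in\mathbb{Z}^+$ with $\rho^{l_h}(\{h\})\subset\Lambda$. Setting
\[
l=l_1+\max_{h\in B}l_h
\]
(and simply $l=l_1$ if $B=\emptyset$), the additivity of $\rho$ together with the $\rho$-invariance of $\Lambda$ yields
\[
\rho^l(g)=\bigcup_{h\in A}\rho^{l-l_1}(\{h\})\;\cup\;\bigcup_{h\in B}\rho^{l-l_1}(\{h\})\subset\Lambda,
\]
since each term of the first union equals $\{h\}\subset\Lambda$, and each term of the second union is contained in $\Lambda$ because $l-l_1\ge l_h$ and $\Lambda$ is forward invariant.

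There is no genuine obstacle here, and no separate base case is needed: the induction is well founded on $\mathbb{N}$, and at any minimal value of $|g|$ admitted by $\Omega$, no element of $\rho^{l_1}(g)\cap\Omega$ can exist (such an element would have strictly smaller length, contradicting minimality), so $B=\emptyset$ and one concludes immediately with $l=l_1$. The only points requiring any verification are the additivity $\rho(A\cup B)=\rho(A)\cup\rho(B)$ and the pointwise fixing of $\Lambda$, both of which are immediate unwindings of the definition of $\rho$.
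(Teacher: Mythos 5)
Your proof is correct and follows essentially the same route as the paper: strong induction on $|g|$, applying Lemma \ref{Proposition 12} to get a stage $l_1$ after which all remaining $\Omega$-elements have strictly smaller length, then invoking the inductive hypothesis on those and taking the maximum of the resulting exponents. The only difference is that you make explicit the additivity of $\rho$ over unions and the fact that $\Lambda$ is pointwise fixed by $\rho$, which the paper uses implicitly; both follow immediately from the definition of $\rho$.
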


\begin{proof}
	 We will argue by induction over the geodesic length of $g$.
	 If $|w|=0$ the result is obvious.
	 
Let $\gamma=|g|$, and suppose that it is true for length less than $\gamma$. By Lemma \ref{Proposition 12} there exists $l_1\in\mathbb{Z}$ such that for every $g'\in\rho^{l_1}(g)$ either $g'\in \Lambda$ or $|g'|<|g|=\gamma$. Let $\beta=\{g'\in\rho^{l_1}(g)\mid g'\not\in \Lambda\}=\{g'_1,...,g'_\mu\}$, by induction hypothesis there is $l'_i\in\mathbb{Z}$ such that $\rho^{l'_i}(g'_i) \subset \Lambda$ for $i=1,...,\mu$. Let $l_2=\max\{l'_i\}$. Thus, for $l=l_1+l_2$ we have that $\rho^{l}(g)\subset \Lambda$.
\end{proof}

\begin{mydef}
	Given $g\in \Omega$ we define $\alpha(g)$ as the smallest positive integer such that $\rho^{\alpha(g)}(g)\subset \Lambda$.
	This way, we define the map:
		\begin{align}
	\delta: \Omega &\longrightarrow \mathcal{P}(\Lambda) \nonumber\\
	g &\mapsto \delta(g):= \rho^{\alpha(g)}(g) \nonumber
	\end{align}
\end{mydef}

\begin{mydef}
We denote $$H=\langle b_1^{k_1},b_2^{k_2},...,b_n^{k_n}\rangle.$$ 
\end{mydef} 
 
 \begin{obs}
 Notice that the elements in $\delta(g)$ lie in the intersection of the coset $Hg$ with $\Lambda$.
 \end{obs}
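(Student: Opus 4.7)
The plan is to prove this by a straightforward induction on the iteration index, relying only on the explicit formula defining the maps $\rho_i^{\varepsilon}$. The inclusion $\delta(g)\subset\Lambda$ is immediate from the very definition of $\alpha(g)$, so the only thing that needs verification is that every element of $\delta(g)$ lies in the coset $Hg$.

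The key observation I would use is that for any $h\in\Omega_i^{\varepsilon}$ the definition gives $\rho_i^{\varepsilon}(h)=b_i^{-\varepsilon k_i}h$, and since $b_i^{k_i}$ is a generator of $H$ we have $b_i^{-\varepsilon k_i}\in H$. Hence $\rho_i^{\varepsilon}(h)\in Hh$. Combined with the fact that the $\Lambda$-component of $\rho$ fixes elements, it follows that for any $h\in G_1$ every element of $\rho(\{h\})$ lies in $Hh$.

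Iterating this, assume inductively that $\rho^{l-1}(g)\subset Hg$, i.e.\ every element of $\rho^{l-1}(g)$ has the form $h_{j}g$ with $h_{j}\in H$. Then $\rho^{l}(g)=\bigcup_{j}\rho(\{h_{j}g\})$, and by the previous paragraph each $\rho(\{h_{j}g\})\subset Hh_{j}g\subset Hg$, using that $H$ is a subgroup. Taking $l=\alpha(g)$ gives $\delta(g)\subset Hg$, which combined with $\delta(g)\subset\Lambda$ yields the statement. There is essentially no obstacle; the only subtlety is to note that it is precisely $b_i^{k_i}$ (not $b_i$) that sits in the generating set of $H$, matching exactly the shift $b_i^{-\varepsilon k_i}$ performed by $\rho_i^{\varepsilon}$.
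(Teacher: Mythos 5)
Your argument is correct and is exactly the justification the paper leaves implicit (the statement is given as a remark without proof): each application of $\rho_i^{\varepsilon}$ multiplies on the left by $b_i^{-\varepsilon k_i}\in H$, the $\Lambda$-part of $\rho$ fixes elements, so every iterate $\rho^l(g)$ stays in the coset $Hg$, and $\delta(g)\subset\Lambda$ holds by the definition of $\alpha(g)$. Nothing is missing.
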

 
 \begin{lem}\label{subgroupH}
 	$H$ is freely generated by $b_1^{k_1},...,b_n^{k_n}$.
 \end{lem}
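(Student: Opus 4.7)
The approach will be to show that every non-trivial freely reduced word $w=b_{i_1}^{k_{i_1}\epsilon_1}\cdots b_{i_s}^{k_{i_s}\epsilon_s}$ in the proposed generators $\{b_j^{k_j}\}$ (so $s\ge 1$, $\epsilon_j\in\{\pm 1\}$, and $i_j\neq i_{j+1}$) represents a non-identity element of $A_\Gamma$. I will prove the stronger statement that the \emph{natural spelling} of $w$ is a geodesic word in $A_\Gamma$; since its length in the usual generating set is $\sum_j k_{i_j}\ge 2>0$, geodesicity forces $[w]\neq 1$, and hence the $b_j^{k_j}$ freely generate $H$.

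The proof will proceed by induction on $s$. The base case $s=1$ is immediate: a pure power $b_i^{k_i\epsilon}$ is geodesic in $A_\Gamma$, for instance via the retraction $A_\Gamma\to\mathbb{Z}$ sending $b_i\mapsto 1$ and every other generator to $0$ (well defined because all edge labels of $\Gamma$ are even). For the inductive step, set $w'=b_{i_1}^{k_{i_1}\epsilon_1}\cdots b_{i_{s-1}}^{k_{i_{s-1}}\epsilon_{s-1}}$, which is geodesic by the inductive hypothesis. Using Lemma~\ref{Lema 2}, proving that $w=w'\cdot b_{i_s}^{k_{i_s}\epsilon_s}$ is geodesic reduces to proving that $w'\cdot b_{i_s}^{\epsilon_s}$ is geodesic, i.e.\ that $[w']$ has no geodesic representative ending in $b_{i_s}^{-\epsilon_s}$. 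If such a representative existed, then inverting would exhibit two geodesic representatives of $[w']^{-1}$ with distinct initial letters: the natural one, starting with $b_{i_{s-1}}^{-k_{i_{s-1}}\epsilon_{s-1}}$, and a second one starting with $b_{i_s}^{\epsilon_s}$ (these differ in letter because $i_{s-1}\neq i_s$).

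Depending on the signs of $\epsilon_{s-1}$ and $\epsilon_s$, one of Lemmas~\ref{Lema 7},~\ref{Lema 8}, or~\ref{Lema 9} now applies to $[w']^{-1}$. These lemmas will force $b_{i_{s-1}}$ and $b_{i_s}$ to be joined in $\Gamma$ by an edge with even label $2m\ge 4$, impose the lower bound $|w'|\ge k_{i_{s-1}}+2m-1$, and, when this bound is attained with equality, prescribe a uniquely determined form for $[w']^{-1}$ that involves only the two letters $b_{i_{s-1}}$ and $b_{i_s}$. For small $s$ (for instance $s=2$, where $|w'|=k_{i_1}$) the length bound is violated outright, giving the contradiction immediately. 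In the minimum-length case, the prescribed form clashes with the natural form $b_{i_{s-1}}^{-k_{i_{s-1}}\epsilon_{s-1}}\cdot b_{i_{s-2}}^{-k_{i_{s-2}}\epsilon_{s-2}}\cdots$: either $i_{s-2}\notin\{i_{s-1},i_s\}$ and the generator mismatch is direct, or $i_{s-2}=i_s$ and a concrete calculation inside the dihedral parabolic $\langle b_{i_{s-1}},b_{i_s}\rangle\cong A_2(2m)$ produces the contradiction by matching exponents.

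The main obstacle will be the remaining situation, where $|w'|$ strictly exceeds the minimum length bound. There I plan to peel off the prescribed initial block from Lemma~\ref{Lema 7}/\ref{Lema 9}, match it against the leading part of the natural form of $[w']^{-1}$, and apply the inductive hypothesis to the shorter remainder, which is itself a freely reduced product in the $b_j^{k_j}$ of length strictly less than $s-1$. Iterating this reduction eventually lands in the base case or in the violated-bound case just discussed, producing the desired contradiction. This descent is the delicate part of the argument; with it in hand the induction closes and $H$ is free on $b_1^{k_1},\dots,b_n^{k_n}$.
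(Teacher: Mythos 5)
Your reduction of the problem to the statement ``$[w']^{-1}$ cannot have two geodesic representatives, one beginning with $b_{i_{s-1}}^{-k_{i_{s-1}}\epsilon_{s-1}}$ and one beginning with $b_{i_s}^{\epsilon_s}$'' is sound, and Lemmas~\ref{Lema 2}, \ref{Lema 7}, \ref{Lema 8} and~\ref{Lema 9} do dispose of $s=2$ and of any configuration in which $|w'|$ falls below the relevant bound $k_{i_{s-1}}+|t|(2m-1)$. But the descent you propose for the remaining case is a genuine gap, not a deferred verification. Lemmas~\ref{Lema 7} and~\ref{Lema 9} prescribe the form of the word \emph{only when its length equals the stated minimum}; for a word whose length strictly exceeds that minimum their statements give no structural information at all, so there is no ``prescribed initial block'' available to peel off. (A block of the shape ${}_{2m-1}(a,b)\,b^t$ does appear inside the \emph{proof} of Lemma~\ref{Lema 7}, but you would have to re-derive that internal step for your situation rather than cite the lemma.) Worse, even granting such a block, the remainder after removing it contains alternating segments ${}_{2m-1}(b_{i_{s-1}},b_{i_s})$ and is not a freely reduced word in the alphabet $\{b_1^{\pm k_1},\dots,b_n^{\pm k_n}\}$, so the inductive hypothesis --- which is a statement about words in that alphabet --- does not apply to it. As written, the induction does not close.

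The paper avoids all of this with a short normal-form argument: a freely reduced word in the alphabet $\{b_1^{\pm k_1},\dots,b_n^{\pm k_n}\}$, read in the standard generators, has every syllable of length $k_i\ge 2$, so each two-generator subword has $p+n\le 4\le m_{b_i,b_j}$ and, the paper argues, contains no critical subword; by Theorem~\ref{NF2} the word is then already shortlex, and distinct freely reduced words represent distinct elements of $G$. (Even this argument deserves a second look in the boundary case $m_{b_i,b_j}=4$, $k_i=k_j=2$, where $p+n=4$ can be attained and an unsigned critical subword such as ${}_2(b_i,b_j)\,\eta\,(b_i^{-1},b_j^{-1})_2$ is not obviously excluded; one must then check that no reducing \emph{sequence} results.) To salvage your route you would need a structural statement in the spirit of Lemmas~\ref{Lema 7} and~\ref{Lema 9} that is valid above the minimal length, or else switch to the paper's criticality argument.
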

 
 \begin{proof}
 	
 	Observe first that any freely reduced word in the alphabet $\{b_1^{\pm k_1}, b_2^{\pm k_2},...,b_n^{\pm k_n}\}$ can also be seen as a freely reduced word in the alphabet $\{b_1^{\pm 1}, b_2^{\pm 1},..., b_n^{\pm 1}\}$. Now, assume that $w, w'$ are freely reduced words in $\{b_1^{\pm k_1}, b_2^{\pm k_2},...,b_n^{\pm 1}\}$ which represent the same element in $G$. We may assume that $w'$ is shortlex and $w$ is not.
 
 	 Therefore, by Theorem \ref{NF2} $w$ should admit a critical reducing sequence, so it must have a critical subword. But by Definition \ref{critical words} it is impossible to have a critical subword on $b_i^{\pm k_i}, b_j^{\pm k_j}$ if $k_i,k_j\geq 2$ and $m_{b_i,b_j}\geq 4$. Thus $w$ doesn't admit a critical reducing sequence, and $w$ must be shortlex.
 	 Therefore, $w=w'$.
 \end{proof}
 
 \begin{lem}\label{DeltaDifferent}
 Assume $g\in\Omega_i^{\pm}\cap \Omega_j^{\pm}$ and $\hat{w}_i\in \delta(\rho_i^{\pm}(g))$ and $\hat{w}_j\in \delta(\rho_j^{\pm}(g))$, then $\hat{w}_i\neq_G\hat{w}_j$.
 \end{lem}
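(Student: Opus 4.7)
The plan is to reduce the claim to a statement in the free group $H = \langle b_1^{k_1},\ldots,b_n^{k_n}\rangle$, which is free by Lemma \ref{subgroupH}. First, I observe that $i\neq j$: since $\Omega_i^+\cap\Omega_i^-=\emptyset$ by Remark \ref{obs2}, the two $\Omega$-sets containing $g$ must have different lower indices. Unwinding the definition of $\delta$, every element of $\delta(\rho_i^{\epsilon}(g))$ arises from $g$ by a finite sequence of left-multiplications by generators of $H$, since each application of $\rho_\ell^{\eta}$ prepends the factor $b_\ell^{-\eta k_\ell}$. Thus
\[
\hat{w}_i = h_i\, g, \qquad h_i = b_{\ell_s}^{-\eta_s k_{\ell_s}}\cdots b_{\ell_2}^{-\eta_2 k_{\ell_2}}\,b_i^{-\epsilon k_i}\in H,
\]
where the rightmost factor $b_i^{-\epsilon k_i}$ corresponds to the initial step $\rho_i^{\epsilon}$; symmetrically $\hat{w}_j = h_j g$ with $h_j\in H$ having rightmost factor $b_j^{-\delta k_j}$ in its analogous product.

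The key step is to show that the freely reduced form of $h_i$ in $H$ still ends on the right with $b_i^{-\epsilon k_i}$ (and analogously for $h_j$). I would argue by induction on the number of $\rho$-steps, writing $h_i^{(t)}$ for the partial product after $t$ steps. Prepending a single generator of $H$ to a reduced word causes at most one cancellation and only at the leftmost position, so the only way the rightmost factor $b_i^{-\epsilon k_i}$ can be destroyed is if at some intermediate stage $h_i^{(t)}$ has length one in reduced form and is cancelled entirely by the next prepended factor, forcing $h_i^{(t+1)}=1$ in $H$. To rule this out, I fix a lexicographic order with $v<v^{-1}$ for every vertex $v$, as in Lemma \ref{LexCond}; then each $\rho$-step strictly decreases shortlex, so $h_i^{(t)} g\neq g$ in $G$ at every stage, whence $h_i^{(t)}\neq 1$ in $G$ and, since $H$ embeds faithfully in $G$ by Lemma \ref{subgroupH}, also $h_i^{(t)}\neq 1$ in $H$.

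With this established, the reduced forms of $h_i$ and $h_j$ in $H$ have distinct rightmost factors $b_i^{-\epsilon k_i}\neq b_j^{-\delta k_j}$ (because $i\neq j$), so $h_i\neq h_j$ in the free group $H$, and therefore $\hat{w}_i = h_i g \neq h_j g = \hat{w}_j$ in $G$. The hard part will be precisely the cancellation analysis in the previous paragraph: verifying that the rightmost factor truly survives every free cancellation in $H$. Lemma \ref{LexCond} is the essential ingredient, ruling out the degenerate total-cancellation scenario at any intermediate stage.
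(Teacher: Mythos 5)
Your proposal is correct and follows essentially the same route as the paper: write $\hat{w}_i=h_ig$, $\hat{w}_j=h_jg$ with $h_i,h_j\in H$, and use the freeness of $H$ (Lemma \ref{subgroupH}) to conclude that they differ because the factors contributed by the initial steps $\rho_i^{\pm}$, $\rho_j^{\pm}$ are distinct. The paper simply asserts that $h_i$ and $h_j$ are distinguished by these letters, whereas you additionally verify that the distinguishing factor survives free reduction in $H$ (via the strict shortlex decrease of Lemma \ref{LexCond} ruling out total cancellation); this is a worthwhile elaboration of a step the paper leaves implicit, not a different argument.
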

 
 \begin{proof}
 	Note that $\hat{w}_i,\hat{w}_j$ both lie in the coset $Hg$. Notice as well that $\delta(\rho_i^{\pm}(g)), \delta(\rho_j^{\pm}(g)) \subset \delta(g)$. Since $\hat{w}_i\in\delta(g)$, then $\hat{w}_i=_G h_ig_i$ ($h_i\in H$  beginning with $b_i^{\pm}$). Analogously, $\hat{w}_j=_G h_jg_j$ ($h_j\in H$ beginning with $b_j^{\pm})$. Then, $\hat{w}_i=_G\hat{w}_j$ implies that $h_i=_G h_j$, but this is impossible since $H$ is free by Lemma \ref{subgroupH} and $h_i, h_j$ begin with different letters.
 \end{proof}
 
 \begin{lem}\label{lema length}
 	The minimal geodesic length of an element $g\in \Omega_i^{\pm}\cap \Omega_j^{\pm}$, $i\neq j$ is bounded below by $|n_j^-|+|n_i^-|(2m-1)$ where $|n_j^-|\geq|n_i^-|$ and $m=m_{b_i,b_j}$. Moreover, there is always an element $g$ of that geodesic length in the set $\Omega_i^{-}\cap \Omega_j^{-}$.
 \end{lem}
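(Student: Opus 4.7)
The plan is to case-split on the signs $\epsilon, \delta \in \{+,-\}$ of the two intersections containing $g$, and in each case read off the lower bound from the length lemmas of Section~\ref{Section3}. For the realisability part, I will exhibit an explicit element of $\Omega_i^- \cap \Omega_j^-$ meeting the bound.

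Suppose $g \in \Omega_i^\epsilon \cap \Omega_j^\delta$ with $i \neq j$, and adopt the labeling convention $|n_j^-| \geq |n_i^-|$. If $\epsilon = \delta = -$, then $g$ has geodesic representatives of the form $b_i^{n_i^-} w_1$ and $b_j^{n_j^-} w_2$, and the negative case of Lemma~\ref{Lema 7} (with $a = b_j$, $b = b_i$, $s = n_j^-$, $t = n_i^-$) yields $|g| \geq |n_j^-| + |n_i^-|(2m-1)$ directly. If $\epsilon = \delta = +$, the positive case of Lemma~\ref{Lema 7} gives $|g| \geq \max(p_i^+, p_j^+) + \min(p_i^+, p_j^+)(2m-1)$, and the inequality $p_k^+ \geq |n_k^-|$ from Remark~\ref{obs1} dominates the required bound. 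In any mixed case, Lemma~\ref{obs2b} forces the negative index to have $|n^-| = 1$; combined with the convention $|n_j^-| \geq |n_i^-|$ this pins down where the $1$ must sit. Applying the appropriate alternative of Lemma~\ref{Lema 9} with $t$ equal to the positive exponent $p_*^+$ of the $+$-index yields $|g| \geq p_*^+ + (2m-1)$, and $p_*^+ \geq |n_*^-|$ once more supplies exactly the slack needed to beat $|n_j^-| + |n_i^-|(2m-1)$.

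For the second half, I apply the explicit description in the negative case of Lemma~\ref{Lema 7} with $a = b_j$, $b = b_i$, $s = n_j^-$, $t = n_i^-$, producing geodesic representatives
$$w = b_i^{n_i^-}\left[{}_{2m-1}(b_j^{-1}, b_i^{-1})\right]^{|n_i^-|} b_j^{-|n_j^- - n_i^-|}, \qquad \hat{w} = b_j^{n_j^-}\left[{}_{2m-1}(b_i^{-1}, b_j^{-1})\right]^{|n_i^-|}$$
of the same element $g$, both of length $|n_j^-| + |n_i^-|(2m-1)$. Since $w$ begins with $b_i^{n_i^-}$ and $\hat{w}$ begins with $b_j^{n_j^-}$, this element $g$ lies in $\Omega_i^- \cap \Omega_j^-$ and realises the bound.

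The main delicacy is the mixed case, where one has to track how the convention $|n_j^-| \geq |n_i^-|$ interacts with the forcing $|n^-| = 1$ coming from Lemma~\ref{obs2b} (which may sit on either index), select the correct alternative of Lemma~\ref{Lema 9}, and verify in every sub-configuration that $p_*^+ \geq |n_*^-|$ is enough to absorb the difference between $p_*^+ + (2m-1)$ and the target $|n_j^-| + |n_i^-|(2m-1)$.
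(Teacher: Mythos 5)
Your proposal is correct and follows essentially the same route as the paper: a four-way case split on the signs, with Lemma \ref{Lema 7} handling the two same-sign cases, Lemma \ref{obs2b} (equivalently Lemma \ref{Lema 8}) forcing $|n^-|=1$ on the negative index so that Lemma \ref{Lema 9} and $p^+\geq |n^-|$ close the mixed cases, and the explicit minimal word from the negative case of Lemma \ref{Lema 7} witnessing realisability in $\Omega_i^-\cap\Omega_j^-$. The only (harmless) difference is that you do not record when the inequalities are equalities, which the paper notes because it is reused in Remark \ref{FirstStepInduction}.
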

 
 \begin{proof}
 	Assume first that $g\in \Omega_i^+\cap\Omega_j^-$. Then, by Lemma \ref{obs2}, $n_j^-=-1$ thus also $n_i^-=-1$. Using Lemma \ref{Lema 9} we have that the geodesic length of $g$ is at least \begin{equation}\label{Eq0}|p_i^+|+(2m-1)
 	> 1+(2m-1)=|n_j^-|+|n_i^-|(2m-1)
 	\end{equation} where the inequality is strict since $p_j^+\geq 2$ by Remark \ref{obs1} (1).
 	
 	 Now assume that $g\in \Omega_i^-\cap\Omega_j^+$, by Lemma \ref{Lema 8} and Remark \ref{obs1} (1) $n_i^-=-1$. By Lemma \ref{Lema 9} the minimal geodesic length of such an element is \begin{equation}\label{EqA}
 	 |p_j^+|+(2m-1)\geq |n_j^-|+(2m-1)=|n_j^-|+|n_i^-|(2m-1)\end{equation} where by Lemma \ref{numeritos} the equality holds if and only if $k_j$ is odd.
 	
 	If $g\in \Omega_i^+\cap\Omega_j^+$, by Lemma \ref{Lema 7} the minimal geodesic length of $g$ is \begin{equation}
 	\label{EqB} |p_j^+|+|p_i^+|(2m-1)\geq |n_j^-|+|n_i^-|(2m-1) 
 	\end{equation} where by Lemma \ref{numeritos} the equality holds if and only if $k_i, k_j$ are odd.
 	
 	Finally, if $g\in \Omega_i^-\cap\Omega_j^-$, by Lemma \ref{Lema 7} the minimal geodesic length of $g$ is $$|n_j^-|+|n_i^-|(2m-1)$$ and also by Lemma \ref{Lema 7} we know that there exists an element of this length satisfying  $g\in\Omega_i^-\cap\Omega_j^-$.
 	\end{proof}
 
 \begin{obs}\label{FirstStepInduction}
 	Let $t=n_i^-$, $s=n_j^-$, $b=v_i, a=v_j$.
 	If $k_j$ is even, in Lemma \ref{lema length} the inequalities (\ref{Eq0}), (\ref{EqA}) and (\ref{EqB}) are strict and therefore there is only one element $g\in \Omega_i^{\pm}\cap \Omega_j^{\pm}$, $i\neq j$ with geodesic length $|n_j^-|+|n_i^-|(2m-1)$. This element is: $$b^t[_{2m-1}(a^{-1},b^{-1})]^{|t|}a^{-|s-t|}=_G  a^s[_{2m-1}(b^{-1},a^{-1})]^{|t|}$$
 	
 	If $k_j$ is odd and $k_i$ is even, then $k_i\geq 2$ and therefore by Lemma \ref{obs2b} $\Omega_i^+\cap \Omega_j^-=\Omega_i^-\cap \Omega_j^+=\emptyset$. The inequality (\ref{EqB}) is strict. Therefore there is again only one element $g\in \Omega_i^{\pm}\cap \Omega_j^{\pm}$ of geodesic length $|n_j^-|+|n_i^-|(2m-1)$. This element is
 	$$b^t[_{2m-1}(a^{-1},b^{-1})]^{|t|}a^{-|s-t|}=_G a^s[_{2m-1}(b^{-1},a^{-1})]^{|t|}$$

 	If $k_i, k_j$ are both odd numbers, applying Lemmas \ref{Lema 7}, \ref{Lema 8} and \ref{Lema 9}, we obtain that there are two elements of that geodesic length in $\Omega_i^{\pm}\cap \Omega_j^{\pm}$:
 		$$b^t[_{2m-1}(a^{-1},b^{-1})]^{|t|}a^{-|s-t|}=_G a^s[_{2m-1}(b^{-1},a^{-1})]^{|t|},$$
 		$$b^{-t}[_{2m-1}(a,b)]^{|t|}a^{|s-t|}=_G a^{-s}[_{2m-1}(b,a)]^{|t|}.$$

 	In this way, if we consider the order $a<a^{-1}<b<b^{-1}<...$ we have that the shortlex representative in $\Omega_i^{\pm}\cap \Omega_j^{\pm}$ is :
 	
 	\begin{itemize}
 		\item If $k_i, k_j$ are both odd numbers, $$b^{-t}[_{2m-1}(a,b)]^{|t|}a^{|s-t|}=_G a^{-s}[_{2m-1}(b,a)]^{|t|}$$ (which is a positive word).
 		
 		\item Otherwise, $$b^t[_{2m-1}(a^{-1},b^{-1})]^{|t|}a^{-|s-t|}=_G a^s[_{2m-1}(b^{-1},a^{-1})]^{|t|}$$ (which is a negative word).
 	\end{itemize}
 	
 \end{obs}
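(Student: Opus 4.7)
The plan is to analyse the four types of intersections $\Omega_i^{\varepsilon}\cap\Omega_j^{\delta}$ case by case, using the sharpened versions of the inequalities (\ref{Eq0}), (\ref{EqA}) and (\ref{EqB}) from the proof of Lemma \ref{lema length}, and then invoke the uniqueness clauses in Lemmas \ref{Lema 7} and \ref{Lema 9} to pin down the minimal-length elements. The key numerical input is Remark \ref{obs1}(4) combined with Lemma \ref{numeritos}: one has $p_i^+=|n_i^-|$ precisely when $k_i$ is odd, and $p_i^+=|n_i^-|+1$ when $k_i$ is even.

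When $k_j$ is even, $p_j^+>|n_j^-|$ makes (\ref{EqA}) and (\ref{EqB}) strict, while (\ref{Eq0}) is strict anyway since $p_j^+\geq 2>1$, so the minimal length $|n_j^-|+|n_i^-|(2m-1)$ is attained only by elements of $\Omega_i^-\cap\Omega_j^-$; the uniqueness part of Lemma \ref{Lema 7} applied with $s=n_j^-$ and $t=n_i^-$ (both negative) produces the stated negative word. When $k_j$ is odd but $k_i$ is even, Lemma \ref{obs2b} empties the two mixed intersections and (\ref{EqB}) remains strict (it needs both parities odd), so once more only $\Omega_i^-\cap\Omega_j^-$ contributes and gives the same element. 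When both $k_i$ and $k_j$ are odd, Lemma \ref{obs2b} again empties the mixed intersections but now (\ref{EqB}) achieves equality, so Lemma \ref{Lema 7} yields a second minimal element, this time in $\Omega_i^+\cap\Omega_j^+$; rewriting the exponents via $p_i^+=-n_i^-=-t$ and $p_j^+=-n_j^-=-s$ turns Lemma \ref{Lema 7}'s formula into the positive word displayed in the statement.

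For the shortlex claim, fix the order $a<a^{-1}<b<b^{-1}<\ldots$. In the first two cases the single minimal element has geodesic representatives beginning both with $a^{-1}$ and with $b^{-1}$; since $a^{-1}<b^{-1}$, its shortlex representative must begin with $a^{-1}$, and a direct check through Theorem \ref{NF2} shows that $a^s[_{2m-1}(b^{-1},a^{-1})]^{|t|}$ admits no leftward lex-reducing or rightward length-reducing sequence, so it is the shortlex form. In the odd-odd case we compare the shortlex representatives of the two minimal elements: the positive one has a geodesic representative beginning with $a$ (the first letter of the order), so its shortlex representative begins with $a$, whereas the negative element's shortlex representative begins with $a^{-1}$. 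Hence the positive element is lex-smaller among the two, and the same verification via Theorem \ref{NF2} identifies $a^{-s}[_{2m-1}(b,a)]^{|t|}$ as its shortlex form.

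The principal technical hurdle will be the shortlex verifications at the end: one must show that the blocks $[_{2m-1}(b^{\pm 1},a^{\pm 1})]^{|t|}$ contain no critical dihedral subword whose $\tau$-image either admits a free reduction or is strictly lex-smaller. This is a routine combinatorial check on the possible positions of two-generator critical subwords inside such a power, but it is the most calculation-intensive step; the parity case split driven by $k_i$ and $k_j$ supplies the conceptual backbone of the argument.
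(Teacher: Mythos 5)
Your reconstruction follows the same route as the paper's own justification of this remark: the parity case split, the strictness analysis of (\ref{Eq0}), (\ref{EqA}) and (\ref{EqB}), Lemma \ref{obs2b} to discard the mixed intersections, the uniqueness clauses of Lemmas \ref{Lema 7} and \ref{Lema 9}, and the first-letter comparison (via Lemma \ref{Initials}) for the shortlex claim. The cases ``$k_j$ even'' and ``$k_i,k_j$ both odd'' are handled correctly, and your observation that a full shortlex verification of the displayed spellings is not really needed (only the identity of the element and the first letter of its shortlex representative matter) is fair.

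There is, however, a genuine gap in the middle case, which you inherit verbatim from the remark itself: the assertion that Lemma \ref{obs2b} ``empties the two mixed intersections'' when $k_j$ is odd and $k_i$ is even. Lemma \ref{obs2b} gives $\Omega_i^-\cap\Omega_j^+=\emptyset$ only when $k_i\neq 2$; the hypothesis ``$k_i$ even'' allows $k_i=2$, in which case $n_i^-=-1$ and this intersection is nonempty. Worse, for $k_i=2$ and $k_j$ odd the inequality (\ref{EqA}) is an equality (since $p_j^+=|n_j^-|$ by Remark \ref{obs1}(4)), and Lemma \ref{Lema 9} then produces a second element of the minimal length $|n_j^-|+|n_i^-|(2m-1)$, namely $a^{p_j^+}\,{}_{2m-1}(b^{-1},a^{-1})=_G {}_{2m-1}(b^{-1},a^{-1})\,a^{p_j^+}$, which lies in $\Omega_j^+\cap\Omega_i^-$ and is distinct (by Lemma \ref{Lema 1}) from the stated negative word $a^{s}\,{}_{2m-1}(b^{-1},a^{-1})$. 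Since the shortlex representative of this extra element begins with $a$, the first letter of the chosen order, it would moreover displace the negative word as the shortlex-minimal element of $\Omega_i^{\pm}\cap\Omega_j^{\pm}$. So either the subcase $k_i=2$, $k_j$ odd must be excluded by the context in which the remark is applied, or it requires a separate treatment; your argument as written does not close it.
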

 
 Recall that we are considering the group:
 $$K:=\langle\langle r\rangle \rangle=\langle r^{g}; g\in G_1 \mid \hat{R}\rangle $$
 
 with 
 \begin{align}
 &\hat{R}^{+}=\left\{r^{b_{i}^{p_i^+}g}=r^{b_{i}^{p_i^+-1}g}...r^{b_i^{p_i^-}g}...\left(r^{b_{i}^{p{i}-1}g}\right)^{-1};  b_i^{p_i^+}g\in\Omega_i^{+} \text{ for some } 1\leq i \leq n\right\} \nonumber\\
 &\hat{R}^{-}=\left\{r^{b_{i}^{n_i^-}g}=\left(r^{b_i^{n_i^-+1}g}\right)^{-1}...r^{b_i^{n_i^+}g}...r^{b_i^{n_i^-+1}g}; b_i^{n_i^-}g\in\Omega_i^{-} \text{ for some } 1\leq i \leq n\right\} \nonumber\\
 & \hat{R}=\hat{R}^{+}\cup \hat{R}^{-} \nonumber                     
 \end{align}
 
 Notice that the relations of $\hat{R}^+$ and $\hat{R}^-$ have the following form:
	\begin{align}
	&\hat{R}^{+}=\{r^{h}=\alpha r^{\rho_i^+(h)}\alpha^{-1};  h\in\Omega_i^{+} \text{ for some } 1\leq i \leq n\} \nonumber\\
	&\hat{R}^{-}=\{r^{h}=\alpha r^{\rho_i^{-}(h)}\alpha^{-1}; h\in\Omega_i^{-} \text{ for some } 1\leq i \leq n\} \nonumber\\
	& \hat{R}=\hat{R}^{+}\cup \hat{R}^{-} \nonumber                     
	\end{align}

\begin{prop}\label{freeness}
	$K$ is a free group.
\end{prop}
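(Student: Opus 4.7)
The plan is to prove that $\{r^g : g \in \Lambda\}$ is a free generating set for $K$. Showing that these elements generate $K$ is straightforward: given any $g \in G_1$, if $g \in \Omega$ then Corollary \ref{Corol 13} supplies an integer $l \geq 1$ with $\rho^l(g) \subseteq \Lambda$, and each application of a relation in $\hat R$ rewrites $r^g$ as a word in the generators $r^{g'}$ with $g' \in \rho(g)$. After $l$ iterations, $r^g$ is expressed as a word in $\{r^{g'} : g' \in \Lambda\}$.

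For freeness, I fix the lexicographic order on $V(\Gamma)^{\pm 1}$ with $v <_{lex} v^{-1}$ for every vertex $v$, so that Lemma \ref{LexCond} gives $\rho_i^{\pm}(g) <_{slex} g$ for every $g \in \Omega_i^{\pm}$. A routine length estimate extends this to every intermediate term $b_i^{-j} g$ appearing in the right-hand side of $R(g)$ for $1 \leq j \leq k_i$: each such $b_i^{-j}g$ is strictly shorter than $g$ as a word, hence its shortlex representative is $<_{slex} sl(g)$. Every relation in $\hat R$ therefore expresses its "top" generator $r^h$ in terms of generators $r^{h'}$ with $sl(h') <_{slex} sl(h)$. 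I then perform Tietze transformations to eliminate the generators $r^g$ for $g \in \Omega$, processing them in decreasing shortlex order. When $g$ lies in a unique $\Omega_i^{\pm}$, the single relation $R(g)$ eliminates $r^g$ cleanly, contributing no new relation among the remaining generators.

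The main obstacle is the case $g \in \Omega_i^{\pm} \cap \Omega_j^{\pm}$ with $i \neq j$ (by Lemma \ref{Max3}, these are the only overlaps), where two relations $R_i(g)$ and $R_j(g)$ both express $r^g$; eliminating via $R_i(g)$ turns $R_j(g)$ into a derived relation among strictly shortlex-smaller generators. The technical heart of the proof is to show that this derived relation is already a consequence of the relations already in place, so that no genuinely new relation is introduced. For this I plan to use Remark \ref{FirstStepInduction}, which gives an explicit description of the minimal-length element(s) of each intersection $\Omega_i^{\pm} \cap \Omega_j^{\pm}$, and to verify the base case by hand using the Artin relation between $b_i$ and $b_j$ (which, being an even relation, can be rewritten as a conjugation identity in $G$ and hence lifts to an identity between $R_i$ and $R_j$). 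For larger $g$ in the intersection, the argument proceeds by induction on $sl(g)$, tracking the two descent trees through $\rho_i$ and $\rho_j$ and using Lemma \ref{DeltaDifferent} together with Lemma \ref{Lema 10} to ensure the inductive hypothesis applies to each further descendant. Once all derived relations have been reduced to trivial consequences of the previously eliminated ones, the residual presentation is $\langle r^g : g \in \Lambda \mid \emptyset \rangle$, so $K$ is free on $\{r^g : g \in \Lambda\}$.
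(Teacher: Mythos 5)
Your first half (eliminating the generators $r^g$ with $g\in\Omega$ via the relations $R(g)$, and isolating the double-membership case $g\in\Omega_i^{\pm}\cap\Omega_j^{\pm}$ as the crux) matches the paper. But the way you resolve that crux is not just different from the paper's — it is wrong. You claim that after eliminating $r^g$ via $R_i(g)$, the derived relation coming from $R_j(g)$ is a consequence of relations already in place, so that $K$ ends up free on all of $\{r^g: g\in\Lambda\}$. It is not. Take the triangle $(4,4,4)$ with $k_1=k_2=2$ and $h_1=(b_1b_2)^{-2}=_{G}(b_2b_1)^{-2}\in\Omega_1^-\cap\Omega_2^-$. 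The two relations give
\begin{equation*}
\left(r^{g}\right)^{-1} r^{b_1 g}\, r^{g}=\left(r^{g'}\right)^{-1} r^{b_2 g'}\, r^{g'},\qquad g=b_2^{-1}b_1^{-1}b_2^{-1},\ g'=b_1^{-1}b_2^{-1}b_1^{-1},
\end{equation*}
and here $b_1g$, $b_2g'$, $g$, $g'$ all lie in $\Lambda$ with $b_1g\neq_G b_2g'$ (this is exactly what Lemma \ref{DeltaDifferent} guarantees, via freeness of $H$). This is a nontrivial word in the free group on $\{r^g:g\in\Lambda\}$, so it cannot be a consequence of the empty set of relations, and your proposed base-case verification "by hand using the Artin relation between $b_i$ and $b_j$" cannot succeed. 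The paper's proof does the opposite of what you propose: it shows each derived relation is \emph{genuinely new} and has the specific form $r^{\tilde w_j}=\gamma r^{\tilde w_i}\gamma^{-1}$ with $\tilde w_i\neq_G\tilde w_j$ in $\Lambda$, and then uses each such relation to Tietze-eliminate one further generator $r^{\tilde w_j}$, producing a nested family $\Lambda=\Lambda_0\supset\Lambda_1\supset\cdots$ with nonempty intersection; $K$ is free on $\bigcap_k\Lambda_k$, a proper subset of $\Lambda$. Making this work requires the machinery you do not invoke ($\rho$, $\delta$, $H$, Lemmas \ref{Lema 10}, \ref{LexCond}, \ref{lema length}) to ensure the eliminated generators are pairwise distinct and that the process, run in \emph{increasing} shortlex order on $\Omega$, never eliminates a generator twice.

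Two smaller problems: your "routine length estimate" that every intermediate term $b_i^{-j}g$ in $R(g)$ is strictly shorter than $g$ is false in general — Lemma \ref{obs4}(ii) shows $|\rho_i^-(g)|=|g|$ can occur when $k_i$ is even, which is precisely why the paper needs Lemma \ref{LexCond} and the shortlex (not length) descent. And processing the infinite set $\Omega$ "in decreasing shortlex order" is not well-defined, since shortlex on an infinite set has no maximal element; the induction must run upward.
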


\begin{proof}
	We are going to prove it using Tietze transformations. We order the vertices  $b_1,...,b_n$ linked to $r$, in such way that $k_1\leq k_2 \leq ...\leq k_n$ and we consider the lexicographic order $$b_1<b_1^{-1}<b_2<b_2^{-1}<...<b_n<b_n^{-1}<c_1<c_1^{-1}<...<c_k<c_k^{-1}.$$ Notice that the choice of this order is  consistent with the results of Section \ref{Section3}, since there we only used the chosen orders as a technical tool to prove the possible existence or not of determinate kinds of geodesic representatives, but the results themselves didn't depend on the chosen order.
	
		Firstly, let us consider those generators of the form $r^{g}$, such that $g\in \Omega$. For such an element there is at least one relation in $R$ which is of the form $r^{g}=\alpha r^{\rho_i^{\pm}(g)}\alpha^{-1}$ (see Remark \ref{obs5}), it will belong to $R^{+}$ or $R^{-}$ depending on whether $g$ has a geodesic representative beginning with $b_i^{p_i^+}$ or $b_{i}^{n_i^-}$ respectively ($i=1,2,...,n$). Thus, using Tietze transformations we can erase that relation and the generator $r^{g}$. This can be done to erase every generator $r^{g}$ with $g\in \Omega$, but maybe not every relation of $R$. Because if $g$ lies in two of the sets $\Omega_i^{\pm}$, $\Omega_j^{\pm}$ (remind that by Lemma \ref{Max3}, any element can belong to at most two sets $\Omega_i^{\pm}$), then after using Tietze transformations we are left with a relation of the following form: 
	\begin{equation}\label{Form}
\alpha^{-1} r^{\rho_i^{s}(w_1)}\alpha=\beta^{-1} r^{\rho_j^{t}(w_2)}\beta.
	\end{equation}

	We define $\tilde{R}$ as the set of remaining relations after this process. Recall that $\Lambda=G_1\setminus \Omega$.

	After this process we get a presentation:
	$$K=\langle r^{g}; g\in G_1, g\in\Lambda \mid \tilde{R}\rangle.$$

	Now we will see that we can also get rid of the relations in $\tilde{R}$. Notice that $$\Omega:=\bigcup\left\{ \Omega_i^{\pm}\cap\Omega_j^{\pm}\mid i,j=1,...,n, i\neq j \right\}\subset\left\{g\in G_1\mid \#(Hg\cap\Lambda)>1\right\}$$
	since the elements in $\Omega$ are of the form $b_{j_1}^{\epsilon_1 k_{j_1}}...b_{j_n}^{\epsilon_n k_{j_n}}g$ and if two expressions represent the same group element, then by Lemma \ref{DeltaDifferent} the $b_{j_i}'s$ must also be equal. Notice that there is a bijection between the relations in $\hat{R}$ and the elements of $\Omega$ since each relation appears when we have an element in $\Omega$.  Also recall that for any $g\in\Omega$ we have $\# \rho(sl(g))=2$ by Lemma \ref{obs3} (ii). 
	
	Let us explain a little the strategy that we are going to follow. At this point we have a presentation of the group $K$ with:
	
	\begin{enumerate}
		\item Set of generators $\{r^{g}, g\in\Lambda\}$, bijective to $\Lambda$.
		
		\item Set of relators $\tilde{R}=\{rel(h)\mid h\in\Omega\}$, bijective to $\Omega$.
	\end{enumerate}
	
	We want to show that it is possible to remove all the relators and some generators using Tietze transformations. To do that we proceed inductively. More precisely, we first order the elements of $\Omega$ using the shortlex order:	
	$$h_1<h_2<...<h_i<...$$
	
	To each element $h_i$ we associate a $u_i\in \Lambda$ such  that the relator $rel(h_i)$ can be written as:
	$$r^{u_i}= \gamma r^{w_i} \gamma^{-1}$$
	where $\gamma$ is a word in the alphabet $\{r^{g}, g\in \Lambda -\{u_1,...,u_i\}\}$.

	Notice that by the order that we have established in the vertices, we need to take $h_1\in \Omega_1^{\pm}\cap \Omega_2^{\pm}$. By Remark \ref{FirstStepInduction} for the element $h_1$ we have the following possibilities:
	
	\begin{itemize}
		\item If $k_1, k_2$ are both odd numbers, then $h_1$ admits the following two geodesic representatives:
		
		$w_1=b_1^{p_1^+}$$ _{2m-1}(b_2,b_1)^{p_2^+}b_2^{p_2^+-p_1^+}$, $w_2=b_2^{p_2^+} $$_{2m-1}(b_1,b_2)^{p_2^+}$.
		
		\item Otherwise, $h_1$ admits the following two geodesics representatives: $w_1=b_1^{n_1^-}$$ _{2m-1}(b_2^{-1},b_1^{-1})^{|n_2^-|}b_2^{n_2^--n_1^-}$, $w_2=b_2^{n_2^-} $$_{2m-1}(b_1^{-1},b_2^{-1})^{|n_2^-|}$.
	\end{itemize}
	 Where in both cases $m=m_{12}$. 
		
		 We know that $|\rho_s^-(w_s)|=|w_s|$ if and only if $k_s$ is even and that $|\rho_s^+(w_s)|<|w_s|$. Then, we can distinguish the following cases:
		
		\begin{enumerate}
			\item If $k_1, k_2$ are odd numbers, we have that for $s=1,2$,  $|\rho_s^+(w_s)|<|w_s|$, which implies by minimality that $\rho_s^+(w_s)\not\in \Omega$. Thus $\# \delta (\rho_s^+(w_s))=1$, so $\delta(h_1)=\{\delta(\rho_1^+(w_1)),\delta(\rho_2^+(w_2))\}$ has only two elements.
			
			\item If $k_2$ is even and $k_1$ is odd (or the other way around), then, $|\rho_1^-(w_1)|<|w_1|$, so $\rho_1^-(w_1)\not\in\Omega$ and $\#\delta(\rho_i^-(w_i))=1$. And $|\rho_2^-(w_2)|=|w_2|$ and by, Lemma \ref{Lema 10}, $\rho_2^-(w_2)\not\in\Omega$ and, since by Lemma \ref{LexCond} $sl(\rho_2^-(w_2))<_{slex} sl(w_2)$, and $w_1=_G w_2$ are the shortlex minimal words in $\Omega$, we have that $\# \delta(\rho_2^-(w_2))=1$. So, $\delta(h_1)=\{\delta(\rho_1^-(w_1)),\delta(\rho_2^-(w_2))\}$ has only two elements.
			
			\item If $k_1, k_2$ are both even, then for $s=1,2$ $|\rho_s^-(w_s)|=|w_s|$ and, by Lemma \ref{Lema 10}, $\rho_s^-(w_s)\not\in\Omega$ and again since by Lemma \ref{LexCond} $sl(\rho_s^-(w_s))<_{slex} sl(w_s)$ and $w_1=_G w_2$  are the shortlex minimal words in $\Omega$, we have that $\# \delta(\rho_s^-(w_s))=1$. So, $\delta(h_1)=\{\delta(\rho_1^-(w_1)),\delta(\rho_2^-(w_2))\}$ has only two elements.
		\end{enumerate}
		
		So, in every possible case $\delta(h_1)=\{\hat{w}_1,\hat{w}_2\}\subset \Lambda$ and both lie in the coset $Hh_1$ in $\Lambda$. Besides, $\hat{w}_1\neq_G \hat{w}_2$ by Lemma \ref{DeltaDifferent}.
		
		Therefore, we may assume $sl(\hat{w}_2)>_{slex} sl(\hat{w}_1)$ and using a Tietze transformation we can erase the relation and the generator $r^{\hat{w}_2}$.
		
We define the set $\Lambda_{1}=\Lambda\setminus \{\hat{w}_2\}$. Now, we can define the natural projection $\pi_1:\mathcal{P}(\Lambda) \rightarrow \mathcal{P}(\Lambda_1)$ given by $\pi_1(A)=A\cap \Lambda_1$. After this, we define $\delta_1=\pi_1 \circ \delta: \Omega\longrightarrow \mathcal{P}(\Lambda_1)$.

		We are going to prove that we can construct a family of subsets $\Lambda=\Lambda_0\supset\Lambda_1 \supset \Lambda_2 \supset...$ such that for each element $g$ of $G_{1}$ if $sl(g)<_{slex} sl(h_{k})$, then $\#(\Lambda_{l}\cap \delta(g))= 1$ for every $l \geq k-1$. Once we have constructed these sets, we can define the applications $\pi_k:\mathcal{P}(\Lambda) \rightarrow \mathcal{P}(\Lambda_k)$ and $\delta_k=\pi_k \circ \delta: \Omega\longrightarrow \mathcal{P}(\Lambda_k)$.
	\begin{enumerate}
		
		\item For $k=1$, take $\Lambda_1$.

		\item Assume $\Lambda_1,...,\Lambda_{k-1}$ have been constructed. For $k$ we know that $\#(\Lambda_{k-1}\cap \delta(g))=1$ for every $g$ such that $sl(g)<_{slex} sl(h_{k})$. Also $h_k\in \Omega_i^{\pm}\cap \Omega_j^{\pm}$ for some $i,j\in\{1,...,n\}$ with $i\neq j$. At a first step we have two geodesic representatives of $h_k$: $w_i$ beginning with $b_i^{p_i^+}$ or $b_i^{n_i^-}$ and $w_j$ beginning with $b_j^{p_j^+}$ or $b_j^{n_j^-}$.
		
		By Lemma \ref{Proposition 12} for each $s=i,j$ there exist $l_s\in \mathbb{Z}$ such that $|\rho^{l_s}(w_s)|<|w_s|$ (and $|\rho^{d_s}(w_s)|=|w_s|$ for $d_s<l_s$). Note that by Lemma \ref{Lema 10} each of the elements represented by $w_s$, $\rho(w_s)$,..., $\rho^{l_s-1}(w_s)$ lies at most on one of the sets $\Omega_r^{\pm}$, $r\in\{1,...,n\}$.
		
		Therefore, $sl(\rho^{l_s}(w_s))<_{slex} sl(h_k)$ and by construction of $\Lambda_{k-1}$, $\Lambda_{k-1}\cap (\rho^{l_i}(w_s))=\tilde{w}_s$. Therefore $\delta_{k-1}(h_k)\cap \Lambda_{k-1}=\{\tilde{w}_i, \tilde{w}_j\}$ (and both must be different by Lemma \ref{DeltaDifferent}).
		
		Now, we may assume $sl(\tilde{w}_j)>_{slex} sl(\tilde{w}_i)$, 
		and we define $\Lambda_k=\Lambda_{k-1}\setminus \{\tilde{w_j}\}$.

	\end{enumerate}
	
	Now, at each step of the induction, when we obtain $\delta_{k-1}(h_k)\cap \Lambda_{k-1}=\{\tilde{w}_i, \tilde{w}_j\}$, we know that the relation $rel(h_k)$ can be written as $r^{\tilde{w}_j}$ equals to a conjugate of $r^{\tilde{w}_i}$. And using a Tietze transformation we may eliminate the relation and the generator $r^{\tilde{w}_j}$.
	
	Notice that by the construction of the family $\{\Lambda_k\}$, the word $\hat{w}_1$ obtained in the construction of $\Lambda_1$ verifies that $\hat{w}_1\in\Lambda_k$ for any $k\in\mathbb{N}$. Therefore, notice that $\cap_{k\in\mathbb{Z}} \Lambda_k \neq \emptyset$.
	In this way, after each inductive step we have a presentation of the group $K$ with:
	
	\begin{enumerate}
		\item Set of generators $\{r^{g}, g\in\Lambda_k\}$, bijective to $\Lambda_k$.
		
		\item Set of relators $\tilde{R}_k=\{rel(h_i)\mid i=k+1,...\}$, bijective to $\Omega\setminus \{h_1,...,h_k\}$.
	\end{enumerate}
	
	Thus, eventually we can remove every relation and we conserve a non-empty set of generators $\cap_{k\in\mathbb{Z}} \Lambda_k$. Therefore, the group is free.
	\end{proof}

And now, as an inmediate consequence we obtain our main theorem:

\begin{teo}\label{Mainthm}
	Any even Artin group based on a large graph is poly-free.
\end{teo}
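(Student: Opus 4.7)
The plan is to induct on the number of vertices $|V(\Gamma)|$ of the defining graph, using Proposition \ref{freeness} together with the semidirect product decomposition obtained in Section \ref{S3}.

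For the base case, when $|V(\Gamma)|=1$, the group $A_\Gamma$ is infinite cyclic, hence free and trivially poly-free (the tower $1 \unlhd \mathbb{Z}$ works). Now assume the result holds for all large even Artin groups whose defining graph has fewer than $n$ vertices, and let $\Gamma$ have $n$ vertices. Pick any vertex $r\in V(\Gamma)$; the induced subgraph $\Gamma\setminus\{r\}$ is again a large even graph with $n-1$ vertices, so by the inductive hypothesis $G_1 := A_{\Gamma\setminus\{r\}}$ is poly-free, with a tower
\begin{equation*}
1 = H_0 \unlhd H_1 \unlhd \cdots \unlhd H_N = G_1
\end{equation*}
whose successive quotients $H_{i+1}/H_i$ are free.

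Consider the retraction $\psi:A_\Gamma\to G_1$ defined in Section \ref{S3}; by Lemma \ref{action} and the discussion around it we have the split short exact sequence
\begin{equation*}
1 \longrightarrow K \longrightarrow A_\Gamma \xrightarrow{\psi} G_1 \longrightarrow 1,
\end{equation*}
and by Proposition \ref{freeness} the kernel $K$ is a free group. The plan is to pull the poly-free tower of $G_1$ back through $\psi$: set $\widetilde{H}_i := \psi^{-1}(H_i)$, so that $\widetilde{H}_0 = K$ and $\widetilde{H}_N = A_\Gamma$. Since preimages under a surjection preserve normality, each $\widetilde{H}_i$ is normal in $\widetilde{H}_{i+1}$ (and indeed in $A_\Gamma$ whenever $H_i\unlhd G_1$), and $\widetilde{H}_{i+1}/\widetilde{H}_i \cong H_{i+1}/H_i$ is free. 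Prepending the trivial group yields
\begin{equation*}
1 \unlhd K = \widetilde{H}_0 \unlhd \widetilde{H}_1 \unlhd \cdots \unlhd \widetilde{H}_N = A_\Gamma,
\end{equation*}
and since $K$ itself is free, every successive quotient of this refined tower is free, exhibiting $A_\Gamma$ as poly-free.

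The main work has already been carried out in Proposition \ref{freeness}; the theorem is essentially a clean inductive packaging of that result together with the fact that extensions of poly-free groups by free normal subgroups are poly-free. No further obstacle arises beyond checking that the pullback tower is indeed a chain of normal subgroups with free quotients, which is routine.
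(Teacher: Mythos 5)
Your proof is correct and is exactly the argument the paper intends: the paper's own proof of this theorem is a one-line appeal to Lemma \ref{action} and Proposition \ref{freeness}, leaving implicit precisely the induction on $|V(\Gamma)|$ and the pullback of the poly-free tower of $G_1$ through the split exact sequence that you spell out. No discrepancies.
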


\begin{proof}
	It follows immediately from Lemma \ref{action} and Proposition \ref{freeness}.
\end{proof}

 \section{Poly-freeness for even Artin groups based on triangle graphs}\label{S6}
 
Consider the case when our graph is a triangle. We can distinguish four different types of triangles according to the number of edges with label $2$:

\begin{enumerate}[(i)]
	\item $(2,2,2)$,
	
	\item $(2k_1,2,2)$,
	
	\item $(2k_1,2k_2,2),$
	
	\item $(2k_1,2k_2,2k_3)$.
\end{enumerate}     with $k_i\geq 2$.
 
 The Artin group associated to a triangle of type (i) is $\mathbb{Z}^3$, so it is poly-free. Artin groups associated to type (ii) triangles are even of type FC and so we know that it is also poly-free by \cite{Blasco1} (In fact, these groups are of the form $A_2(2k_1)\times \mathbb{Z}$ so they are obviously poly-free). And Artin groups associated to triangles of type (iv) are large even Artin groups and thus they are also poly-free by Theorem \ref{Mainthm}. The only remaining case are Artin groups associated to triangles of type (iii).

 \begin{figure}[ht]
 	
 	\begin{center}
 		
 		\begin{tikzpicture}[vertice/.style={draw,circle,fill,minimum size=0.3cm,inner sep=0}]
 		\node[vertice, label=right: $b_2$] (D1) at (1,.5) {};
 		\node[vertice, label=above: $b_1$] (E1) at (0,1.5) {};
 		\node[vertice, label=left: $r$] (F1) at (-1,.5) {};
 		\draw[very thick] (D1)--(F1)--(E1)--(D1);
 		\node[left] at (-0.5,1.15) {$2k_1$};
 		\node[right] at (0.5,1.15) {$2k_2$};
 		\node[below] at (0,.5) {$2$};
 		\end{tikzpicture}
 	\end{center} 	
 	
 \end{figure}

 The problem for this case is that as far as we know there are not known normal forms for the associated group. 
 
 However, notice that in every proof along the paper we have only used normal forms in the small subgroup $A_{\Gamma\setminus\{r\}}$, never in the big Artin group $A_{\Gamma}$. Thus, almost the exact same proof that we have used to prove poly-freeness for large even Artin groups works also for any even Artin group $\Gamma$ satisfying that there exists a vertex $r\in V(\Gamma)$ such that $A_{\Gamma\setminus\{r\}}$ is a large even Artin group.
 
 The only place where we need to change a bit our proof is in Lemma \ref{action} since Lemma \ref{Commute} is not true for this Artin group. In this way, in $G_1\ltimes K$ we could not have discarded the relations $[Stab(r),r]$. We are going to give a different proof for Lemma \ref{action} in this situation.

\begin{lem}\label{action2}
	
	Let $A_{\Gamma}=\langle r, b_1, b_2 \mid (rb_1)^{k_1}=(b_1r)^{k1}, rb_2=b_2r, (b_1b_2)^{k_2}=(b_2b_1)^{k_2}\rangle$ and $A_{\Gamma_1}=A_{\Gamma\setminus \{r\}}=\langle b_1, b_2 \mid (b_1b_2)^{k_2}=(b_2b_1)^{k_2}\rangle$. Let us consider the map: $$\psi: A_{\Gamma} \longrightarrow A_{\Gamma_1},$$ induced by $r\mapsto 1$ and $b_i \mapsto b_i$ for $i=1,2$.
	
	$\ker(\psi)$ is isomorphic to:
	$$K:= \langle r^{g}; g\in A_{\Gamma_1} \mid \hat{R}\rangle,$$ 
	where $\hat{R}$ is the set of relations defined in Definition \ref{Relaciones}.
\end{lem}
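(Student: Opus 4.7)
The plan is to follow the proof of Lemma \ref{action} as closely as possible, substituting a direct treatment of the stabilizer for the application of Lemma \ref{Commute}. First I would define the $G_1$-action on $K$ via $h^{-1}(r^g)h = r^{gh}$ and verify that it preserves the relations in $\hat{R}$; this verification is essentially identical to the one in the proof of Lemma \ref{action}, since it only uses the dihedral structure attached to the large edge $\{r,b_1\}$ and Lemma \ref{Lema 3}, both of which remain available here because $A_{\Gamma\setminus\{r\}}$ is a large even Artin group.

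Applying Lemma \ref{subgroup} then yields
\[
G_1 \ltimes K = \langle r, b_1, b_2 \mid \hat{C}_0,\ C,\ [Stab_{G_1}(r), r] \rangle,
\]
where $\hat{C}_0$ is the lift of $\hat{R}$ to Artin-style relations among $r$ and $b_1$, and $C = \{(b_1b_2)^{k_2}(b_2b_1)^{-k_2}\}$ is the single defining relation of $G_1$. In the original proof, Lemma \ref{Commute} let us conclude that $Stab_{G_1}(r) = \{e\}$ and so the last block of relations was trivial. Here, since $b_2$ commutes with $r$ in $A_\Gamma$, we must expect $b_2 \in Stab_{G_1}(r)$, and the relation $[b_2,r] = 1$ has to be retained in the presentation.

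The next step I would carry out is showing that, modulo $\hat{C}_0$ and $C$, the block $[Stab_{G_1}(r), r]$ is generated by the single commutation $[b_2,r] = 1$. This requires two ingredients. The first is that $b_2 \in Stab_{G_1}(r)$, which amounts to knowing that $r^{b_2 g} = r^g$ holds in $K$ for every $g \in G_1$; this is naturally secured by reading Definition \ref{Relaciones} as extended to the label-$2$ vertex $b_2$ (so that the commutation relations appear as degenerate instances of $\hat{R}$) or by augmenting $\hat{R}$ with these relations by hand. The second is the analog of Lemma \ref{Commute} for this setting, namely that the centralizer of $r$ inside $G_1 = A_2(2k_2)$ equals exactly $\langle b_2 \rangle$. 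With both ingredients in place, Tietze transformations mirroring the final step of the proof of Lemma \ref{action} collapse the presentation of $G_1 \ltimes K$ to the Artin presentation of $A_\Gamma$, yielding $G_1 \ltimes K \cong A_\Gamma$ and therefore $K \cong \ker(\psi)$.

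I expect the main obstacle to be the second ingredient: proving that the centralizer in $G_1$ of the element $r \in A_\Gamma$ is precisely $\langle b_2 \rangle$ and nothing more. This is the substantial new input replacing Lemma \ref{Commute}, and it will likely require a direct analysis of geodesic representatives in $A_\Gamma$ in the spirit of Section \ref{Section3}, or an appeal to known results on centralizers in Artin groups associated with triangle graphs.
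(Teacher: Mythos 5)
Your plan coincides with the paper's proof up to the point where the block $[Stab_{G_1}(r),r]$ must be dealt with, but the way you propose to handle that block has a genuine gap. You reduce the problem to showing that the centralizer of $r$ in $G_1=A_2(2k_2)$ is exactly $\langle b_2\rangle$, and you yourself flag this as the main obstacle, suggesting it would need ``a direct analysis of geodesic representatives in $A_\Gamma$.'' That is precisely the tool that is unavailable here: the paper states explicitly that no normal forms are known for the type-(iii) triangle group $A_\Gamma$ itself (all the machinery of Section \ref{Section3} lives in the \emph{small} group $A_{\Gamma\setminus\{r\}}$). So the ``second ingredient'' of your argument is not just unproven, it is exactly the kind of statement the whole section is structured to avoid. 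Moreover, an exact determination of the stabilizer is not actually needed.

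The paper's proof of Lemma \ref{action2} sidesteps the centralizer computation entirely. After reducing $\hat{C}_0$ to $C_0'$, one observes that $\{C_0',T\}$ is the defining set of relations of $A_\Gamma$, giving an epimorphism $A_\Gamma\twoheadrightarrow G_1\ltimes K$; it then suffices to check that every relation in $[Stab_{G_1}(r),r]$ already holds in $A_\Gamma$. This follows because, by construction, every relation of $\hat{R}$ is a consequence of the Artin relations of $A_\Gamma$ (each one is a rewriting of $(rb_i)^{k_i}=(b_ir)^{k_i}$ or of $rb_2=b_2r$ conjugated by $g$), so $r^g=_K r$ forces $g^{-1}rg=_{A_\Gamma}r$ for any $g\in Stab_{G_1}(r)$ --- whatever that stabilizer turns out to be. In other words, you only need the \emph{easy} inclusion (elements of the stabilizer commute with $r$ in $A_\Gamma$), never the hard reverse one. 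If you replace your centralizer step with this redundancy argument, the rest of your outline goes through as written.
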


\begin{proof}
	Note that $\ker(\psi)$ is the normal subgroup of $A_{\Gamma}$ generated by $r$. 
	 
	 Notice that in this case we have $p_1^+=2, p_1^-=0, n_1^-=-1, n_1^+=1, p_2^+=1, p_2^-=0, n_2^-=-1, n_2^+=0$. Hence, we have $\hat{R}=\hat{R}^+\cup \hat{R}^-$ with: $$R^+=\{r^{b_1^2g}=r^{b_1g}r^{g}\left(r^{b_1^{-1}g}\right)^{-1}; b_1^2g\in \Omega_1^+\}\cup\{r^{b_2g}=r^{g}; b_2g\in\Omega_2^+\}$$
	 
	 $$R^-=\{r^{b_1^{-1}g}=\left(r^{g}\right)^{-1}r^{b_1g}r^{g}; b_1^{-1}g\in \Omega_1^-\}\cup\{r^{b_2^{-1}g}=r^{g}; b_2^{-1}g\in\Omega_2^-\}$$
		As before, we define an action of $G_1=A_{\Gamma_1}=A_{\Gamma\setminus \{r\}}=\langle b_1, b_2 \mid (b_1b_2)^{k_2}=(b_2b_1)^{k_2}\rangle$ on the abstract group $K$ via
	$$h^{-1}(r^{g})h=r^{gh}, \hspace{0.3 cm} h\in G_1.$$
	The proof is exactly the same as in Lemma \ref{action} until we obtain the presentation of the semidirect product applying  Lemma \ref{subgroup}. In our case, we obtain
	$$G_1\ltimes K =\langle r, b_1, b_2 \mid \hat{C}_0, T, [Stab_{G_1}(r),r] \rangle.$$
	 where $\hat{C}_0=\hat{C}^+\cup \hat{C}^-$, $T=\{(b_1b2)^{k_2}=(b_2b_1^{k_2})\}$ with
	\begin{align*}
	&\hat{C}^{+}=\{(b_{1}^{2}g)^{-1}r (b_{1}^{2}g)=(b_{1}g)^{-1}r (b_{1}g)(g)^{-1} r (g)(b_{1}g)^{-1}r (b_{1}g))^{-1}; b_1^2g\in\Omega_1^+\}\cup \\ &\hspace{1.2 cm} \{(b_2g)^{-1}r(b_2g)=(g)^{-1}r(g); b_2g\in\Omega_2^{+} \} \nonumber\\
	&\hat{C}^-=\{(b_{1}^{-1}g)^{-1}r (b_{1}^{-1}g)=((g)^{-1}r (g))^{-1}(b_1g)^{-1}r (b_1g)(g)^{-1}r (g); b_1^{-1}g\in\Omega_1^{-}\}\cup\nonumber \\&\hspace{1.2 cm} \{(b_2^{-1}g)^{-1}r(b_2^{-1}g)=(g)^{-1}r(g); b_2^{-1}g\in\Omega_2^{-} \} \nonumber\\              
	\end{align*}

	We define:
	$$C'_0=\{b_1^{-2}rb_1^{2}=b_1^{-1}rb_1r(b_1^{-1}rb_1)^{-1},b_2^{-1}rb_2=r\}$$

	So in fact, the relations in $\hat{C}_0\setminus C'_{0}$ are obtained from the ones of $C'_{0}$ by conjugation. So we can eliminate them from the presentation using Tietze transformations. Thus, we have:
	$$G_1\ltimes K=\langle b_1, b_2 ,r\mid C'_0, T, [Stab_{G_1}(r),r]\rangle.$$
	
	Note that $\{C_0', T\}$ is in fact the set of relations in $A_{\Gamma}$. Therefore, we have an epimorphism $$A_{\Gamma} \twoheadrightarrow  G_1\ltimes K.$$
	
	To end the proof it is enough to see that the relations $[Stab_{G_1}(r),r]$ are also satisfied in $A_{\Gamma}$.
	
	Let us consider $g\in Stab_{G_1}(r)$, therefore by the definition of our action $r^{g}=_{K}r$. Now, take into account the following property of the set of relations $\hat{R}$. By construction, each time that we have a relation $\hat{R}^+$:
	$$r^{b_{i}^{p_i^+}g}=r^{b_{i}^{p_i^+-1}g}...r^{b_i^{p_i^-}g}...\left(r^{b_{i}^{p{i}-1}g}\right)^{-1},$$
	
	then in the original $A_{\Gamma}$ the same relation it is also satisfied. 
	
	Analogously, we have a similar situation for the relations of $\hat{R}^-$.
	
	 Therefore, $r^{g}=_K r$ implies that $r^g=_{A_{\Gamma}} r$, i.e. $g^{-1}rg=_{A_{\Gamma}} r$. Thus, $g^{-1}rg=_{A_{\Gamma}}r$ for every $g\in Stab_{G_1}(r)$.
	 
	  But, $[Stab_{G_1}(r),r]=\{g^{-1}rg=r \mid g\in Stab_{G_1}(r) \}$, so the relations $[Stab_{G_1}(r),r]$ are also satisfied in $A_{\Gamma}$.
	
Thus, we have:
$$G_1\ltimes K=\langle b_1, b_2, r\mid C'_1, T\rangle \simeq A_{\Gamma}$$ and the isomorphism maps $K$ onto $\ker(\psi)$.	Therefore, $K  :=\langle r^{sl(g)}; g\in G_1 \mid \hat{R}\rangle= \langle\langle r\rangle\rangle= \ker(\psi)$. 
\end{proof}
 
 \begin{obs}
 	Notice that this proof could have also been applied to the case of large Artin groups proved before. But we have preferred to present the proofs in this way because we think that Lemma \ref{Commute} has importance by itself.
 \end{obs}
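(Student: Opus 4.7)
The plan is to follow the scheme of the proof of Lemma \ref{action} up to the semidirect-product presentation and then replace its concluding step. As there, I would define the action of $G_1$ on the abstract group $K$ via $h^{-1}(r^g)h = r^{gh}$ and check that it preserves $\hat{R}$: if multiplying by a generator $h$ keeps $gh$ in $\Omega_i^+$ the relation is carried to another $\hat{R}^+$ relation, while otherwise Lemma \ref{Lema 3} together with Lemma \ref{numeritos} forces $gh$ into some $\Omega_j^-$, producing an $\hat{R}^-$ relation. Applying Lemma \ref{subgroup} would then give
$$G_1 \ltimes K = \langle r, b_1, b_2 \mid \hat{C}_0, T, [Stab_{G_1}(r), r]\rangle,$$
and the usual Tietze collapse reduces $\hat{C}_0$ to the two relations $C'_0$ coming from the Artin relations involving $r$.

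At this point the argument of Lemma \ref{action} diverges: Lemma \ref{Commute} is no longer available since $b_2$ commutes with $r$ in $A_\Gamma$, so $Stab_{G_1}(r)$ is nontrivial in an essential way. Rather than trying to kill these commutator relations, I would keep them in the presentation and observe that $\{C'_0, T\}$ is visibly the Artin presentation of $A_\Gamma$. This gives a natural epimorphism $\Phi: A_\Gamma \twoheadrightarrow G_1 \ltimes K$ that is the identity on generators, and the lemma reduces to showing that every commutator $[g, r]$ with $g \in Stab_{G_1}(r)$ is already trivial in $A_\Gamma$; this would make $\Phi$ an isomorphism and restrict to the desired $K \cong \ker(\psi)$.

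The main step, and the only real obstacle, is to verify this last claim. I would do it by constructing a homomorphism $\phi: K \to A_\Gamma$ that sends the abstract generator $r^g$ of $K$ to the element $g^{-1} r g$ of $A_\Gamma$ (using the inclusion of generators of $G_1$ into those of $A_\Gamma$). The well-definedness of $\phi$ reduces to the check that each relation of $\hat{R}$ is a consequence of the Artin relations in $A_\Gamma$, which is immediate from the derivation preceding Definition \ref{Relaciones}: the defining identities are nothing but the Artin relation $(rb_i)^{k_i} = (b_i r)^{k_i}$ rewritten as a palindromic expression and then conjugated by $g$, and both operations are valid inside $A_\Gamma$. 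Once $\phi$ is in place, the hypothesis $g \in Stab_{G_1}(r)$ reads $r^g =_K r$, and applying $\phi$ yields $g^{-1} r g =_{A_\Gamma} r$, closing the argument.
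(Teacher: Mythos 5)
Your proposal is correct and follows essentially the same route as the paper: Lemma \ref{action2} concludes precisely by observing that every relation in $\hat{R}$ already holds in $A_\Gamma$ (being a conjugated rewriting of the even Artin relation $(rb_i)^{k_i}=(b_ir)^{k_i}$), so that $r^g=_K r$ forces $g^{-1}rg=_{A_\Gamma}r$ and the relations $[Stab_{G_1}(r),r]$ are redundant. Your explicit homomorphism $\phi\colon K\to A_\Gamma$, $r^g\mapsto g^{-1}rg$, is just a cleaner packaging of that step, and since it nowhere uses the presence (or absence) of label-$2$ edges at $r$, it justifies the remark that the argument applies equally to the large even case.
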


As we have commented before, the rest of the proof for polyfreeness works in almost the same way as for large Artin groups. The proof of Proposition \ref{freeness} can be used also in this case just taking into account the following remarks:

\begin{itemize}
	\item The proof of Lemma \ref{obs2b} cannot be used in this case, but in our particular case we have $n_1^-=n_2^-=-1$, so we don't need to apply it.
	
	\item It is not necessary to use Remark \ref{FirstStepInduction} in order to check different cases. Since we are considering a particular case, we know that for this group $h_1=a^{-1}b^{-1}a^{-1}b^{-1}=_G b^{-1}a^{-1}b^{-1}a^{-1}$.
\end{itemize}

 Therefore:
 
 \begin{corol}
 	The Artin group based on the triangle graph $(2k_1,2k_2,2)$ is poly-free.
 \end{corol}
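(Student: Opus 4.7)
The approach is to mimic the proof of Theorem \ref{Mainthm} with $r$ chosen to be the vertex of the triangle that is incident to the edge of label $2$ (and hence also to the edge of label $2k_1$). With this choice, $A_{\Gamma_1}=A_{\Gamma\setminus\{r\}}$ is the dihedral Artin group $A_2(2k_2)$, which is itself a large even Artin group since $2k_2\geq 4$. So Theorem \ref{Mainthm} applies to $A_{\Gamma_1}$ and tells us it is poly-free, and more importantly the normal-form machinery of Sections \ref{S2}-\ref{Section3} is available inside $A_{\Gamma_1}$.

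The first step is to apply Lemma \ref{action2}, which was proved just before this corollary; it provides the split short exact sequence
\[
1\longrightarrow K\longrightarrow A_\Gamma\stackrel{\psi}{\longrightarrow}A_{\Gamma_1}\longrightarrow 1,
\]
where $K=\ker(\psi)$ admits the presentation with generators $\{r^g\mid g\in A_{\Gamma_1}\}$ and relations $\hat{R}$ of Definition \ref{Relaciones}. The splitting $A_{\Gamma_1}\hookrightarrow A_\Gamma$ is induced by $b_i\mapsto b_i$, so $A_\Gamma\cong A_{\Gamma_1}\ltimes K$.

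The second step is to show that the argument of Proposition \ref{freeness} still applies and yields that $K$ is a free group. As the author emphasizes, the proof of Proposition \ref{freeness} only uses normal forms and the technical lemmas inside $A_{\Gamma_1}$, never inside the full group $A_\Gamma$. Since here $A_{\Gamma_1}=A_2(2k_2)$ is large even, all the ingredients (shortlex representatives, $\tau$-moves, Lemmas \ref{Lema 7}-\ref{Lema 9}, Lemma \ref{Proposition 12} and the iterated construction of the sets $\Lambda_k$) are still at our disposal. Only two minor adjustments are required: Lemma \ref{obs2b} need not be invoked, because in this triangle one has $n_1^-=n_2^-=-1$ directly from Definition \ref{parametros}; and in the base case of the induction on the shortlex enumeration $h_1<h_2<\dots$ of $\Omega$, the element $h_1\in\Omega_1^\pm\cap\Omega_2^\pm$ is explicitly $h_1=b_1^{-1}b_2^{-1}b_1^{-1}b_2^{-1}=_G b_2^{-1}b_1^{-1}b_2^{-1}b_1^{-1}$, which removes the need for Remark \ref{FirstStepInduction}. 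The rest of the Tietze-transformation argument, including the use of Lemma \ref{DeltaDifferent} to guarantee that we always eliminate a distinct generator at each inductive step, goes through unchanged.

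Finally, having $K$ free and $A_{\Gamma_1}$ poly-free (by Theorem \ref{Mainthm}), we finish by pulling back a poly-free tower $1=G_0\unlhd G_1\unlhd\dots\unlhd G_N=A_{\Gamma_1}$ through $\psi$ to obtain
\[
1\unlhd K=\psi^{-1}(G_0)\unlhd \psi^{-1}(G_1)\unlhd\dots\unlhd\psi^{-1}(G_N)=A_\Gamma,
\]
whose successive quotients are $K$ (free) and the $G_{i+1}/G_i$ (free). The only point requiring real verification is the claim that Proposition \ref{freeness} genuinely never invokes a statement about normal forms in the ambient group $A_\Gamma$; this is a bookkeeping check through the proof of Proposition \ref{freeness}, and that is the main (and essentially the only) obstacle.
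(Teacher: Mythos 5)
Your proposal is correct and follows essentially the same route as the paper: delete the vertex $r$ incident to the label-$2$ edge so that $A_{\Gamma\setminus\{r\}}=A_2(2k_2)$ is large even, apply Lemma \ref{action2} in place of Lemma \ref{action}, rerun the Tietze-transformation argument of Proposition \ref{freeness} with the same two adjustments the paper notes (no need for Lemma \ref{obs2b} since $n_1^-=n_2^-=-1$, and $h_1$ is known explicitly), and conclude by extending a poly-free tower of $A_{\Gamma_1}$ by the free kernel $K$. The only caveat, which you yourself flag, is that verifying Proposition \ref{freeness} never uses normal forms in the ambient group is left as a bookkeeping check --- but the paper does exactly the same.
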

 
 \begin{corol}
 	Any even Artin group based on a triangle graph is poly-free.
 \end{corol}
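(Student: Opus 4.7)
The plan is to reduce the statement to a case analysis on the number of label-$2$ edges in the triangle graph $\Gamma$. Writing the three labels as $(2k_1, 2k_2, 2k_3)$ with $k_i\geq 1$, the four possibilities up to relabeling correspond to how many of the $k_i$ are equal to $1$, and each case has already been handled (or essentially handled) elsewhere in the excerpt. So the proof will amount to invoking the correct result in each case.

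First I would dispose of the two easy cases. If all three $k_i=1$, every pair of generators commutes, so $A_\Gamma\cong\mathbb{Z}^3$, which is trivially poly-free. If exactly two $k_i$ equal $1$, say $k_2=k_3=1$ and $k_1\geq 2$, then the generator $c$ opposite the non-commuting edge commutes with the other two, giving $A_\Gamma\cong A_2(2k_1)\times \langle c\rangle\cong A_2(2k_1)\times\mathbb{Z}$. This is an even Artin group of type FC (in fact a trivial one), so it is poly-free by \cite{Blasco1}; alternatively one can note directly that $A_2(2k_1)$ itself is poly-free (it surjects onto $\mathbb{Z}$ with free kernel, as is well known) and that the direct product with $\mathbb{Z}$ of a poly-free group is poly-free.

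Next I would invoke the two substantive results of the paper. If no $k_i$ equals $1$, i.e.\ every label is $\geq 4$, then $A_\Gamma$ is a large even Artin group, so poly-freeness follows from Theorem~\ref{Mainthm}. The remaining case, where exactly one $k_i$ equals $1$, is the type (iii) triangle $(2k_1,2k_2,2)$, and this is precisely the content of the previous corollary, proved using the adapted semidirect-product argument (Lemma~\ref{action2}) together with Proposition~\ref{freeness} applied to the parabolic subgroup $A_{\Gamma\setminus\{r\}}$, which is a large dihedral Artin group and hence admits the normal forms of Section~\ref{S2}.

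There is no new obstacle at this point: all the work has gone into the previous sections. The only subtlety to record is that the four cases exhaust all triangle graphs with even labels, so the corollary is simply the union of the four conclusions above.
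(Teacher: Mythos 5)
Your proof is correct and follows essentially the same route as the paper: the same four-way case split on the number of label-$2$ edges, with types (i) and (ii) dispatched as $\mathbb{Z}^3$ and $A_2(2k_1)\times\mathbb{Z}$, type (iv) by Theorem~\ref{Mainthm}, and type (iii) by the preceding corollary. No gaps.
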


\bibliographystyle{acm}
\bibliography{Poly-freeness_in_large_even_Artin_groups._R._Blasco}

\end{document}